\def\hlinewd#1{%
  \noalign{\ifnum0=`}\fi\hrule \@height #1 \futurelet
   \reserved@a\@xhline}
\title{Simple Algebraic Proofs of Uniqueness for Erd\H{o}s-Ko-Rado Theorems}
\author{
	Yuval Filmus\\
	\small Department of Computer Science\\[-0.8ex]
	\small Technion \\[-0.8ex] 
	\small\tt yuvalfi@cs.technion.ac.il
	
	\and
	
	Nathan Lindzey\\
	\small Department of Computer Science\\[-0.8ex]
	\small University of Colorado\\[-0.8ex] 
	\small\tt Nathan.Lindzey@colorado.edu
}
\begin{document}
\maketitle

\begin{abstract}
	We give simpler algebraic proofs of \emph{uniqueness} for several Erd\H{o}s-Ko-Rado results, i.e., that the canonically intersecting families are the only largest intersecting families. Using these techniques, we characterize the largest partially 2-intersecting families of perfect hypermatchings, resolving a recent conjecture of Meagher, Shirazi, and Stevens.
\end{abstract}

\section{Introduction}\label{sec:intro}

Let $\mathcal{X}$ be the collection of $k$-sets of an $n$-element set $[n] := \{1,2,\cdots,n\} =: \Sigma$. A family $\mathcal{F} \subseteq \mathcal{X}$ is \emph{intersecting} if $S \cap T \neq \emptyset$ for all $S,T \in \mathcal{F}$.  A family $\mathcal{F}$ is \emph{canonically intersecting} if there exists an $i \in \Sigma$ such that $\mathcal{F} = \mathcal{F}_i := \{ S \in \mathcal{X} : i \in S\}$. In \cite{ErdosKR61}, Erd\H{o}s, Ko, and Rado characterized the largest intersecting families of $\mathcal{X}$ for all $n$ and $k$ such that $k<n/2$.

\begin{theorem}[The Erd\H{o}s--Ko--Rado Theorem]\label{thm:EKR}
	Let $k < n/2$. If $\mathcal{F} \subseteq \mathcal{X}$ is intersecting, then
	\[
		|\mathcal{F}| \leq \binom{n-1}{k-1}.	
	\]
	Moreover, equality holds if and only if $\mathcal{F}$ is canonically intersecting.
\end{theorem}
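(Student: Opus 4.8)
My plan is a spectral (ratio-bound) argument on the Kneser graph $K(n,k)$, whose vertex set is $\mathcal{X}$ and whose edges join disjoint pairs, so that intersecting families are exactly its independent sets. This graph sits in the Johnson scheme, and its adjacency operator $A$ acts as the scalar $\lambda_j=(-1)^j\binom{n-k-j}{k-j}$ on the $j$-th eigenspace $U_j$, $j=0,\dots,k$. Because $k<n/2$, one has $\lambda_0=\binom{n-k}{k}>0$, and $\lambda_1=-\binom{n-k-1}{k-1}$ is \emph{strictly} the smallest eigenvalue (this strictness is exactly what fails when $n=2k$). Writing $\mathbf 1_{\mathcal F}=\sum_j f_j$ with $f_j\in U_j$, independence of $\mathcal F$ gives $0=\langle A\mathbf 1_{\mathcal F},\mathbf 1_{\mathcal F}\rangle=\sum_j\lambda_j\|f_j\|^2\ge\lambda_0\|f_0\|^2+\lambda_1\bigl(|\mathcal F|-\|f_0\|^2\bigr)$; since $\|f_0\|^2=|\mathcal F|^2\big/\binom nk$ this rearranges to $|\mathcal F|\le\frac{-\lambda_1}{\lambda_0-\lambda_1}\binom nk$, and a short binomial simplification turns the right-hand side into $\binom{n-1}{k-1}$.

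For uniqueness, equality throughout the chain forces $f_j=0$ for every $j\ge2$ (there $\lambda_j>\lambda_1$), i.e. $\mathbf 1_{\mathcal F}\in U_0\oplus U_1$. A one-line computation with $A$ shows $U_0\oplus U_1$ is exactly the space of functions $S\mapsto\sum_{i\in S}a_i$ with $a\in\mathbb R^n$ (equivalently, the span of the star indicators $\mathbf 1_{\mathcal F_1},\dots,\mathbf 1_{\mathcal F_n}$, whose Gram matrix has the form $cI+dJ$ and is nonsingular). So there is $a\in\mathbb R^n$ with $\mathbf 1_{\mathcal F}(S)=\sum_{i\in S}a_i$ for all $S$, and the whole problem reduces to showing $a$ is a standard basis vector.

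To pin down $a$ I would combine two inputs. First, since $\mathbf 1_{\mathcal F}$ is $\{0,1\}$-valued we have $\sum_{S}\mathbf 1_{\mathcal F}(S)^m=|\mathcal F|$ for every $m\ge1$; expanding $(\sum_{i\in S}a_i)^m$ and summing over $S$ writes $|\mathcal F|$ as $m!\binom{n-m}{k-m}\,e_m(a)$ plus an expression in $e_1(a),\dots,e_{m-1}(a)$, and since the same identity must hold for $a=e_1$ (a largest family, with $e_{\ge2}(e_1)=0$), induction on $m\le k$ forces $e_m(a)=0$, equivalently $p_m(a):=\sum_i a_i^m=1$ for $1\le m\le k$. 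Second, $\langle\mathbf 1_{\mathcal F},\mathbf 1_{\mathcal F_i}\rangle$ equals $\deg_i(\mathcal F)\in\mathbb Z_{\ge0}$, and evaluating the left side gives $a_i=\bigl(\deg_i(\mathcal F)-\binom{n-2}{k-2}\bigr)\big/\binom{n-2}{k-1}$, which lies in $\bigl[-\tfrac{k-1}{n-k},\,1\bigr]$ because $0\le\deg_i(\mathcal F)\le|\mathcal F|=\binom{n-1}{k-1}$ — and the lower endpoint is $>-1$ precisely because $n>2k$. Now (for $k\ge3$) apply the moment identities to the polynomial $P(t)=t^2(1-t)$ of degree $3\le k$: since $\sum_i P(a_i)=p_2-p_3=0=P(1)$ while $P\ge0$ on $\bigl[-\tfrac{k-1}{n-k},\,1\bigr]$ with zeros only at $0$ and $1$, every $a_i\in\{0,1\}$, and then $p_1=1$ forces exactly one $a_i=1$; hence $\mathcal F=\mathcal F_i$. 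The cases $k=1$ (immediate) and $k=2$ (where only $p_1,p_2$ are available, but the constraints $a_i+a_j\in\{0,1\}$ together with $p_1=p_2=1$ and $n>2k$ already pin $a$ down) are disposed of directly.

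I expect the genuine obstacle to be this last, rigidity step rather than the bound itself. The spectral argument only places $\mathbf 1_{\mathcal F}$ in an $n$-dimensional space that already contains several $\{0,1\}$-valued vectors of the wrong size (e.g. indicators of $\emptyset$, of $\mathcal X$, or of ``all $k$-sets missing a fixed point''), so one genuinely needs the extremality $|\mathcal F|=\binom{n-1}{k-1}$ — fed in here through $p_1=1$ and the higher power sums — and the strict inequality $n>2k$ (used both to make the moment recursion nondegenerate and to force $a_i>-1$) in order to eliminate every non-star solution. The content of the proof is packaging that extraction as cleanly as possible; moment identities plus one nonnegative polynomial seem to be about as light as it gets.
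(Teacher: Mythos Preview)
Your ratio-bound derivation of the upper bound and the conclusion $\mathbf 1_{\mathcal F}\in U_0\oplus U_1$ match the paper exactly. For uniqueness, however, you take a genuinely different route. The paper's argument is a ``local perturbation'': pick $S_1\in\mathcal F$ and $S_0\notin\mathcal F$ with $|S_1\triangle S_0|=2$, use the explicit form of $E_1$ (their Proposition~\ref{prop:johnsonE1}) to read off $|\mathcal F\!\downarrow_i|-|\mathcal F\!\downarrow_j|=\binom{n-2}{k-1}$ for the swapped elements $i,j$, and then re-invoke the intersecting hypothesis (on $\binom{[n]-j}{k}$) to force $|\mathcal F\!\downarrow_i|=\binom{n-1}{k-1}$. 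Your plan instead characterises $\mathbf 1_{\mathcal F}$ as a Boolean affine function via moment identities and a nonnegative-polynomial trick. This buys you a slightly stronger statement --- you never use intersection again after the ratio bound, so you are really classifying Boolean degree-$1$ functions of the right weight --- whereas the paper's perturbation method is the one that ports cleanly to the other domains treated there (subspaces, words, matchings, etc.), which is its whole point.

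One technical caution: your claim that $\sum_S(\sum_{i\in S}a_i)^m$ equals $m!\binom{n-m}{k-m}e_m(a)$ plus a polynomial in $e_1,\dots,e_{m-1}$ is not literally true (already for $m=2$ the $e$-expansion is $\binom{n-1}{k-1}e_1^2-2\binom{n-2}{k-1}e_2$, and the lower-order monomial pieces themselves contribute to $e_m$ when rewritten). What is true, and what your induction actually needs, is that after substituting $e_1=1,\,e_2=\cdots=e_{m-1}=0$ the $m$-th moment equation becomes affine in $e_m$ with nonzero leading coefficient; this holds for $m=2,3$ precisely when $n>2k$ (e.g.\ the coefficient of $e_3$ works out to $3\bigl[\binom{n-3}{k-1}-\binom{n-3}{k-2}\bigr]$), and since your final polynomial $P(t)=t^2(1-t)$ only needs $p_2=p_3$, this suffices. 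So the plan goes through once that coefficient is computed correctly.
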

\noindent We note for $k > n/2$ that any family is intersecting and for $k  = n/2$ that the characterization of the extremal families above does not hold.

There are many proofs of the Erd\H{o}s--Ko--Rado theorem, and it has since been generalized to a variety of combinatorial domains (e.g., words~\cite{Moon82}, subspaces~\cite{FranklW86}, permutations~\cite{CameronK03}). These results are collectively known as \emph{Erd\H{o}s-Ko-Rado (EKR) combinatorics}. Algebraic techniques have played a distinguished role in EKR combinatorics, so much so that a textbook has been written on the subject~\cite{GodsilMeagher}. We overview the modus operandi of such proofs.

Typically one starts by constructing a graph $\Gamma = (\mathcal{X},E)$ on a domain of objects $\mathcal{X}$ defined such that $xy \in E$ if $x$ and $y$ do not intersect. Recall that a set $S \subseteq \mathcal{X}$ is \emph{independent} if $xy \notin E$ for all $x,y \in S$. By design, the independent sets of $\Gamma$ are intersecting families; therefore, to deduce an EKR result for intersecting families of $\mathcal{X}$, it suffices to characterize the maximum independent sets of $\Gamma$. At first it is not clear how useful this graphical reformulation is until one considers the Delsarte--Hoffman \emph{ratio bound}, stated below. Recall that a \emph{pseudo-adjacency matrix} $A$ of a regular graph $G = (V,E)$ is a $|V| \times |V|$ matrix with constant row sum such that $ij \notin E \Rightarrow A_{ij} = 0$.
\begin{theorem}[Ratio Bound]
Let $A$ be a pseudo-adjacency matrix of a regular graph $G = (V,E)$ with eigenvalues $\lambda_{\max} \geq  \cdots \geq \lambda_{\min}$. If $S \subseteq V$ is independent, then
\[
|S| \leq  |V| \frac{-\lambda_{\min}}{\lambda_{\max} - \lambda_{\min}}.
\]
If equality holds, then the characteristic vector $1_S \in \mathbb{R}^V$ of $S$ lies in the span of the greatest and least eigenspace of $A$.
\end{theorem}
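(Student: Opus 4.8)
The plan is to run the classical variational argument. Write $n = |V|$, $s = |S|$, and let $x = 1_S \in \mathbb{R}^V$ be the characteristic vector of the independent set $S$. Since $A$ is (real) symmetric, I would fix an orthonormal eigenbasis $v_1,\dots,v_n$ with $Av_i = \mu_i v_i$, normalized so that $v_1 = \mathbf{1}/\sqrt{n}$: because $A$ has constant row sum, the all-ones vector $\mathbf 1$ is an eigenvector, and its eigenvalue is $\mu_1 = \lambda_{\max}$ (this is implicit in the standard definition of a pseudo-adjacency matrix, and is exactly what makes the argument produce the claimed bound).

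The one substantive observation is that $x^{\mathsf T} A x = 0$. Indeed $x^{\mathsf T} A x = \sum_{i,j \in S} A_{ij}$, and every such term vanishes because $\{i,j\}\notin E$ for $i,j\in S$ (including $i=j$, as $A$ has zero diagonal). Expanding $x = \sum_i c_i v_i$, I get $c_1 = \langle x, v_1\rangle = s/\sqrt n$, hence $c_1^2 = s^2/n$; also $\sum_i c_i^2 = \|x\|^2 = s$; and $0 = x^{\mathsf T}Ax = \sum_i \mu_i c_i^2$. Using $\mu_i \ge \lambda_{\min}$ for $i \ge 2$ then gives
\[
0 \;=\; \lambda_{\max}c_1^2 + \sum_{i\ge 2}\mu_i c_i^2 \;\ge\; \lambda_{\max}\frac{s^2}{n} + \lambda_{\min}\Bigl(s - \frac{s^2}{n}\Bigr).
\]
Assuming $s>0$ (otherwise the bound is trivial) and dividing by $s$ yields $0 \ge \tfrac{s}{n}(\lambda_{\max}-\lambda_{\min}) + \lambda_{\min}$; since $\lambda_{\max}-\lambda_{\min}>0$ (any graph with an edge has $\lambda_{\min}<0<\lambda_{\max}$), rearranging gives $s \le n\,\frac{-\lambda_{\min}}{\lambda_{\max}-\lambda_{\min}}$.

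For the equality clause, observe that the only inequality invoked was $\sum_{i\ge 2}(\mu_i - \lambda_{\min})c_i^2 \ge 0$. If $|S|$ attains the bound, this sum must be $0$, and since each summand is nonnegative, $c_i = 0$ whenever $\mu_i \ne \lambda_{\min}$ (for $i \ge 2$). Thus $x \in \operatorname{span}\{v_1\} + \ker(A - \lambda_{\min}I)$, which lies inside the span of the $\lambda_{\max}$-eigenspace and the $\lambda_{\min}$-eigenspace, as claimed.

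Every step here is a one-line computation, so I do not anticipate a real obstacle; the only points requiring a little care are the standing conventions (that $A$ may be taken symmetric, and that $\mathbf 1$ realizes $\lambda_{\max}$), the nondegeneracy $\lambda_{\max} > \lambda_{\min}$ needed before dividing, and the small amount of bookkeeping in the equality case.
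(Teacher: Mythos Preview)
The paper does not actually prove the Ratio Bound; it is stated as a known tool (the Delsarte--Hoffman bound) and invoked throughout without proof. Your argument is the standard variational proof of this result and is correct as written, including the equality analysis; the caveats you flag (that $A$ is taken symmetric, that the constant row sum realizes $\lambda_{\max}$, and that $\lambda_{\max}>\lambda_{\min}$) are exactly the implicit hypotheses under which the bound is always quoted.
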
 
\noindent Let $\Sigma$ be the universe of atoms that compose the members of $\mathcal{X}$ so that $x \subseteq \Sigma$ for all $x \in \mathcal{X}$. Big intersecting families $\mathcal{F}_e \subseteq \mathcal{X}$ can be constructed by taking all $x \in \mathcal{X}$ that contain some fixed atom $e \in \Sigma$ so that any two members of the family intersect at $e$. These are the so-called \emph{canonically intersecting families}, also known as \emph{trivially intersecting families} or \emph{stars}.
Remarkably, it is often the case that $\Gamma$ admits a pseudo-adjacency matrix with eigenvalues such that the ratio bound equals the size of a canonically intersecting family, thus proving that the canonically intersecting families are indeed maximum intersecting families.

This is a powerful technique for proving the \emph{bound} of an EKR result, but in practice, often the most difficult step of these algebraic EKR proofs is not showing that the ratio bound is tight, but rather leveraging the spectral consequences of the case of equality in the ratio bound to show \emph{uniqueness} --- that the canonically intersecting families are the \emph{only} maximum intersecting families (see~\cite[pg. 22]{GodsilMeagher}, for example). We are aware of essentially three algebraic methods for showing uniqueness: the \emph{rank}, \emph{polyhedral}, and \emph{width} methods. 

The rank method was first introduced by Godsil and Newman~\cite{NewmanPhD}, then further developed by Godsil and Meagher~\cite{GodsilM09}. The method proceeds by considering a binary matrix $M$ whose columns are the characteristic vectors of the canonically intersecting families. One first shows that the column space of $M$ spans the greatest and least eigenspaces of $A$, i.e., that any maximum intersecting family is a linear combination of canonically intersecting families. Next, one identifies a suitable submatrix $M'$ whose columns form a basis for the column space of $M$. The last and most difficult step is showing that the only maximum intersecting families spanned by $M'$ are the columns of $M'$ itself. The latter often requires laborious calculations involving ranks of certain submatrices of $M'$ and ad-hoc arguments concerning the 0/1 structure of such submatrices. This technique is quite versatile in that it has been shown to work not only in combinatorial settings but also algebraic ones such as $PGL(n,q)$~\cite{Spiga19}. The trade off here is that the method seems to require lots of case analyses and ad-hoc arguments that result in longer more complicated proofs. 

Note that the set of rows of $M$, denoted as $rows(M)$, are the characteristic vectors $1_x \in \mathbb{R}^{\Sigma}$ for all $x \in \mathcal{X}$ where $\Sigma$ is the universe of atoms that compose the elements of $V$. Later, both Godsil~\cite{GodsilMeagher, RooneyPhD} and Ellis et al.~\cite{EllisFP11} independently observed that the polyhedral structure of the convex hull of $rows(M)$ places more constraints on the structure of maximum intersecting families, leading to simpler arguments than those employed in the rank method. The disadvantage of this approach is that it is typically too much to ask for an explicit defining system of the convex hull of a set of 0/1 vectors. Such questions flirt with NP-completeness and have led to flawed proofs of EKR-type results (see~\cite{Filmus17}). On the other hand, when a good defining system is known, the polyhedral method gives more concise and elegant proofs of uniqueness, albeit at the expense of passing to polyhedral theory.

The efficacy of these methods is well-known (see~\cite{GodsilMeagher}); however, they are all a bit indirect, as they employ geometric or ad-hoc techniques that are not spectral per se. Alternatively, one can deduce uniqueness by solving the harder problem of characterizing the subsets of cometric association schemes that have \emph{dual width} 1~(see \cite[Ch. 8.6]{GodsilMeagher} for more details). This is a purely spectral approach, but such proofs are substantially more involved than the previous methods and are less general insofar that they only apply to cometric association schemes. Ideally, one seeks a simple spectral proof technique for showing uniqueness that just depends on eigenvalues and eigenvectors, in the spirit of the proof of the bound.

We show that the largest intersecting families can indeed be characterized using a simple spectral technique. It is based off the trivial observation that there is always a member of an intersecting family that can be perturbed to lie outside the family, and that one can often choose this perturbation to be sufficiently local for combinatorial objects. This general principle underlies recent work on \emph{stability} versions of asymptotic EKR-type results (see~\cite{EllisKL16,Ellis11}, for example), but here we show that this principle can in fact lead to succinct proofs of uniqueness in the \emph{exact} regime. 

To demonstrate the versatility of this method, we give simpler proofs of uniqueness for many of the well-known EKR theorems cataloged in~\cite{GodsilMeagher}, and we also use it to prove the main conjecture of Meagher et al.~\cite{MeagherPartial} in Section~\ref{sec:hypermatchings}. The algebraic framework we use for describing the majority of these results is the \emph{theory of association schemes}, which we overview in Section~\ref{sec:prelims}; however, in Section~\ref{sec:hypermatchings} we venture beyond association schemes, which will assume a more technical representation-theoretic framework. For the latter, we assume familiarity with finite group representation theory, especially that of the symmetric group (see~\cite{CST,MacDonald95} for a detailed treatment of the relevant theory).

\section{Preliminaries}\label{sec:prelims}

Let $\mathcal{X}$ be a domain over a universe $\Sigma$ of atoms, so that $x \subseteq \Sigma$ for all $x \in \mathcal{X}$. 
\begin{definition}[Association Scheme]\label{def:assoc} Let $\mathcal{X}$ be a domain. An \emph{association scheme} over $\mathcal{X}$ is a set $\mathcal{A} = \{A_0,A_1,\dots,A_d\}$ of binary $|\mathcal{X}| \times |\mathcal{X}|$ matrices that satisfy the following axioms:
	\begin{enumerate}
		\item $A_0 = I$,
		\item $\sum_{i=0}^d A_i = J$ where $J$ is the all-ones matrix,
		\item $A_i^\top \in \mathcal{A}$ for each $i$,
		\item $A_iA_j = A_jA_i \in \mathbb{C}\text{-Span}(\mathcal{A}) =: \mathfrak{A}$
	\end{enumerate}
	where $\mathfrak{A}$ is the \emph{Bose-Mesner algebra} generated by the basis $\mathcal{A}$. 
\end{definition}
The non-identity elements of $\mathcal{A}$ are called \emph{associates}. If $A_i^\top = A_i$ for all $i$, then we say that $\mathcal{A}$ is a \emph{symmetric} association scheme. All the association schemes considered in this work are symmetric. Each associate $A_i \in \mathcal{A}$ can be viewed as the adjacency matrix of a regular graph, so we will often conflate the two objects. We let $v_i$ denote the \emph{valency} of $A_i$ as a regular graph, i.e., its degree.  If the graph of each associate is (strongly) connected, then the association scheme is \emph{primitive}. All the association schemes considered in this work are primitive. We say two elements $x,y \in \mathcal{X}$ are \emph{i-related} if $(A_i)_{x,y} = 1$. 

Since $\mathfrak{A}$ is a commutative matrix algebra over $\mathbb{C}$, it admits a dual basis of \emph{primitive matrix idempotents} $E_0,E_1,\dots,E_d$ such that $E_iE_j = \delta_{i,j}E_i$ and  $\sum_{i=0}^d E_i = I$. Thus we have
\begin{align}\label{eq:relations}
A_i = \sum_{j=0}^{d}P_i(j)E_j \quad \text{ and } \quad E_i = \frac{1}{|\mathcal{X}|} \sum_{j=0}^{d}Q_i(j)A_j
\end{align}
where $P_i(j)$ denotes the \emph{$j$-th eigenvalue} of the $i$-th associate and $Q_i(j)$ denotes the \emph{$j$-th dual eigenvalue} of the $i$-th idempotent. It is clear that each $E_i$ is simply the orthogonal projector onto the eigenspace $V_i$ for all $0 \leq i \leq d$. Let $m_i := \text{Tr}~E_i = \dim V_i$ denote the $i$-th \emph{multiplicity}, i.e., the dimension of $V_i$. The $(d+1) \times (d+1)$ change of basis matrices $P := (P_{j,i}) = (P_{i}(j))$, $Q := (Q_{j,i}) = (Q_{i}(j))$ between the primal and dual bases of $\mathfrak{A}$ are called the \emph{character table} and \emph{dual character table}, respectively.  

There are a number of different ways to order the eigenspaces $V_i$ of $\mathfrak{A}$ (each of which induces a different ordering of the character tables), but the most natural orderings are those consistent with the polynomial structure of functions over the domain. Here, we consider functions $f \in \mathbb{C}^{\mathcal{X}}$ as polynomials in the indeterminants $\bar{x} := \{x_e\}_{e \in \Sigma}$, and the \emph{degree} of $f$ is defined to be the least degree of any polynomial $p_f(\bar{x})$ that \emph{represents} $f$, i.e., $f(x) = p_f(x)$ for all $x \in \mathcal{X}$ (see~\cite{DafniFLLV21} for more details). For many domains, such orderings take $V_0$ to be the space of degree-0 functions, $V_1$ to be the space of pure degree-1 functions, and so on.  The canonically intersecting families $1_{\mathcal{F}_e} \equiv x_e$ are degree-1 functions, and a unifying theme in EKR combinatorics is that they span the space $V_0 \oplus V_1$ of \emph{linear functions} of $\mathcal{X}$. 

In light of this, the orthogonal projection $E_0 + E_1$ onto the space of linear functions plays a central role in this work. We have $E_0 := J/|\mathcal{X}|$ for any association scheme, and an expression for $E_1$ can be obtained by determining the dual eigenvalues $\{Q_1(j)\}_{j=0}^d$ of $E_1$. Because $Q$ is essentially the inverse of $P$, expressions for the dual eigenvalues can be obtained provided we know the primal eigenvalues, i.e., $Q_j(i) = m_jP_{i}(j)/v_i$. 
Fortunately, the eigenvalues of association schemes often coincide with coefficients of orthogonal polynomials that admit well-known explicit expressions (see~\cite{BannaiI84,Delsarte73}, for example). Moreover, when the domain of the association scheme is a finite group $G$, the associates of the \emph{conjugacy class scheme} of $G$ correspond to the conjugacy classes of $G$ and its character table $P$ is essentially the character table of $G$. This allows one to determine $P$ via the character theory of $G$ (see~\cite{GodsilMeagher}).

Because the character theory of association schemes is so well-understood, writing down a simplified expression for the entries of $E_1$ requires nothing more than basic algebra. The simplicity of these expressions for $E_1$ will in turn allow for simple combinatorial expressions for the entries of $E_1f$ for any 0/1 vector $f$, which will be central to our proofs. In Section~\ref{sec:subsets} we carry out these calculations for $E_1$ of the Johnson scheme, but we omit them from the other sections, as they are entirely analogous. We refer the reader to~\cite{GodsilMeagher} for more details on association schemes and their application to EKR combinatorics.

We conclude this section with some notation that will be useful throughout this work. For any $\mathcal{F} \subseteq \mathcal{X}$, let $1_\mathcal{F} \in \mathbb{R}^{\mathcal{X}}$ be \emph{the characteristic vector} of $\mathcal{F}$, i.e., $(1_\mathcal{F})_x = 1$ if $x \in \mathcal{F}$, 0 otherwise. For any set $X$ and atom $e$, we use the notation $X + e := X \cup \{e\}$ and $X - e := X \setminus \{e\}$. Given $\mathcal{F} \subseteq \mathcal{X}$, for any $e \in \Sigma$, we define the \emph{e-restriction} $\mathcal{F}\!\!\downarrow_{e}$ to be the members of $\mathcal{F}$ that contain $e$, i.e.,
\[
\mathcal{F}\!\! \downarrow_{e} := \{ x \in \mathcal{F} : e \in x\} \quad \text{for all } e \in \Sigma.
\]
For a given domain $\mathcal{X}$ and universe $\Sigma$, we say that $\mathcal{F}$ is \emph{canonically intersecting} if 
\[
	\mathcal{F} = \mathcal{F}_e := \mathcal{X} \!\! \downarrow_e = \{ x \in \mathcal{X} : e \in x\}  \quad \text{for some } e \in \Sigma.
\] 
\section{Subsets}\label{sec:subsets}

Let $\mathcal{X}$ be the collection of $k$-element subsets of $\Sigma = [n]$. The \emph{Johnson scheme} $\mathcal{J}(n,k)$ is defined over $\mathcal{X}$ such that $S,T \in \mathcal{X}$ are $i$-related if $|S \cap T| = k - i$.
\begin{proposition}\label{prop:johnson}\emph{\cite{Delsarte73}} Let $P$ be the character table of $\mathcal{J}(n,k)$. For all $0 \leq i,j \leq k$, we have
	\[
		P_i(j) = \sum_{r = 0}^i (-1)^{r} \binom{j}{r} \binom{k-j}{i-r} \binom{n-k-j}{i - r} =   \sum_{r = i}^k (-1)^{r-i+j} \binom{r}{i} \binom{n-2r}{k-r} \binom{n-r-j}{r - j}.
	\]
\end{proposition}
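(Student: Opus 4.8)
The plan is to identify $P_i(j)$ as the eigenvalue by which the associate $A_i$ acts on the $j$-th common eigenspace $V_j$ of $\mathcal{J}(n,k)$, and then to read off this eigenvalue by applying $A_i$ to one well-chosen vector of $V_j$. We use the standard fact (see \cite{Delsarte73} or \cite{GodsilMeagher}) that, in the natural ordering and assuming $k\le n/2$, the space $V_0\oplus\cdots\oplus V_j$ is precisely the space of functions on $\mathcal{X}$ of degree at most $j$. Fix $2j$ distinct atoms $a_1,b_1,\dots,a_j,b_j\in\Sigma$ and put $v:=\prod_{\ell=1}^{j}(x_{a_\ell}-x_{b_\ell})\in\mathbb{R}^{\mathcal{X}}$, where $x_e$ denotes the characteristic vector of $\{x\in\mathcal{X}:e\in x\}$. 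Then $v$ is a nonzero polynomial of degree $j$, so $v\in V_0\oplus\cdots\oplus V_j$; moreover $\langle v,m\rangle=0$ for every monomial $m$ of degree less than $j$, because by pigeonhole some pair $\{a_\ell,b_\ell\}$ avoids $\supp(m)$, and the transposition $(a_\ell\,b_\ell)$ fixes both the uniform measure on $\mathcal{X}$ and $m$ while negating $v$. Since monomials of degree $<j$ span $V_0\oplus\cdots\oplus V_{j-1}$, it follows that $v\in V_j$, hence $A_iv=P_i(j)\,v$.

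For the first expression, choose $S\in\mathcal{X}$ with $\{a_1,\dots,a_j\}\subseteq S$ and $\{b_1,\dots,b_j\}\cap S=\emptyset$; then $v(S)=1$, so $P_i(j)=(A_iv)(S)=\sum_{|T\cap S|=k-i}v(T)$. The plan is to classify the $T\in\mathcal{X}$ with $|T\cap S|=k-i$ first by $L:=\{\ell:a_\ell\in T\}$ — a nonzero term forces $T$ to contain exactly one of $a_\ell,b_\ell$ for each $\ell$, contributing the sign $(-1)^{j-|L|}$ — and then by how $T$ splits between $S\setminus\{a_1,\dots,a_j\}$ (of size $k-j$) and the remaining $n-k-j$ atoms; the equation $|T\cap S|=k-i$ pins down the two part sizes in terms of $|L|$, yielding a product of two binomial coefficients. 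Summing over $L$ and reindexing (absorbing $\binom{j}{|L|}$ and using $\binom{k-j}{k-i-j+r}=\binom{k-j}{i-r}$) produces $\sum_r(-1)^r\binom{j}{r}\binom{k-j}{i-r}\binom{n-k-j}{i-r}$, which is the first formula up to harmless changes of the summation range over identically vanishing terms.

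For the second expression I would keep the same $v$ and $S$ but rewrite the indicator of $|S\setminus T|=i$ by inclusion--exclusion as $\sum_{r\ge i}(-1)^{r-i}\binom{r}{i}\binom{|S\setminus T|}{r}$, and observe that $\binom{|S\setminus T|}{r}$ counts the $r$-subsets $U\subseteq S$ disjoint from $T$. Exchanging sums gives $P_i(j)=\sum_{r=i}^{k}(-1)^{r-i}\binom{r}{i}\sum_{U}\bigl(\sum_{T\cap U=\emptyset}v(T)\bigr)$, where $U$ runs over the $r$-subsets of $S$. The key computation is the innermost sum for fixed $U$: if $a_\ell\in U$ then, since any admissible $T$ misses $U$, the $\ell$-th factor of $v$ reduces to $-x_{b_\ell}$ on $T$; and since $U\subseteq S$ contains no $b_\ell$, summing $v(T)$ over all $k$-sets $T$ disjoint from $U$ contributes a factor $(1-1)$ for every pair $\{a_\ell,b_\ell\}$ not met by $U$, so the sum vanishes unless $U\supseteq\{a_1,\dots,a_j\}$, in which case it equals $(-1)^j\binom{n-r-j}{k-j}$. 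As exactly $\binom{k-j}{r-j}$ of the $r$-subsets $U\subseteq S$ contain $\{a_1,\dots,a_j\}$, we obtain $P_i(j)=\sum_{r=i}^{k}(-1)^{r-i+j}\binom{r}{i}\binom{k-j}{r-j}\binom{n-r-j}{k-j}$, and the trinomial revision identity $\binom{a}{b}\binom{b}{c}=\binom{a}{c}\binom{a-c}{b-c}$ with $(a,b,c)=(n-r-j,\,k-j,\,r-j)$ converts this into the stated form.

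The two non-mechanical ingredients, and hence the main obstacles, are: (i) the appeal to the degree description of the eigenspaces of $\mathcal{J}(n,k)$, which is what justifies $v\in V_j$ (and which could alternatively be extracted from the $S_n$-module structure of $\mathbb{R}^{\mathcal{X}}$); and (ii) the collapse of $\sum_{T\cap U=\emptyset}v(T)$ in the derivation of the second formula. Everything else reduces to routine manipulation of binomial coefficients.
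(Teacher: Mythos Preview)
The paper does not supply its own proof of this proposition; it is quoted with a citation to Delsarte's thesis and used as a black box throughout Section~3. So there is no ``paper's proof'' to compare against.

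Your argument is correct and self-contained. The construction of the test vector $v=\prod_\ell(x_{a_\ell}-x_{b_\ell})$ together with the transposition-symmetry argument cleanly places $v$ in $V_j$, and the two evaluations of $(A_iv)(S)$ are both accurate: the direct count gives the Eberlein form $\sum_r(-1)^r\binom{j}{r}\binom{k-j}{i-r}\binom{n-k-j}{i-r}$, while the inclusion--exclusion rewriting of $[\,|S\setminus T|=i\,]$, the collapse of $\sum_{T\cap U=\emptyset}v(T)$ via the same transposition trick, and the final trinomial revision yield the second form. The only external input you invoke is the identification of $V_0\oplus\cdots\oplus V_j$ with the degree-$\le j$ functions, which is exactly the standard fact the paper itself relies on (and attributes to the same sources). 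Compared with Delsarte's original derivation via orthogonal polynomials and recurrence relations, your approach is more combinatorial and arguably more transparent, at the cost of importing the degree description of the eigenspaces rather than deriving the spectral decomposition from scratch.
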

\begin{proposition}\label{prop:johnsonE1} Let $\mathcal{F} \subseteq \mathcal{X}$. For all $S \in \mathcal{X}$ we have
	\[
	(E_1 1_\mathcal{F})_S = \frac{1}{\binom{n-2}{k-1}} \left( \sum_{i \in S} |\mathcal{F} \!\!\downarrow_i\!\!| -  \frac{k^2}{n}|\mathcal{F}|\right). 
	\]
\end{proposition}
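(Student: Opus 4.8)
The plan is to compute $E_1$ explicitly as a linear combination $\sum_{j=0}^k Q_1(j) A_j / |\mathcal{X}|$ using the relation $Q_j(i) = m_j P_i(j)/v_i$ from the preliminaries, and then to read off $(E_1 1_\mathcal{F})_S$ as a weighted count over the associates. Concretely, since $E_1 = \frac{1}{|\mathcal{X}|}\sum_{j=0}^k Q_1(j) A_j$, we have $(E_1 1_\mathcal{F})_S = \frac{1}{|\mathcal{X}|}\sum_{j} Q_1(j)\, |\{T \in \mathcal{F} : |S \cap T| = k-j\}|$. So the entire statement reduces to: (i) determine the two distinct values that $Q_1(j)$ takes — by symmetry of the Johnson scheme one expects $Q_1(j)$ to be affine in $j$, i.e.\ $Q_1(j) = \alpha - \beta j$ for constants $\alpha,\beta$ depending on $n,k$ — and (ii) re-sum the resulting expression into the stated closed form involving $\sum_{i\in S}|\mathcal{F}\!\!\downarrow_i|$ and $|\mathcal{F}|$.

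For step (i), I would extract $P_i(1)$ from Proposition~\ref{prop:johnson}: taking $j=1$ in the first sum gives $P_i(1) = \binom{k-1}{i}\binom{n-k-1}{i} - \binom{k-1}{i-1}\binom{n-k-1}{i-1}$, and then $Q_1(i) = m_1 P_i(1)/v_i$ where $v_i = \binom{k}{i}\binom{n-k}{i}$ and $m_1 = \dim V_1 = n-1$. A short binomial simplification should show $P_i(1)/v_i$ is affine in $i$; in fact $P_i(1)/v_i = 1 - \frac{in}{k(n-k)}$ is the familiar Johnson-scheme eigenvalue ratio, so $Q_1(i) = (n-1)\left(1 - \frac{in}{k(n-k)}\right)$. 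An alternative and perhaps cleaner route avoiding the character table entirely: recall that $E_0 + E_1$ is the projector onto the span of the degree-$\le 1$ functions $\{1\}\cup\{x_e : e\in\Sigma\}$, write $x_e$ as the characteristic vector of $\mathcal{F}_e$, orthogonalize (the Gram matrix of these vectors is easy since $\langle 1_{\mathcal{F}_e}, 1_{\mathcal{F}_{e'}}\rangle$ takes only two values depending on whether $e=e'$), and thereby get an explicit formula for the projection $E_0 + E_1$ applied to any vector; subtracting $E_0 1_\mathcal{F} = \frac{|\mathcal{F}|}{|\mathcal{X}|}\mathbf{1}$ yields $E_1 1_\mathcal{F}$. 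The quantity $\sum_{i\in S}|\mathcal{F}\!\!\downarrow_i|$ is precisely $\sum_{i\in S}\langle 1_{\mathcal{F}_i}, 1_\mathcal{F}\rangle$, which is exactly what appears when one expands $(E_0+E_1)1_\mathcal{F}$ in the basis $\{1_{\mathcal{F}_e}\}$, so this route makes the shape of the answer transparent.

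For step (ii), starting from $(E_1 1_\mathcal{F})_S = \frac{1}{|\mathcal{X}|}\sum_j Q_1(j)\,|\{T\in\mathcal{F}: |S\cap T| = k-j\}|$ with $Q_1(j)$ affine in $j$, I would split into the constant part and the part linear in $j$. The constant part sums $|\mathcal{F}|$ over all $T$; the linear part, $\sum_j (k - |S\cap T|)$-weighted, rearranges — writing $k - |S\cap T| = \sum_{i\in S}(1 - [i\in T])$ — into a combination of $|\mathcal{F}|$ and $\sum_{i\in S}|\mathcal{F}\!\!\downarrow_i|$ via double counting $\sum_{T\in\mathcal{F}}|S\cap T| = \sum_{i\in S}|\mathcal{F}\!\!\downarrow_i|$. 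Collecting terms and simplifying the binomial prefactors down to $1/\binom{n-2}{k-1}$ in front of $\sum_{i\in S}|\mathcal{F}\!\!\downarrow_i|$, and checking that the coefficient of $|\mathcal{F}|$ comes out to $-k^2/n$, finishes the proof. I expect the main obstacle to be purely bookkeeping: getting the binomial-coefficient constants to collapse to exactly $\binom{n-2}{k-1}$ and $k^2/n$, which requires care with the valencies $v_i$ and the multiplicity $m_1 = n-1$, and with the identity $|\mathcal{X}| = \binom{n}{k}$, $n\binom{n-2}{k-1} = k(n-k)\binom{n}{k}/\binom{n-1}{1}$-type manipulations. A useful sanity check along the way is the case $\mathcal{F} = \mathcal{X}$, where $E_1 1_\mathcal{X} = 0$ forces $\binom{n-1}{k-1}\cdot k = \frac{k^2}{n}\binom{n}{k}$, which indeed holds, confirming the normalization.
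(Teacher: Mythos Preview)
Your proposal is correct and follows essentially the same route as the paper: expand $E_1 = |\mathcal{X}|^{-1}\sum_j Q_1(j)A_j$, compute $Q_1(j) = (n-1)P_j(1)/v_j$ from Proposition~\ref{prop:johnson} and simplify to the affine expression $(n-1)\bigl(1 - \tfrac{nj}{k(n-k)}\bigr)$, then rewrite $(E_1 1_\mathcal{F})_S$ using the double-counting identity $\sum_{T\in\mathcal{F}}|S\cap T| = \sum_{i\in S}|\mathcal{F}\!\!\downarrow_i|$. The paper carries out exactly these steps (including the same intermediate form $(E_1)_{S,T} = \binom{n-2}{k-1}^{-1}(|S\cap T| - k^2/n)$), so your alternative Gram-matrix route is not needed here, though it would also work.
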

\begin{proof}
By the relations stated in (\ref{eq:relations}) and Proposition~\ref{prop:johnson}, we may write $E_1$ as follows:
\begin{align*}
E_1 &= \frac{1}{\binom{n}{k}}\sum_{i=0}^k Q_1(i) A_i\\ 
&= \frac{n-1}{\binom{n}{k}} \sum_{i=0}^k\frac{ P_i(1)}{\binom{k}{i}\binom{n-k}{i}} A_i\\
&= \frac{n-1}{\binom{n}{k}} \sum_{i=0}^k \frac{ 1 }{\binom{k}{i}\binom{n-k}{i} } \left( \sum_{r = 0}^i (-1)^{r} \binom{1}{r} \binom{k-1}{i-r} \binom{n-k-1}{i - r}\right)A_i\\
&=\frac{n-1}{\binom{n}{k}}\sum_{i=0}^k \frac{ 1 }{\binom{k}{i}\binom{n-k}{i} } \left(\binom{k-1}{i} \binom{n-k-1}{i} -
\binom{k-1}{i-1} \binom{n-k-1}{i-1} \right)A_i\\
&=\frac{n-1}{\binom{n}{k}} \sum_{i=0}^k \left( 1 -\frac{ni}{k(n-k) } \right)A_i\\
&=\frac{1}{\binom{n-2}{k-1}} \sum_{i=0}^k \left((k-i)- \frac{k^2}{n} \right)A_i.
\end{align*}
This shows that $(E_1)_{S,T} = \binom{n-2}{k-1}^{-1} (|S \cap T| - k^2/n)$ for all $S,T \in \mathcal{X}$, thus we have
\[
	(E_1 1_\mathcal{F})_S = \frac{1}{\binom{n-2}{k-1}} \left(  \sum_{T \in \mathcal{F}} |S \cap T| -  \frac{k^2}{n}\right) = \frac{1}{\binom{n-2}{k-1}} \left( \sum_{i \in S} |\mathcal{F} \!\!\downarrow_i\!\!| -  \frac{k^2}{n}|\mathcal{F}|\right), 
\]
where the last equality follows from double counting.
\end{proof}

\noindent Given the expression above for $E_1$, we are now in a position to give a remarkably short algebraic proof of the Erd\H{o}s--Ko--Rado theorem. The proof of the bound is well-known (see~\cite{Lovasz79}).\\

\noindent \textbf{Theorem 1} (restated).
	\emph{Let $k < n/2$. If $\mathcal{F} \subseteq \mathcal{X}$ is intersecting, then}
	$
	|\mathcal{F}| \leq \binom{n-1}{k-1}.	
	$
	\emph{Moreover, equality holds if and only if $\mathcal{F}$ is canonically intersecting, i.e., $\mathcal{F} = \mathcal{F}_i$ for some $i \in [n]$.}
\begin{proof}
The independent sets of $A_k \in \mathcal{J}(n,k)$ are precisely the intersecting families of $\mathcal{X}$. By Proposition~\ref{prop:johnson}, we have $P_k(i) = (-1)^i \binom{n-k-i}{k-i}$ for all $0 \leq i \leq k$. The ratio bound shows
\[
	|\mathcal{F}| \leq  \binom{n}{k} \frac{\binom{n-k-1}{k-1}}{\binom{n-k}{k} + \binom{n-k-1}{k-1}}  = \binom{n-1}{k-1} = |\mathcal{F}_i|,
\] 
thus $1_{\mathcal{F}} \in V_0 \oplus V_1$ for any maximum intersecting $\mathcal{F}$. We must show $\mathcal{F} = \mathcal{F}_i$ for some $i \in [n]$.

Let $P = E_0 + E_1$ and define $P_S := (P1_\mathcal{F})_S$ for all $S \in \mathcal{X}$. 
Since $A_1$ is connected, pick $S_1 \in \mathcal{F}$ and $S_0 \notin \mathcal{F}$ to be 1-related. Proposition~\ref{prop:johnsonE1} and the fact that $P_S = (1_\mathcal{F})_S$ imply
\[
	P_{S_1} - P_{S_0} = 
	 \frac{1}{\binom{n-2}{k-1}} \left( \sum_{\ell \in S_1 }|\mathcal{F} \!\! \downarrow_{\ell} \! | 
	-  \sum_{\ell \in S_0 }|\mathcal{F} \!\! \downarrow_{\ell} \! |\! \right)  = 1, \text{ thus } 	\sum_{\ell \in S_1} |\mathcal{F} \!\! \downarrow_{\ell} \! | - \sum_{\ell \in S_0} |\mathcal{F}\!\! \downarrow_{\ell} \! |  = \binom{n-2}{k-1}.
\]
Since $S_1$ and $S_0$ are 1-related, all but two terms cancel, i.e.,
$
  |\mathcal{F} \!\! \downarrow_{i} \!\! | -  |\mathcal{F}\!\! \downarrow_{j} \!\! | = \binom{n-2}{k-1}
$
for some $i,j \in \Sigma$. We also have that $|\mathcal{F} \!\! \downarrow_{j} \!\! | \geq \binom{n-2}{k-2}$; otherwise, $\mathcal{F} \!  \setminus \! \mathcal{F} \!\! \downarrow_{j}$ is an intersecting family of $\binom{[n]-j}{k}$ of size greater than $\binom{n-2}{k-1}$, a contradiction. We conclude $\mathcal{F} = \mathcal{F}_i$, since we now have
\[
  \binom{n-1}{k-1} \geq |\mathcal{F} \downarrow_{i} \! | \geq \binom{n-2}{k-1} + \binom{n-2}{k-2} = \binom{n-1}{k-1}. \qedhere
\]
\end{proof}

\section{Subspaces}

Let $\mathcal{X}$ be the set of $k$-dimensional subspaces of $\mathbb{F}_q^n$. Recall the \emph{$q$-bracket} $[n]_q := (1-q^n)/(1-q)$ and the \emph{$q$-factorial} $[n]_q! := [1]_q[2]_q \cdots [n]_q$. It is well-known that $|\mathcal{X}|$ is given by the \emph{Gaussian binomial coefficient} ${ n \brack k}_q$, i.e.,
\[
	|\mathcal{X}| = \frac{[n]_q!}{[n-k]_q![k]_q!} =  {n \brack k}_q = {n-1 \brack k}_q + q^{n-k}{n-1 \brack k-1}_q.
\]  
We suppress the subscript from the brackets when $q$ is clear from context. We take $\Sigma$ to be the 1-dimensional subspaces of $\mathbb{F}_q^n$. The \emph{Grassmann scheme} $\mathcal{J}_q(n,k)$ is defined over $\mathcal{X}$ such that $S,T \in \mathcal{X}$ are $i$-related if $\dim S \cap T = k - i$. 
\begin{proposition}\label{prop:grassmann}\emph{\cite{Delsarte73}} Let $P$ be the character table of $\mathcal{J}_q(n,k)$. For all $0 \leq i,j, \leq k$, we have
\begin{align*}
P_i(j) &= \sum_{r=i}^k (-1)^{r-i+j} q^{\binom{r-i}{2} + \binom{j}{2} +r(k-j)} {r \brack i} {k-j \brack r-j} {n-r-j \brack k-j}\\
&=  \sum_{r=0}^j (-1)^r q^{\binom{r}{2} - ri} { {j \brack r} {n+1-r \brack r}} { i \brack r}. 
\end{align*}
\end{proposition}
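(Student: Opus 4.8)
The plan is to follow the classical \emph{inclusion-matrix} route of Delsarte~\cite{Delsarte73}, which is the $q$-analogue of the derivation behind Proposition~\ref{prop:johnson}. For $0\le t\le k$ let $W_t$ be the $0/1$ matrix whose rows are indexed by the $t$-dimensional subspaces $U$ of $\mathbb{F}_q^n$, whose columns are indexed by the $k$-dimensional subspaces $S$, with $(W_t)_{U,S}=1$ iff $U\subseteq S$. Counting the $t$-subspaces contained in $S\cap T$ gives the Bose--Mesner identity
\[
  W_t^{\top}W_t \;=\; \sum_{i=0}^{k}{k-i\brack t}_q A_i ,
\]
and since ${k-i\brack t}_q=0$ for $i>k-t$ while ${t\brack t}_q=1$, the family of identities $(t=0,\dots,k)$ is triangular along the anti-diagonal, hence invertible inside $\mathfrak{A}$.

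The structural input I would invoke is the classical fact that for $t\le\min(k,n-k)$ the matrix $W_t$ has full row rank and $\operatorname{rowsp}(W_t)=V_0\oplus V_1\oplus\cdots\oplus V_t$. Consequently $W_t^{\top}W_t$ acts as $0$ on $V_j$ for $j>t$ and as a nonzero scalar $\mu_t(j)$ on $V_j$ for $j\le t$, where $\mu_t(j)$ is the common nonzero eigenvalue of $W_t^{\top}W_t$ and $W_tW_t^{\top}$; one computes $\mu_t(j)$ by a short count (using the factorisation of inclusion matrices and induction on $t$) to be an explicit product of Gaussian binomials times a power of $q$. The case $t=k-1$ already illustrates the mechanism: there $W_{k-1}^{\top}W_{k-1}=[k]_qI+A_1$, so the known spectrum of the Grassmann graph, $P_1(j)=q^{j+1}[k-j]_q[n-k-j]_q-[j]_q$, yields $\mu_{k-1}(j)=q^{j}[k-j]_q[n-k-j+1]_q$ for $j\le k-1$ and $\mu_{k-1}(k)=0$.

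Combining the two displays, for each fixed $j$ the numbers $(P_i(j))_{i=0}^{k}$ are the unique solution of the triangular system $\sum_{i=0}^{k}{k-i\brack t}_q P_i(j)=\mu_t(j)$ for $t=0,\dots,k$, whose right-hand side vanishes for $t<j$. Solving it by $q$-binomial inversion — Möbius inversion on the lattice of $\mathbb{F}_q$-subspaces — produces the first displayed closed form for $P_i(j)$. Reversing the order of summation and applying the $q$-Chu--Vandermonde identity (equivalently, a contiguous ${}_2\phi_1$-transformation) turns it into the second displayed form, the one exhibiting $P_i(j)$ as a degree-$j$ polynomial in the dual variable and hence the $Q$-polynomial structure of $\mathcal{J}_q(n,k)$; this last step is the exact $q$-analogue of passing between the two expressions in Proposition~\ref{prop:johnson}.

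I expect the main obstacle to be the filtration $\operatorname{rowsp}(W_t)=\bigoplus_{j\le t}V_j$: the incidence counting is routine, but this rank statement is not elementary. The cleanest justifications either show that the inclusion matrix $W_{t-1}W_t^{\top}$ has full rank (a $q$-analogue of Gottlieb's rank theorem for inclusion matrices of the subspace lattice) or invoke the decomposition $\mathbb{C}[\mathcal{X}]=\bigoplus_{j=0}^{k}V^{(n-j,j)}$ into irreducible $GL(n,q)$-modules; either way, this is where the real content lies. A secondary, purely computational obstacle is carrying the $q$-binomial inversion and the ${}_2\phi_1$-transformation through cleanly enough to land on exactly the two stated expressions, with the $q\to 1$ specialisation (Proposition~\ref{prop:johnson}) as a guide. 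Alternatively one can bypass the inclusion matrices by using that $\mathcal{J}_q(n,k)$ is metric: then $P_i(j)=\phi_i(P_1(j))$ for polynomials $\phi_i$ obeying the three-term recurrence fixed by the (easily computed) intersection numbers, and one identifies the $\phi_i$ with affine $q$-Krawtchouk polynomials whose basic-hypergeometric representation is the first display — but this merely relocates the work into the theory of $q$-orthogonal polynomials.
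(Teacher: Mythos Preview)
The paper does not give its own proof of this proposition: it is quoted verbatim from Delsarte~\cite{Delsarte73} and used as a black box (just as Proposition~\ref{prop:johnson} is). Your sketch is precisely the classical inclusion-matrix derivation that Delsarte carries out in the cited reference, so there is nothing to compare --- you are reconstructing the source the paper defers to, and your outline is sound.
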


\begin{proposition}\label{prop:grassmannE1}
	Let $\mathcal{F} \subseteq \mathcal{X}$. For all $S \in \mathcal{X}$ we have
	\[
	(E_1 1_\mathcal{F})_S  = \frac{1}{q^{k-1}{n-2 \brack k-1}} \left( \sum_{\ell \in S} |\mathcal{F} \!\!\downarrow_\ell\!\!| -  \frac{[k]^2}{[n]}|\mathcal{F}|\right). 
	\]
\end{proposition}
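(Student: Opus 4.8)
The plan is to mirror exactly the computation carried out for the Johnson scheme in Proposition~\ref{prop:johnsonE1}, now using the $q$-analogue data from Proposition~\ref{prop:grassmann}. First I would write $E_1 = \frac{1}{|\mathcal{X}|}\sum_{i=0}^k Q_1(i) A_i$ and convert the dual eigenvalues to primal ones via $Q_1(i) = m_1 P_i(1)/v_i$, where $m_1 = {n \brack 1} - 1$ is the multiplicity of the first eigenspace of the Grassmann scheme and $v_i = {k \brack i}{n-k \brack i} q^{i^2}$ is the valency of the $i$-th associate. This reduces everything to evaluating $P_i(1)$, which the second formula in Proposition~\ref{prop:grassmann} gives cleanly: taking $j=1$, only the $r=0$ and $r=1$ terms survive, yielding $P_i(1) = {n+1-0 \brack 0}{i \brack 0} - q^{-i}{n \brack 1}{i \brack 1} = 1 - q^{-i}[n]\,\frac{[i]}{[1]} = 1 - q^{-i}[n][i]$ (up to checking the precise $q$-power from $\binom{r}{2}-ri$ with $r=1$, which is $-i$). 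Hmm — I should double-check this normalization against the case $i=0$, where $P_0(1) = 1$ must hold, and against $i=k$; the right form should collapse so that the bracketed factor in $A_i$'s coefficient becomes an affine function of the ``$q$-intersection size'' $[k-i]$.

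The second step is the algebraic simplification: after substituting $P_i(1)$ and $v_i$, the coefficient of $A_i$ in $E_1$ should telescope, exactly as the lines in the Johnson proof collapse $\binom{k-1}{i}\binom{n-k-1}{i} - \binom{k-1}{i-1}\binom{n-k-1}{i-1}$ into $1 - \frac{ni}{k(n-k)}$, and then rescale to $(k-i) - k^2/n$. Here the analogous manipulation, using $q$-Pascal and the identity ${k \brack i}{n-k \brack i}^{-1}\big({k-1 \brack i}{n-k-1 \brack i} q^{\cdot} - {k-1 \brack i-1}{n-k-1 \brack i-1}q^{\cdot}\big)$, should produce something proportional to $[k-i] - [k]^2/[n]$, and pulling out the common factor $\frac{1}{q^{k-1}{n-2 \brack k-1}}$ gives $(E_1)_{S,T} = \frac{1}{q^{k-1}{n-2 \brack k-1}}\big(\,|S\cap T|_{\dim\text{-}q}\; [k-i]\big/\!\cdots\big)$ — more precisely, $(E_1)_{S,T} = \frac{1}{q^{k-1}{n-2 \brack k-1}}\big([\dim(S\cap T)] - [k]^2/[n]\big)$ where here I am writing $[\dim(S\cap T)]$ loosely; the key point is that $S,T$ being $i$-related contributes the value $[k-i]$ in the position where the Johnson computation had $k-i$.

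The final step is the double-counting identity analogous to the last display in Proposition~\ref{prop:johnsonE1}: from $(E_1 1_\mathcal{F})_S = \frac{1}{q^{k-1}{n-2 \brack k-1}}\big(\sum_{T\in\mathcal{F}}[\dim(S\cap T)] - \frac{[k]^2}{[n]}|\mathcal{F}|\big)$, I need $\sum_{T\in\mathcal{F}}[\dim(S\cap T)] = \sum_{\ell\in S}|\mathcal{F}\!\!\downarrow_\ell|$, where the sum over $\ell$ ranges over the $1$-dimensional subspaces (``points'') contained in $S$ — wait, that is not right, since a single $T$ with $\dim(S\cap T)=d$ contains $[d] = 1 + q + \cdots + q^{d-1}$ points of $S$, not $d$ of them. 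So the correct combinatorial statement is $\sum_{T\in\mathcal{F}} [\dim(S\cap T)] = \sum_{T\in\mathcal{F}} \#\{\ell \in \Sigma : \ell \subseteq S\cap T\} = \sum_{\ell\subseteq S, \ell\in\Sigma} |\mathcal{F}\!\!\downarrow_\ell|$, which is exactly $\sum_{\ell\in S}|\mathcal{F}\!\!\downarrow_\ell|$ under the convention that ``$\ell\in S$'' means the point $\ell$ is contained in the subspace $S$ (and ``$\mathcal{F}\!\!\downarrow_\ell$'' collects members of $\mathcal{F}$ containing $\ell$). This is the $q$-analogue of ``$|S\cap T| = \sum_{i\in S}[i\in T]$'' and is what makes the $[\cdot]$-bracket appear naturally rather than the raw dimension.

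The main obstacle I expect is bookkeeping the exponents of $q$: the valencies carry a factor $q^{i^2}$, the eigenvalue formula carries $q^{\binom{r}{2}-ri}$, and $q$-Pascal introduces further $q^{n-k}$ or $q^k$ type factors, so getting the telescoping to land exactly on the clean coefficient $\big([k-i]-[k]^2/[n]\big)\big/\big(q^{k-1}{n-2\brack k-1}\big)$ requires care — but it is mechanical, and the $q\to 1$ limit must recover Proposition~\ref{prop:johnsonE1}, which provides a strong consistency check at every line. Accordingly, in the paper I would either present the full chain of equalities in an \texttt{align*} block exactly paralleling the Johnson proof, or simply assert that ``the computation is entirely analogous to that of Proposition~\ref{prop:johnsonE1}, using the $q$-analogues of the relevant binomial identities,'' and record only the resulting formula for $(E_1)_{S,T}$ followed by the double-counting step.
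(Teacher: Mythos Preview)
Your proposal is correct and is exactly the approach the paper has in mind: the authors state explicitly that the calculation for $E_1$ in the Grassmann scheme is ``entirely analogous'' to the one carried out for the Johnson scheme in Proposition~\ref{prop:johnsonE1} and omit the details. Your identification of the key ingredients --- the formula $Q_1(i)=m_1P_i(1)/v_i$ with $m_1=[n]-1$ and $v_i=q^{i^2}{k\brack i}{n-k\brack i}$, the telescoping to $(E_1)_{S,T}=\frac{1}{q^{k-1}{n-2\brack k-1}}\big([\dim(S\cap T)]-[k]^2/[n]\big)$, and the double-counting $\sum_{T\in\mathcal{F}}[\dim(S\cap T)]=\sum_{\ell\in S}|\mathcal{F}\!\!\downarrow_\ell|$ via the number of points in $S\cap T$ --- is precisely the intended $q$-analogue.
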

\noindent Two subspaces $S,T \in \mathcal{X}$ are \emph{skew} if $\dim S \cap T = 0$. The following proposition is a special case of~\cite[Lemma 9.3.1]{GodsilMeagher} that is not difficult to show.
\begin{proposition}\label{prop:skew}
	If $U$ is a $(k-1)$-dimensional subspace of a $k$-dimensional vector space $S$ over $\mathbb{F}_q$, then the number of 1-dimensional subspaces of $S$ skew to $U$ is $q^{k-1}$.
\end{proposition}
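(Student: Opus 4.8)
The plan is to count by passing from $1$-dimensional subspaces to the nonzero vectors they contain. First I would record the basic dichotomy: for a $1$-dimensional subspace $L \subseteq S$, the intersection $L \cap U$ is a subspace of $L$, hence equals either $\{0\}$ or $L$; thus $L$ is skew to $U$ precisely when $L \not\subseteq U$, equivalently when $L$ contains some vector of $S$ lying outside $U$.

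Next I would count the vectors of $S$ outside $U$: since $\dim S = k$ and $\dim U = k-1$ over $\mathbb{F}_q$, there are $q^k - q^{k-1} = q^{k-1}(q-1)$ of them. Now group these vectors by the line they span. Every nonzero $v \in S \setminus U$ spans a line $\langle v \rangle$ skew to $U$ by the dichotomy above, and conversely every line skew to $U$ arises this way. Moreover, if $L$ is skew to $U$ then all $q-1$ of its nonzero vectors lie outside $U$ (none lies in $U$, as $L \cap U = \{0\}$), and two distinct skew lines share no nonzero vector. Hence $S \setminus U$ is partitioned into blocks of size $q-1$, one per line skew to $U$, so the number of such lines is $q^{k-1}(q-1)/(q-1) = q^{k-1}$.

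Since this is a two-line double count, there is no real obstacle; the only point to check carefully is the elementary fact that a line through the origin meeting the hyperplane $U$ nontrivially is contained in $U$, which gives the dichotomy. An equally short alternative would be the Gaussian-binomial computation: the lines inside $U$ are exactly the lines of $S$ not skew to $U$, so the count is $[k]_q - [k-1]_q = \frac{q^k-1}{q-1} - \frac{q^{k-1}-1}{q-1} = q^{k-1}$.
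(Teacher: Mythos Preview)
Your proof is correct. The paper does not actually supply a proof of this proposition; it only remarks that it is a special case of \cite[Lemma 9.3.1]{GodsilMeagher} and ``not difficult to show.'' Your double count of the vectors in $S \setminus U$ (and the alternative $[k]_q - [k-1]_q = q^{k-1}$) is exactly the kind of elementary verification the paper is gesturing at, so there is nothing to compare against and nothing to fix.
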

\noindent We say $\mathcal{F} \subseteq \mathcal{X}$ is \emph{intersecting} if no two of its members are skew. 
We are now ready to give a short proof of a $q$-analogue of Theorem~\ref{thm:EKR}. The proof of the bound is well-known (see~\cite{FranklW86}). 
\begin{theorem}\emph{\cite{FranklW86}}
	Let $k < n/2$. If $\mathcal{F} \subseteq \mathcal{X}$ is intersecting, then $|\mathcal{F}| \leq {n-1 \brack k-1}$.	
	Moreover, equality holds if and only if $\mathcal{F}$ is canonically intersecting.
\end{theorem}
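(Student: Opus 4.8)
The plan is to mimic the proof of Theorem~\ref{thm:EKR} from Section~\ref{sec:subsets}, substituting the $q$-analogue ingredients at each step. First I would invoke the ratio bound applied to the associate $A_k$ of the Grassmann scheme $\mathcal{J}_q(n,k)$, whose independent sets are exactly the intersecting families; using the formula of Proposition~\ref{prop:grassmann} to evaluate $P_k(i)$ (in particular $P_k(0)$ and $P_k(1)$, which govern $\lambda_{\max}$ and $\lambda_{\min}$), the ratio bound should collapse to ${n-1 \brack k-1} = |\mathcal{F}_e|$. Equality in the ratio bound then forces $1_{\mathcal{F}} \in V_0 \oplus V_1$ for any maximum intersecting family $\mathcal{F}$, so that $(E_0+E_1)1_{\mathcal{F}} = 1_{\mathcal{F}}$.

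Next I would set $P_S := ((E_0+E_1)1_{\mathcal{F}})_S$ and exploit connectivity of the graph $A_1$: pick $S_1 \in \mathcal{F}$ and $S_0 \notin \mathcal{F}$ that are $1$-related, i.e., $\dim(S_1 \cap S_0) = k-1$. Then $P_{S_1} - P_{S_0} = 1$, and Proposition~\ref{prop:grassmannE1} turns this into
\[
\sum_{\ell \in S_1} |\mathcal{F}\!\!\downarrow_\ell\!| \;-\; \sum_{\ell \in S_0} |\mathcal{F}\!\!\downarrow_\ell\!| \;=\; q^{k-1}{n-2 \brack k-1}.
\]
Here the main structural difference from the subset case appears: $S_1$ and $S_0$ share a common hyperplane $U = S_1 \cap S_0$, so the two sums agree on all $1$-subspaces $\ell \subseteq U$, and what remains is
\[
\sum_{\ell \subseteq S_1,\, \ell \not\subseteq U} |\mathcal{F}\!\!\downarrow_\ell\!| \;-\; \sum_{\ell \subseteq S_0,\, \ell \not\subseteq U} |\mathcal{F}\!\!\downarrow_\ell\!| \;=\; q^{k-1}{n-2 \brack k-1},
\]
and by Proposition~\ref{prop:skew} each of these sums ranges over exactly $q^{k-1}$ one-dimensional subspaces. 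This is where I expect the main obstacle to lie: unlike the set case, where "all but two terms cancel," here we get a difference of two sums of $q^{k-1}$ terms each, so we cannot immediately isolate a single dominant atom. To push through, I would argue by averaging — some $\ell^* \subseteq S_1$, $\ell^* \not\subseteq U$ has $|\mathcal{F}\!\!\downarrow_{\ell^*}\!| \geq \frac{1}{q^{k-1}} \cdot q^{k-1}{n-2 \brack k-1} + (\text{average over } S_0)$ — or, more cleanly, I would reselect $S_0$ (still $1$-related to $S_1$) to minimize $\sum_{\ell \subseteq S_0, \ell \not\subseteq U}|\mathcal{F}\!\!\downarrow_\ell\!|$ and show this forces a single $\ell$ with $|\mathcal{F}\!\!\downarrow_\ell\!|$ large; alternatively one shows the difference being this large and the nonnegativity of each term forces at least one $\ell^* \subseteq S_1$ with $|\mathcal{F}\!\!\downarrow_{\ell^*}\!| \geq {n-2 \brack k-1}$.

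Finally, having found an atom (one-dimensional subspace) $\ell^*$ with $|\mathcal{F}\!\!\downarrow_{\ell^*}\!| \geq {n-2 \brack k-1}$, I would close exactly as in the subset proof: the family $\mathcal{F} \setminus \mathcal{F}\!\!\downarrow_{\ell^*}$ lives inside the Grassmannian of $k$-subspaces of $\mathbb{F}_q^n / \ell^*$ (equivalently, $k$-subspaces skew to $\ell^*$, which is a copy of $\mathcal{J}_q(n-1,k)$ structure) and is still intersecting, so by the already-established bound it has size at most ${n-2 \brack k-1}$... wait — I should instead lower-bound $|\mathcal{F}\!\!\downarrow_{\ell^*}\!|$ by combining the inequality just obtained with a complementary inequality showing the "other part" is at least $q^{k-1}{n-2 \brack k-1}$-ish; chaining these with $|\mathcal{F}| \le {n-1 \brack k-1} = {n-2 \brack k-1} + q^{n-k}{n-2 \brack k-1}$ (the $q$-Pascal identity quoted in the excerpt) should pin $|\mathcal{F}\!\!\downarrow_{\ell^*}\!| = q^{n-k}{n-2 \brack k-1} = |\mathcal{F}_{\ell^*}|$, forcing $\mathcal{F} = \mathcal{F}_{\ell^*}$. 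The delicate bookkeeping — getting the powers of $q$ and the $q$-binomial identities to line up so that the two chained inequalities meet exactly — is the part most likely to require care, but it is routine given Propositions~\ref{prop:grassmann}, \ref{prop:grassmannE1}, and~\ref{prop:skew}.
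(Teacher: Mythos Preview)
Your outline through the averaging step is essentially the paper's argument: ratio bound on $A_k$, then $P_{S_1}-P_{S_0}=1$ with $S_1,S_0$ being $1$-related, then cancel over $U=S_1\cap S_0$ and average over the $q^{k-1}$ surviving lines to find some $\ell$ with $|\mathcal{F}\!\!\downarrow_\ell|\ge {n-2\brack k-1}$. That part is fine.

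The closing argument, however, has a real gap. Your quotient idea is incorrect: the $k$-subspaces skew to $\ell^*$ do \emph{not} form a copy of $\mathcal{J}_q(n-1,k)$ --- the projection to $\mathbb{F}_q^n/\ell^*$ is $q^k$-to-one, so you cannot bound $|\mathcal{F}\setminus\mathcal{F}\!\!\downarrow_{\ell^*}|$ by ${n-2\brack k-1}$ this way. Your fallback ``chain the $q$-Pascal identity'' plan also cannot close: the identity is ${n-1\brack k-1}={n-2\brack k-1}+q^{\,n-k}{n-2\brack k-2}$ (not $q^{n-k}{n-2\brack k-1}$), and $|\mathcal{F}_{\ell^*}|={n-1\brack k-1}$, not $q^{n-k}{n-2\brack k-1}$; in the subset proof the Pascal step works only because one has a lower bound on the \emph{negative} term $|\mathcal{F}\!\!\downarrow_j|$, which you lost when you averaged.

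What the paper actually does after obtaining $|\mathcal{F}\!\!\downarrow_\ell|\ge{n-2\brack k-1}$ is a contradiction argument: assume some $S'\in\mathcal{F}\setminus\mathcal{F}\!\!\downarrow_\ell$, observe that every member of $\mathcal{F}\!\!\downarrow_\ell$ must meet $S'$, and invoke the explicit count that the number of $k$-subspaces through $\ell$ meeting a fixed $S'\not\ni\ell$ is ${n-1\brack k-1}-q^{k(k-1)}{n-k-1\brack k-1}$. The contradiction then hinges on the inequality $q^{k(k-1)}{n-k-1\brack k-1}>q^{\,n-1-k}{n-2\brack k-2}$ for $k<n/2$, which is not a routine identity --- the paper proves it via an infinite-product estimate. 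This counting-plus-inequality step is the missing idea in your proposal.
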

\begin{proof}
The independent sets of $A_k \in \mathcal{J}_q(n,k)$ are precisely the intersecting families of $\mathcal{X}$. By Proposition~\ref{prop:grassmann}, we have $P_k(i) = (-1)^i q^{\binom{i}{2}+k(k-i)} {n-k-i \brack k-i}$ for all $i$. By the ratio bound
\[
		|\mathcal{F}| \leq  {n \brack k} \frac{q^{k(k-1)}{n-k-1 \brack k-1}}{q^{k^2}{n-k\brack k} + q^{k(k-1)} {n-k-1 \brack k-1}}  \leq {n-1 \brack k-1} = |\mathcal{F}\!\!\downarrow_{\ell}\!\!|,
\]
thus $1_\mathcal{F} \in V_{0} \oplus V_{1}$ for any maximum intersecting $\mathcal{F}$. We must show $\mathcal{F} = \mathcal{F}_\ell$ for some $\ell \in \Sigma$.  

Let $P = E_0 + E_1$ be the orthogonal projection onto $V_0 \oplus V_1$. Define $P_S := (P1_\mathcal{F})_S$, which by Proposition~\ref{prop:grassmannE1} can be written as
\[
	P_S = \frac{|\mathcal{F}|}{{n \brack k}} + \frac{1}{q^{k-1}{n-2 \brack k-1}} \left( \sum_{\ell \in S }|\mathcal{F} \!\!\downarrow_{\ell}\!\! | - \frac{k^2}{n} |\mathcal{F}| \right).
\]
Since $A_1$ is connected, pick $S_1 \in \mathcal{F}$ and $S_0 \notin \mathcal{F}$ to be 1-related. Since $P_S = (1_\mathcal{F})_S$, we have
	\[
		P_{S_1} - P_{S_0} = 
		\frac{1}{q^{k-1}{n-2 \brack k-1}} \sum_{\ell \in S_1 }|\mathcal{F} \!\! \downarrow_{\ell}\! \!| 
	-  \frac{1}{q^{k-1}{n-2 \brack k-1}}  \sum_{\ell \in S_0 }|\mathcal{F} \!\! \downarrow_{\ell}\! \!|  = 1, 
	\]
which implies that
	\[
		\sum_{\ell \in S_1} |\mathcal{F} \!\! \downarrow_{\ell} \!\! | - \sum_{\ell \in S_0} |\mathcal{F}\!\! \downarrow_{\ell} \!\!|  = q^{k-1}{n-2 \brack k-1}.
	\]
Let $X_1,X_0 \subseteq \Sigma$ be the 1-dimensional subspaces of $S_1$, $S_0$ that are skew to $S_1 \cap S_0$. We have
	\[
		\sum_{\ell \in  X_1 } |\mathcal{F} \!\! \downarrow_{\ell} \!\!| - \sum_{\ell \in X_0} |\mathcal{F} \!\! \downarrow_{\ell} \!\! |  = q^{k-1}{n-2 \brack k-1}.
	\]
Since $S_1$ and $S_0$ are 1-related, we have $|X_1| = |X_0| = q^{k-1}$ by Proposition~\ref{prop:skew}. After dropping negative terms and averaging, there exists an $\ell \in \Sigma$ such that $$|\mathcal{F} \!\! \downarrow_{\ell} \! \!|  \geq 	{n-2 \brack k-1} = { n-1 \brack k-1} - q^{n-1-k} {n-2 \brack k-2}. $$
Suppose there exists an $S' \in \mathcal{F} \setminus \mathcal{F} \!\! \downarrow_{\ell}$. Note that $S'$ must intersect each member of $\mathcal{F}\!\! \downarrow_{\ell}$.  A routine calculation shows that the total number of $S \in \mathcal{X}$ that contain $\ell$ and intersect $S'$ equals ${ n-1 \brack k-1} - q^{k(k-1)} {n-k-1 \brack k-1}$ (see \cite[pg. 3]{BlokhuisBCFMPS10}, for example), thus
\[
	|\mathcal{F} \!\! \downarrow_{\ell}\!\!| \leq { n-1 \brack k-1} - q^{k(k-1)} {n-k-1 \brack k-1}.
\]
To obtain a contradiction, we prove the following claim.\\

\noindent \textbf{Claim 1:} For all $k < n/2$, we have $q^{k(k-1)} {n-k-1 \brack k-1} > q^{n-1-k} {n-2 \brack k-2} $.
\begin{proof}[Proof of Claim 1]
After some rearranging, the claim is equivalent to showing that
\[
	q^{k-1}(q^{n-1-k} -1) ~q\left( \prod_{i=2}^{k-1} \left( \frac{q^{n-k-i} - 1}{q^{n-k-i}} \right) \right) > (q^{k-1}-1)q^{n-1-k}.
\]
For all $k < n/2$, we have $q^{k-1}(q^{n-1-k} -1) > (q^{k-1}-1)q^{n-1-k}$, so it suffices to show that 
$$ f(n,k,q) := \prod_{i=2}^{k-1} \left( \frac{q^{n-k-i} - 1}{q^{n-k-i}} \right)  > 1/2 \geq 1/q \quad \text{ for all } k < n/2.$$
For any fixed $n,q$, we have that $f$ is strictly decreasing as $k$ increases. For any fixed $n,k$, we have that $f$ is strictly increasing as a $q$ increases. Thus a lower bound on $f$ is obtained by setting $k = \lceil n/2 \rceil - 1$, $q = 2$, and letting $n \rightarrow \infty$, i.e.,
\[ f(n,k,q) \geq \lim_{n \rightarrow \infty} (3/4)(7/8) \cdots ((2^{\lceil n/2 \rceil - 1}-1)/2^{\lceil n/2 \rceil - 1}).\]
After taking logs, it suffices to show that 
 $\sum_{n=2}^{\infty} \log(2^{n}-1) - \log(2^n) > -1$. By standard Taylor expansion estimates, we have $\log(x + 1)-\log(x) \leq 1/x$, and since $\sum_{n=2}^{\infty} 1/(2^n-1) < 1$, the proof of the claim follows.
\end{proof}
\noindent We conclude that $\mathcal{F} \setminus \mathcal{F} \!\! \downarrow_{\ell}$ is empty, thus $\mathcal{F} = \mathcal{F}_{\ell}$, as desired.
\end{proof}

\section{Words}

Let $q \geq 3$ throughout, and let $\mathcal{X} = [q]^n$ be the set of $n$-symbol words $w$ drawn from $[q]$ with $\Sigma = [n] \times [q]$, so that $w = \{(i,w(i))\}_{i=1}^n$. The \emph{Hamming scheme} $\mathcal{H}(n,q)$ is defined such that two words of $\mathcal{X}$ are $i$-related if they agree on exactly $n-i$ coordinate positions. 
\begin{proposition}\label{prop:hamming}\emph{\cite{Delsarte73}} Let $P$ be the character table of  $\mathcal{H}(n,q)$. For all $0 \leq i,j \leq n$, we have
	$$P_i(j) = \sum_{r = 0}^j (-1)^r (q-1)^{i-r} \binom{n-j}{i-r}\binom{j}{r}.$$
\end{proposition}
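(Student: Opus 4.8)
The plan is to exploit the product structure of the Hamming scheme. The key point is that $\mathcal{H}(n,q)$ is the $n$-fold direct product (extension) of the one-class scheme $\mathcal{H}(1,q)$ on $q$ points, whose associates are $A_0 = I$ and $A_1 = J-I$; indexing the factors by the coordinate positions $t \in [n]$, two words lie at Hamming distance $i$ exactly when the set of coordinates on which they disagree has size $i$, so
\[
A_i \;=\; \sum_{\substack{S \subseteq [n]\\ |S| = i}}\; \bigotimes_{t \in S}(J-I)\ \otimes\ \bigotimes_{t \notin S} I .
\]

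First I would record the spectrum of the single factor: on $\mathbb{C}^{[q]}$ the matrix $J-I$ has eigenvalue $q-1$ on the one-dimensional all-ones space $U_0$ and eigenvalue $-1$ on its $(q-1)$-dimensional orthogonal complement $U_1$. By the standard theory of products of commutative matrix algebras, the common eigenspaces of $\mathcal{H}(n,q)$ are the tensor products $V_T := \bigotimes_{t \in T} U_1 \otimes \bigotimes_{t \notin T} U_0$ for $T \subseteq [n]$, and the $j$-th eigenspace in the degree ordering is $V_j := \bigoplus_{|T| = j} V_T$, of dimension $m_j = \binom{n}{j}(q-1)^j$. I would insert a short line checking that $V_j$ is indeed the space of pure degree-$j$ functions on $[q]^n$ in the sense of Section~\ref{sec:prelims} (this reduces to the single-coordinate fact that the degree-$\le 1$ functions span $\mathbb{C}^{[q]} = U_0 \oplus U_1$), so that this labelling matches the ordering implicit in the definition of the character table $P$; note also that $\sum_j \binom{n}{j}(q-1)^j = q^n = |\mathcal{X}|$, confirming there are no further eigenspaces.

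Next I would compute the eigenvalue $P_i(j)$ of $A_i$ on $V_T$ with $|T| = j$. A single rank-one tensor summand $\bigotimes_{t \in S}(J-I) \otimes \bigotimes_{t \notin S} I$ acts on $V_T$ by the scalar $(-1)^{|S \cap T|}(q-1)^{|S \setminus T|}$, since $J-I$ contributes $-1$ on each $U_1$-factor and $q-1$ on each $U_0$-factor while $I$ contributes $1$ throughout. Summing over all $i$-subsets $S$ and grouping by $r := |S \cap T|$ (there are $\binom{j}{r}\binom{n-j}{i-r}$ choices of such $S$) gives
\[
P_i(j) \;=\; \sum_{r=0}^{j}(-1)^r (q-1)^{i-r}\binom{n-j}{i-r}\binom{j}{r},
\]
which is exactly the asserted formula.

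There is no real obstacle here, and I expect the authors may simply cite~\cite{Delsarte73}; the one point meriting care is the bookkeeping that identifies the weight-$j$ labelling of the product scheme's eigenspaces with the polynomial-degree ordering used to define $P$. An alternative, essentially equivalent route avoids the product machinery altogether: $\mathcal{H}(n,q)$ is a translation scheme over $\mathbb{Z}_q^n$, so $P_i(j) = \sum_{x:\,\mathrm{wt}(x)=i}\chi_a(x)$ for any fixed $a \in \mathbb{Z}_q^n$ of Hamming weight $j$; this character sum factors across coordinates, contributing $\sum_{c\neq 0}\omega^{a_t c} = -1$ on the $j$ coordinates with $a_t \neq 0$ and a binary "zero/nonzero" choice on the remaining $n-j$, and expanding the product yields the same Krawtchouk expression.
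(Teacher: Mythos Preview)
Your argument is correct and is the standard derivation of the Krawtchouk polynomials as eigenvalues of the Hamming scheme via its product structure; the alternative character-sum route you sketch is also fine. As you anticipated, the paper does not prove this proposition at all but simply cites it from~\cite{Delsarte73}, so there is nothing to compare against.
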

\begin{proposition}\label{prop:hammingE1}
For all $w \in \mathcal{X}$ and $\mathcal{F} \subseteq \mathcal{X}$ we have
	\[
	 (E_11_{\mathcal{F}})_w = \frac{1}{q^{n-1}} \sum_{i = 1}^n \left(|\mathcal{F} \!\!\downarrow_{(i,w(i))} \!\! |-\frac{n}{q}|\mathcal{F}| \right).
	\]
\end{proposition}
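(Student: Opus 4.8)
The plan is to mimic exactly the computation carried out for the Johnson scheme in Proposition~\ref{prop:johnsonE1}, now using the eigenvalues of the Hamming scheme from Proposition~\ref{prop:hamming}. First I would write $E_1 = \frac{1}{q^n}\sum_{i=0}^n Q_1(i) A_i$ using the second relation in~(\ref{eq:relations}), and substitute $Q_1(i) = m_1 P_i(1)/v_i$ with $m_1 = n(q-1)$ and $v_i = \binom{n}{i}(q-1)^i$ (the valency of the $i$-th associate of $\mathcal{H}(n,q)$, i.e.\ the number of words at Hamming distance $i$). From Proposition~\ref{prop:hamming} with $j=1$ one gets $P_i(1) = (q-1)^i\binom{n-1}{i} - (q-1)^{i-1}\binom{n-1}{i-1}$, and after cancelling the binomial/valency factors this should collapse to $Q_1(i) = n(q-1)\left(1 - \frac{iq}{n(q-1)}\right) = (q-1)\left(n - i \cdot \frac{q}{q-1}\right)$, or equivalently something proportional to $\left((n-i) - \frac{n}{q}\right)$ after pulling out the right constant.

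Next I would observe that $n - i$ is exactly the number of coordinate positions on which two $i$-related words agree, i.e.\ $|w \cap w'|$ in the set-notation $w = \{(i,w(i))\}_{i=1}^n$. Hence the previous display shows $(E_1)_{w,w'} = \frac{1}{q^{n-1}}\left(|w \cap w'| - \frac{n}{q}\right)$ for all $w,w' \in \mathcal{X}$ (up to verifying the scalar out front is $q^{-(n-1)}$, which follows by matching $i=0$: $v_0 = 1$, $P_0(1) = n(q-1)$, giving $(E_1)_{w,w} = m_1/q^n = n(q-1)/q^n$, and indeed $\frac{1}{q^{n-1}}(n - n/q) = \frac{n(q-1)}{q^n}$). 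Then
\[
(E_1 1_\mathcal{F})_w = \frac{1}{q^{n-1}}\sum_{w' \in \mathcal{F}}\left(|w \cap w'| - \frac{n}{q}\right) = \frac{1}{q^{n-1}}\sum_{i=1}^n\left(|\mathcal{F}\!\!\downarrow_{(i,w(i))}\!\!| - \frac{n}{q}|\mathcal{F}|\right),
\]
where the last step is the double-counting identity $\sum_{w' \in \mathcal{F}}|w \cap w'| = \sum_{i=1}^n \#\{w' \in \mathcal{F} : w'(i) = w(i)\} = \sum_{i=1}^n |\mathcal{F}\!\!\downarrow_{(i,w(i))}\!\!|$, exactly as in the Johnson case.

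I do not expect a genuine obstacle here; the only place demanding care is the binomial-coefficient simplification of $Q_1(i)$ — in particular getting the factors of $(q-1)$ and the constant $1/q^{n-1}$ right, and confirming that the telescoping $\binom{n-1}{i}(q-1)^i - \binom{n-1}{i-1}(q-1)^{i-1}$ divided by $\binom{n}{i}(q-1)^i$ really does produce the affine-in-$i$ expression $1 - iq/(n(q-1))$. Since the excerpt explicitly says these $E_1$ computations are "entirely analogous" to the Johnson one and are omitted, this proposition is presumably meant to be discharged by citing that analogy, so the write-up can be kept to essentially the two displays above.
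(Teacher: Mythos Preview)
Your proposal is correct and is exactly the computation the paper omits as ``entirely analogous'' to Proposition~\ref{prop:johnsonE1}: compute $Q_1(i)=m_1P_i(1)/v_i$, simplify to get $(E_1)_{w,w'}=q^{-(n-1)}(|w\cap w'|-n/q)$, then double-count. One small slip: in your final display the term $\tfrac{n}{q}|\mathcal{F}|$ should sit \emph{outside} the sum over $i$ (equivalently, $\tfrac{1}{q}|\mathcal{F}|$ per term) --- as written the two sides differ by a factor of $n$ in the constant, though this is inherited from what appears to be a typo in the stated proposition and is immaterial since the constant cancels in the subsequent application to $P_{w_1}-P_{w_0}$.
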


\noindent We say $\mathcal{F} \subseteq \mathcal{X}$ is \emph{intersecting} if for any $w,w' \in \mathcal{F}$ there exists an $i \in [n]$ such that $w(i) = w'(i)$. We are now ready to give a short proof of the EKR theorem for intersecting families of words. The proof of the bound is well-known (see~\cite{Moon82}).
\begin{theorem}\emph{\cite{Moon82}}
	If $\mathcal{F} \subseteq \mathcal{X}$ is intersecting, then $|\mathcal{F}| \leq q^{n-1}$. Moreover, equality holds if and only if $\mathcal{F}$ is canonically intersecting.
\end{theorem}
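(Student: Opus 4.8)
The plan is to follow the exact template established in the previous two sections (subsets and subspaces), since the Hamming scheme behaves identically in structure. First I would invoke the ratio bound: the independent sets of the associate $A_n \in \mathcal{H}(n,q)$ — words that disagree in \emph{every} coordinate — are precisely the complements of intersecting families, so actually I want the graph whose independent sets are the intersecting families, namely $A_n$ itself is the ``disagree everywhere'' graph and the intersecting families are its independent sets. Using Proposition~\ref{prop:hamming} to compute $P_n(i)$, one gets $\lambda_{\max} = (q-1)^n$ and $\lambda_{\min} = -(q-1)^{n-1}$ (up to the usual sign pattern $P_n(i) = (-1)^i(q-1)^{n-i}$), so the ratio bound yields $|\mathcal{F}| \le q^n \cdot \frac{(q-1)^{n-1}}{(q-1)^n + (q-1)^{n-1}} = q^n/q = q^{n-1} = |\mathcal{F}_e|$. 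Equality in the ratio bound forces $1_\mathcal{F} \in V_0 \oplus V_1$, so $P1_\mathcal{F} = 1_\mathcal{F}$ where $P = E_0 + E_1$.

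Next I would exploit this fixed-point relation locally, exactly as in the subset case. Since $A_1$ is connected, pick $w_1 \in \mathcal{F}$ and $w_0 \notin \mathcal{F}$ that are $1$-related, i.e.\ they differ in exactly one coordinate, say position $t$, with $w_1(t) = a$ and $w_0(t) = b$. From $P_{w_1} - P_{w_0} = (1_\mathcal{F})_{w_1} - (1_\mathcal{F})_{w_0} = 1$ and Proposition~\ref{prop:hammingE1}, nearly all terms in the two sums cancel because $w_1$ and $w_0$ agree off position $t$, leaving
\[
|\mathcal{F}\!\!\downarrow_{(t,a)}\!\!| - |\mathcal{F}\!\!\downarrow_{(t,b)}\!\!| = q^{n-1}.
\]
Since every $x \in \mathcal{F}\!\!\downarrow_{(t,a)}$ has $x(t) = a \ne b$, this already gives $|\mathcal{F}\!\!\downarrow_{(t,a)}\!\!| = q^{n-1}$ and $|\mathcal{F}\!\!\downarrow_{(t,b)}\!\!| = 0$ once we show $|\mathcal{F}\!\!\downarrow_{(t,a)}\!\!| \le q^{n-1}$; but $|\mathcal{F}\!\!\downarrow_{(t,a)}\!\!| \le |\mathcal{F}| \le q^{n-1}$ is immediate. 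Hence $|\mathcal{F}\!\!\downarrow_{(t,a)}\!\!| = q^{n-1} = |\mathcal{F}|$, so $\mathcal{F} = \mathcal{F}\!\!\downarrow_{(t,a)} = \mathcal{F}_{(t,a)}$ is canonically intersecting.

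I expect the only genuine subtlety — the analogue of the ``$|\mathcal{F}\!\!\downarrow_j| \ge \binom{n-2}{k-2}$ otherwise contradiction'' step in the subset proof — to be essentially trivial here, because in the Hamming setting the fibre $\mathcal{F}\!\!\downarrow_{(t,a)}$ of a maximum family already has the full size $q^{n-1}$ as soon as the cancellation identity is in hand, so no auxiliary combinatorial inequality (and in particular nothing like Claim~1 from the subspace case) is needed. The main things to be careful about are: getting the sign and index conventions for $P_n(i)$ right so that the eigenvalue extraction from Proposition~\ref{prop:hamming} is clean; confirming that $q \ge 3$ is used only to ensure the $1$-related pair $w_1 \in \mathcal{F}$, $w_0 \notin \mathcal{F}$ can be chosen with $w_0$ genuinely outside $\mathcal{F}$ (connectivity of $A_1$ plus $\mathcal{F} \ne \mathcal{X}$ handles this, and $\mathcal{F} \ne \mathcal{X}$ holds since $q^{n-1} < q^n$); and noting that the proof of the bound itself is standard and may simply be cited to Moon~\cite{Moon82}.
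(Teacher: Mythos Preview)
Your proof is correct and follows the paper's approach essentially verbatim: ratio bound to place $1_\mathcal{F} \in V_0 \oplus V_1$, then the local cancellation from a $1$-related pair $w_1 \in \mathcal{F}$, $w_0 \notin \mathcal{F}$ to force $|\mathcal{F}\!\!\downarrow_{(t,a)}| = q^{n-1}$ and hence $\mathcal{F} = \mathcal{F}_{(t,a)}$. One correction to your side remark: the hypothesis $q \ge 3$ is not needed for connectivity of $A_1$ (which holds for all $q \ge 2$) but rather because for $q=2$ one has $P_n(i) = (-1)^i$, so the minimum eigenvalue is attained on \emph{every} odd-indexed eigenspace and equality in the ratio bound no longer confines $1_\mathcal{F}$ to $V_0 \oplus V_1$.
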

\begin{proof}
	The independent sets of $A_n \in \mathcal{H}(n,q)$ are precisely the intersecting families of $\mathcal{X}$. By Proposition~\ref{prop:hamming}, we have $P_n(i) = (-1)^j(q-1)^{n-j}$ for all $0 \leq i \leq n$. By the ratio bound,
	\[
	|\mathcal{F}| \leq  q^n \frac{(q-1)^{n-1}}{(q-1)^n + (q-1)^{n-1}}  = q^{n-1} = |\mathcal{F}_{(i,j)}|, \text{ thus }
	\] 
	$1_{\mathcal{F}} \in V_0 \oplus V_1$ for any maximum intersecting $\mathcal{F}$. We now show $\mathcal{F}$ is canonically intersecting. 
	
	Let $P = E_0 + E_1$ and $P_w := (P1_\mathcal{F})_w$ for all $w \in \mathcal{X}$. 
	Since $A_1$ is connected, pick $w_1 \in \mathcal{F}$ and $w_0 \notin \mathcal{F}$ to be 1-related. By Proposition~\ref{prop:hammingE1} and the fact that $P_w = (1_\mathcal{F})_w$, we have
	\[
	q^{n-1} \left( P_{w_1} - P_{w_0}\right) =  \sum_{i=1}^n |\mathcal{F} \!\! \downarrow_{(i,w_1(i))} \!\!| 
	-  \sum_{i=1}^n|\mathcal{F} \!\! \downarrow_{(i,w_0(i))} \!\! | = q^{n-1}.
	\]
	Since $w_1$ and $w_0$ are 1-related, all but two terms cancel, i.e.,
	$
	|\mathcal{F} \!\! \downarrow_{(i,j)} \!\! | -  |\mathcal{F}\!\! \downarrow_{(i,j')} \!\! | = q^{n-1}
	$. We conclude that $\mathcal{F} = \mathcal{F}_{(i,j)}$ since $ q^{n-1} = |\mathcal{F}| \geq |\mathcal{F} \!\! \downarrow_{(i,j)} \!\! | \geq q^{n-1}$.
\end{proof}
\noindent It is easy to see that the proof above actually gives a stronger result that is well-known.
\begin{corollary}
	The only families $\mathcal{F} \subseteq [q]^n$ such that $1_\mathcal{F} \in V_0 \oplus V_1$ and $|\mathcal{F}| = q^{n-1}$ are the canonically intersecting families. Moreover, if $|\mathcal{F}| < q^{n-1}$, then $1_\mathcal{F} \notin V_0 \oplus V_1$.
\end{corollary}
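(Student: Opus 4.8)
The plan is to observe that the proof of the preceding theorem used the intersecting hypothesis (together with the ratio bound) only to deduce the two facts $1_\mathcal{F}\in V_0\oplus V_1$ and $|\mathcal{F}|\le q^{n-1}$; everything after that was a purely combinatorial manipulation of the identity $(E_0+E_1)1_\mathcal{F}=1_\mathcal{F}$. So for the corollary I would simply re-run that tail of the argument, now taking $1_\mathcal{F}\in V_0\oplus V_1$ as the hypothesis rather than deriving it. (Conversely, a canonically intersecting family $\mathcal{F}_{(i,j)}$ has $1_{\mathcal{F}_{(i,j)}}\equiv x_{(i,j)}$, a degree-$1$ function, hence lies in $V_0\oplus V_1$, and has size $q^{n-1}$; so the first assertion is really a characterization.)

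Concretely: assume $\mathcal{F}\neq\emptyset$ (the empty family is a degenerate exception, since $1_\emptyset=0$ lies in every subspace, and I would note this explicitly) and $1_\mathcal{F}\in V_0\oplus V_1$, so that with $P_w:=((E_0+E_1)1_\mathcal{F})_w$ we have $P_w=(1_\mathcal{F})_w$ for every $w$. If $\mathcal{F}$ is also proper, then since $A_1$ is connected we may choose $1$-related $w_1\in\mathcal{F}$ and $w_0\notin\mathcal{F}$; these agree off a single coordinate $i$, say $w_1(i)=j\neq j'=w_0(i)$. By Proposition~\ref{prop:hammingE1} and the fact that $P_{w_1}-P_{w_0}=1$, all but two terms in the resulting identity cancel and we obtain
\[
|\mathcal{F}\!\!\downarrow_{(i,j)}\!\!|-|\mathcal{F}\!\!\downarrow_{(i,j')}\!\!|=q^{n-1}.
\]
If $|\mathcal{F}|=q^{n-1}$, then $|\mathcal{F}\!\!\downarrow_{(i,j)}\!\!|\le|\mathcal{F}|=q^{n-1}$ and $|\mathcal{F}\!\!\downarrow_{(i,j')}\!\!|\ge0$ force equality throughout, so $\mathcal{F}=\mathcal{F}\!\!\downarrow_{(i,j)}\!\!=\mathcal{F}_{(i,j)}$ is canonically intersecting. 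If instead $0<|\mathcal{F}|<q^{n-1}$, then $\mathcal{F}$ is in particular proper, the same identity applies, but $|\mathcal{F}\!\!\downarrow_{(i,j)}\!\!|\le|\mathcal{F}|<q^{n-1}$ together with $|\mathcal{F}\!\!\downarrow_{(i,j')}\!\!|\ge0$ contradicts it; hence no such $\mathcal{F}$ has $1_\mathcal{F}\in V_0\oplus V_1$.

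I do not expect a genuine obstacle here: the substantive work is already contained in the proof of the theorem above, and the only points requiring care are the bookkeeping of which hypotheses each step actually uses and the handling of the empty family. If desired, I would append a remark that the same observation upgrades the uniqueness statements in the earlier sections in an identical fashion, namely that membership in $V_0\oplus V_1$ together with the extremal size already forces $\mathcal{F}$ to be a star, independently of the intersecting hypothesis.
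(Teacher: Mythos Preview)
Your proposal is correct and is exactly the approach the paper intends: the paper does not give a separate proof of the corollary but simply remarks that ``the proof above actually gives a stronger result,'' which is precisely your observation that the intersecting hypothesis was used only to place $1_\mathcal{F}$ in $V_0\oplus V_1$, after which the remaining steps go through verbatim. Your explicit handling of the empty family is a reasonable addendum, since the ``moreover'' clause as literally stated fails for $\mathcal{F}=\emptyset$.
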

\noindent The proof of uniqueness above does not hold for $q = 2$ as the graph $A_n$ is a perfect matching with spectrum $\{\pm 1^{2^{n-1}}\}$, but it is trivial to characterize the extremal families in this case.

\section{Bilinear Forms}

Let $\mathcal{X} = \mathbb{F}^{m \times n}_q$ be the set of all $m \times n$ matrices with $m < n$ and entries in $\mathbb{F}_q$, which can be identified with the set of all $m$-dimensional subspaces of $\mathbb{F}_q^{m+n}$ that are skew to a fixed $n$-dimensional subspace $W \leq \mathbb{F}_q^{m+n}$ (see~\cite{Delsarte78} or~\cite[Ch. 9.11]{GodsilMeagher} for more details). Let $\Sigma$ be the set of 1-dimensional subspaces of $\mathbb{F}_q^{m+n}$ skew to $W$. The \emph{bilinear forms scheme} $\mathcal{B}_q(m,n)$ is defined such that $A,B \in \mathcal{X}$ (as matrices) are $i$-related if $\text{rank}(A-B) = i$. This association scheme can be seen as a $q$-analogue of the Hamming scheme.

\begin{proposition}\emph{\cite{Delsarte78}}\label{prop:bilinear} Let $P$ be the character table of $\mathcal{B}_q(m,n)$. For all $0 \leq i,j \leq m$, we have
	\[
		P_j(i) = \sum_{k = 0}^m (-1)^{j-k} q^{kn + \binom{j-k}{2}} {m-k \brack m-j } { m -i \brack k}\quad \text{and} \quad P_i(j) = \frac{v_i}{v_j}P_j(i).
	\]
\end{proposition}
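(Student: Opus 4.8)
The plan is to use that $\mathcal{B}_q(m,n)$ is a \emph{translation scheme} on the additive group $G=(\mathbb{F}_q^{m\times n},+)$, whose $j$-th class is the set of rank-$j$ matrices, so that its eigenvalues are character sums. Fix a nontrivial additive character $\psi$ of $\mathbb{F}_q$; the characters of $G$ are $\chi_M(X)=\psi(\mathrm{tr}(M^{\top}X))$ for $M\in\mathbb{F}_q^{m\times n}$. The group $\mathrm{GL}_m\times\mathrm{GL}_n$ acts on $\mathcal{X}$ by $X\mapsto AXB$ as a group of automorphisms of the scheme, transitively on each rank class, and the induced action on characters sends $\chi_M$ to $\chi_{A^{\top}MB^{\top}}$, so its orbits are exactly the rank classes of $M$. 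Since there are $m+1$ such orbits and $m+1$ common eigenspaces of the Bose--Mesner algebra, and every automorphism fixes each eigenspace while permuting the character lines, the orbits must coincide with the eigenspaces; in particular the eigenspace $V_j$ is spanned by the $\chi_M$ with $\mathrm{rank}(M)=j$, whence $m_j=v_j$. Consequently $P_j(i)=\sum_{\mathrm{rank}(X)=j}\psi(\mathrm{tr}(M^{\top}X))$ for any fixed $M$ with $\mathrm{rank}(M)=i$.

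First I would reduce, via the $\mathrm{GL}_m\times\mathrm{GL}_n$ action (which merely permutes the rank-$j$ matrices), to the case where $M$ has an $i\times i$ identity in its top-left corner and zeros elsewhere, so that $W:=(\mathrm{colsp}\,M)^{\perp}$ has dimension $m-i$. Next I would evaluate, for a subspace $V\le\mathbb{F}_q^m$, the auxiliary sum over matrices whose column space lies in $V$: writing such an $X$ column by column and using orthogonality of $\psi$,
\[
T(V):=\sum_{\mathrm{colsp}(X)\subseteq V}\psi(\mathrm{tr}(M^{\top}X))=\prod_{b=1}^{n}\Bigl(\sum_{v\in V}\psi(\langle M_{\cdot b},v\rangle)\Bigr)=q^{n\dim V}\,[\,V\subseteq W\,].
\]
Then Möbius inversion over the subspace lattice of $\mathbb{F}_q^m$, whose Möbius function is $\mu(U,V)=(-1)^{\dim V-\dim U}q^{\binom{\dim V-\dim U}{2}}$, gives $\sum_{\mathrm{colsp}(X)=V}\psi(\cdots)=\sum_{U\subseteq V}\mu(U,V)T(U)$. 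Summing over all $j$-dimensional $V$, keeping only $U\subseteq W$, and counting pairs $U\subseteq V$ with $\dim U=k$ — there are ${m-i\brack k}_q$ choices of such $U$ inside $W$ and ${m-k\brack j-k}_q$ choices of $V\supseteq U$ of dimension $j$ — yields
\[
P_j(i)=\sum_{k=0}^{j}(-1)^{j-k}q^{nk+\binom{j-k}{2}}{m-i\brack k}_q{m-k\brack j-k}_q,
\]
which is the asserted formula once one rewrites ${m-k\brack j-k}_q={m-k\brack m-j}_q$ and notes that the terms with $k>j$ or $k>m-i$ vanish (so the sum may run to $k=m$). The relation $P_i(j)=\frac{v_i}{v_j}P_j(i)$ then follows from the general identity $m_j P_i(j)=v_i Q_j(i)$ together with the formal self-duality $P=Q$ of the bilinear forms scheme (classical; it extends the Hamming scheme) and the equality $m_j=v_j$ established above; alternatively it can be checked directly from the displayed formula using $v_j={m\brack j}_q\prod_{\ell=0}^{j-1}(q^{n}-q^{\ell})$.

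I expect the main obstacle to be organizational rather than substantive: justifying that the $\mathrm{GL}_m\times\mathrm{GL}_n$-orbits on characters really are the full eigenspaces (a transitivity-plus-dimension count), getting the Möbius function of the $q$-subspace lattice and the resulting double sum correct, and keeping the two index conventions ($P_j(i)$ versus $P_i(j)$) and the Gaussian-binomial re-indexing straight. This is in essence Delsarte's original computation, so no individual step is deep, but the $q$-binomial bookkeeping is where errors are easy to make.
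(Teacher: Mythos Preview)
Your argument is correct. Note, though, that the paper does not prove this proposition at all: it is simply quoted from Delsarte~\cite{Delsarte78}, so there is no in-paper proof to compare against. What you have written is essentially Delsarte's original computation --- realize $\mathcal{B}_q(m,n)$ as a translation scheme, identify the eigenspaces with the rank classes of characters via the $\mathrm{GL}_m\times\mathrm{GL}_n$ action, evaluate the partial sum $T(V)=q^{n\dim V}\,[V\subseteq W]$ column by column, and then M\"obius-invert over the subspace lattice. The double count of pairs $U\subseteq V$ with $U\subseteq W$, $\dim U=k$, $\dim V=j$ is correct and gives exactly the stated formula after the harmless re-indexing ${m-k\brack j-k}_q={m-k\brack m-j}_q$.

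For the second relation $P_i(j)=\tfrac{v_i}{v_j}P_j(i)$, your route through formal self-duality and $m_j=v_j$ is fine, but there is an even shorter direct argument available from what you have already set up: since the pairing $(M,X)\mapsto\psi(\mathrm{tr}(M^{\top}X))$ is symmetric in $M$ and $X$, the double sum $\sum_{\mathrm{rank}\,M=i}\sum_{\mathrm{rank}\,X=j}\psi(\mathrm{tr}(M^{\top}X))$ equals both $v_i\,P_j(i)$ (summing over $M$ first) and $v_j\,P_i(j)$ (summing over $X$ first). One cosmetic remark: the induced action on characters is $\chi_M\mapsto\chi_{(A^{\top})^{-1}M(B^{\top})^{-1}}$ rather than $\chi_{A^{\top}MB^{\top}}$, but this does not affect the orbit structure, so the argument is unchanged.
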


\begin{proposition}\label{prop:bilinearE1} For any $\mathcal{F} \subseteq \mathcal{X}$ and $A \in \mathcal{X}$, we have
	\[
		(E_11_{\mathcal{F}})_A = \frac{1}{q^{(m-1)n}} \left( \sum_{\ell \in A} |\mathcal{F}\!\!\downarrow_\ell\!\!| - \frac{[m]}{q^n}|\mathcal{F}| \right). 
	\]
\end{proposition}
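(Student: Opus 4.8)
The plan is to mirror the computation used for $E_1$ of the Johnson, Grassmann, and Hamming schemes: first derive a closed form for the entries of $E_1$ of $\mathcal{B}_q(m,n)$ in terms of the rank of $A-B$, and then convert the resulting quadratic form into the stated combinatorial expression via double counting. Concretely, I would write $E_1 = \tfrac{1}{|\mathcal{X}|}\sum_{i=0}^m Q_1(i) A_i$, use the identity $Q_1(i) = m_1 P_i(1)/v_i$, and plug in the expression for $P_i(1)$ (equivalently $P_1(i)$ together with $P_i(j) = (v_i/v_j)P_j(i)$) from Proposition~\ref{prop:bilinear}. Here $|\mathcal{X}| = q^{mn}$, $m_1 = \dim V_1$, and $v_i = {m \brack i}_q \prod_{s=0}^{i-1}(q^n - q^s)$ is the number of $m\times n$ matrices of rank $i$; the degree-$1$ multiplicity should come out to $m_1 = [m]_q(q^n-1) = [m]_q[n]_q \cdot \tfrac{q-1}{1}\cdots$ — in any case it is the number of rank-$1$ matrices, $v_1 = [m]_q(q^n-1)$, since the all-ones-row-sum/degree-$1$ eigenspace of this $q$-analogue of the Hamming scheme is spanned by the coordinate functions. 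The key algebraic step is to simplify $\sum_i Q_1(i) A_i$ so that the coefficient of $A_i$ becomes an affine function of $i$; by analogy with the Hamming case (coefficient $\propto (n/q - i)$ up to scaling) I expect the coefficient of $A_i$ to be proportional to $([m]_q/q^{\,?} - $ something linear in $i)$, ultimately giving $(E_1)_{A,B} = q^{-(m-1)n}\big((\text{number of lines of }A\text{ in }B) - [m]_q q^{-n}\big)$ after the dust settles, where "lines of $A$ in $B$" should coincide with $m - \operatorname{rank}(A-B)$ weighted appropriately.

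The second step is the double-counting identity analogous to the one at the end of the proof of Proposition~\ref{prop:johnsonE1}: once $(E_1)_{A,B}$ is known to be an affine function of $|A \cap B|$ (the number of common $1$-dimensional subspaces of $A$ and $B$, viewing $A,B$ as $m$-subspaces of $\mathbb{F}_q^{m+n}$ skew to $W$), we get
\[
(E_1 1_{\mathcal{F}})_A = c \Big( \sum_{B \in \mathcal{F}} |A \cap B| - \tfrac{[m]}{q^n}|\mathcal{F}| \Big) = c \Big( \sum_{\ell \in A} |\mathcal{F}\!\!\downarrow_\ell\!\!| - \tfrac{[m]}{q^n}|\mathcal{F}| \Big),
\]
by swapping the order of summation ($\sum_{B \in \mathcal{F}}|A\cap B| = \sum_{B\in\mathcal{F}}\sum_{\ell \in A}[\ell \in B] = \sum_{\ell \in A}|\mathcal{F}\!\!\downarrow_\ell\!\!|$), with $c = q^{-(m-1)n}$. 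So the content is entirely in pinning down the constant $c$ and the coefficient $[m]/q^n$, which is what the eigenvalue computation delivers; the $\mathcal{F}$-sum manipulation is then immediate.

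The main obstacle I anticipate is the bookkeeping in the eigenvalue substitution: the formula for $P_j(i)$ in Proposition~\ref{prop:bilinear} is a genuine alternating $q$-binomial sum, and isolating $P_i(1)$ (i.e.\ specializing the idempotent index to $1$) and then showing $\sum_{i} \tfrac{m_1 P_i(1)}{v_i} A_i$ collapses to an affine-in-$i$ coefficient requires a small $q$-binomial identity — the analogue of $\binom{k-1}{i}\binom{n-k-1}{i}/\big(\binom{k}{i}\binom{n-k}{i}\big) = 1 - \tfrac{ni}{k(n-k)}$ used in the Johnson case. I would either cite the relevant specialization from Delsarte~\cite{Delsarte78} (the $q$-Hahn / Eberlein-type polynomial values), or verify the identity directly for the single index $j=1$, where the sum has very few terms and the simplification should be short. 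I'd also double-check the normalization $q^{(m-1)n}$ against the known fact that the least eigenspace of $A_m$ has the canonical stars spanning $V_0\oplus V_1$ and that $|\mathcal{F}_\ell| = q^{(m-1)n}$, which forces the diagonal-type constant to be exactly $1/q^{(m-1)n}$. Modulo that one $q$-identity, the proof is a direct transcription of the earlier propositions.
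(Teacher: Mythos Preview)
Your plan is correct and is exactly the (omitted) argument the paper intends: the authors explicitly say the calculation for $E_1$ in the bilinear-forms case is ``entirely analogous'' to the Johnson computation, so deriving $Q_1(i)$ from Proposition~\ref{prop:bilinear}, simplifying to get a closed form for $(E_1)_{A,B}$, and then double-counting $\sum_{B\in\mathcal F}|A\cap B|=\sum_{\ell\in A}|\mathcal F\!\!\downarrow_\ell\!\!|$ is precisely what is expected.

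One point to tighten before you execute: in the $q$-setting the coefficient of $A_i$ in $E_1$ is \emph{not} affine in $i$ (as it was for Johnson and Hamming) but affine in $[m-i]_q$, the number of $1$-dimensional subspaces of $A\cap B$ when $\operatorname{rank}(A-B)=i$. Your target formula $(E_1)_{A,B}=q^{-(m-1)n}\big([m-i]_q-[m]_q/q^n\big)$ already has this right, and your phrase ``affine function of $|A\cap B|$'' is the correct statement; just be aware that the ``affine-in-$i$'' wording and the remark that the line count ``should coincide with $m-\operatorname{rank}(A-B)$'' are loose --- the count is $[m-\operatorname{rank}(A-B)]_q$, and the $q$-identity you need collapses $Q_1(i)$ to a linear function of $q^{m-i}$, not of $i$.
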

\noindent We say $\mathcal{F} \subseteq \mathcal{X}$ is \emph{intersecting} if $A-B$ is not full rank for all $A,B \in \mathcal{F}$ as matrices, or equivalently, $\dim A \cap B \neq 0$ for all $A,B \in \mathcal{F}$ as subspaces. Here, we are only able to show a weaker version of the EKR theorem for  bilinear forms for $m \leq \lceil n/2 \rceil$ rather than $m < n$.
\begin{theorem}\emph{\cite{Huang87}} Let $m \leq \lceil n/2 \rceil$. If $\mathcal{F} \subseteq \mathcal{X}$ is intersecting, then $|\mathcal{F}| \leq q^{(m-1)n}$. Moreover, equality holds if and only if $\mathcal{F}$ is canonically intersecting.
\end{theorem}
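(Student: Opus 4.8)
The plan is to follow the template established by the three preceding proofs, borrowing the ``skew-atom'' refinement used in the Grassmann case. First I would record that the independent sets of $A_m \in \mathcal{B}_q(m,n)$ are precisely the intersecting families of $\mathcal{X}$, and that setting $j = m$ in Proposition~\ref{prop:bilinear} gives $P_m(i) = \sum_{k=0}^{m} (-1)^{m-k} q^{kn + \binom{m-k}{2}} {m-i \brack k}$, whose least value is attained at $i = 1$. Feeding $\lambda_{\max} = P_m(0) = v_m$ and $\lambda_{\min} = P_m(1)$ into the ratio bound yields $|\mathcal{F}| \le q^{(m-1)n} = |\mathcal{F}_\ell|$, so that $1_{\mathcal{F}} \in V_0 \oplus V_1$ for any maximum intersecting $\mathcal{F}$. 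This portion is a routine eigenvalue manipulation paralleling the Hamming case, which I would state briefly; the remaining task is to show $\mathcal{F} = \mathcal{F}_\ell$ for some atom $\ell$.

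Next, writing $P := E_0 + E_1$, $P_A := (P 1_{\mathcal{F}})_A$, and using $P_A = (1_{\mathcal{F}})_A$, I would invoke primitivity of the scheme to pick $1$-related $A_1 \in \mathcal{F}$ and $A_0 \notin \mathcal{F}$, and apply Proposition~\ref{prop:bilinearE1} to obtain $\sum_{\ell \in A_1} |\mathcal{F}\!\downarrow_{\ell}| - \sum_{\ell \in A_0} |\mathcal{F}\!\downarrow_{\ell}| = q^{(m-1)n}$. Since $\text{rank}(A_1 - A_0) = 1$, the subspace $U := A_1 \cap A_0$ has dimension $m - 1$; the atoms contained in both $A_1$ and $A_0$ are exactly those of $U$, so they cancel, and by Proposition~\ref{prop:skew} each of $A_1, A_0$ contains exactly $q^{m-1}$ atoms skew to $U$. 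Restricting both sums to these sets $X_1, X_0$ of skew atoms, discarding the negative terms, and averaging over the $q^{m-1}$ elements of $X_1$ produces an atom $\ell$ with $|\mathcal{F}\!\downarrow_{\ell}| \ge q^{(m-1)n}/q^{m-1} = q^{(m-1)(n-1)}$.

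It then remains to deduce $\mathcal{F} = \mathcal{F}_\ell$. Since $|\mathcal{F}| = q^{(m-1)n} = |\mathcal{F}_\ell|$ is maximum, if $\mathcal{F} \ne \mathcal{F}_\ell$ there is some $A' \in \mathcal{F}$ with $\ell \not\subseteq A'$, and $A'$ must satisfy $\dim(A \cap A') \ge 1$ for every $A \in \mathcal{F}\!\downarrow_{\ell}$; hence $q^{(m-1)(n-1)} \le |\mathcal{F}\!\downarrow_{\ell}| \le |\mathcal{N}|$ where $\mathcal{N} := \{A \in \mathcal{X} : \ell \subseteq A,\ \dim(A \cap A') \ge 1\}$. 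Passing to the quotient $\mathbb{F}_q^{m+n}/\ell$ identifies $\mathcal{F}_\ell$ with the $(m-1)$-dimensional subspaces of that space skew to the image $\bar{W}$ of $W$, and translates $\dim(A \cap A') \ge 1$ into $\dim(\bar{A} \cap \bar{A'}) \ge 1$, where $\bar{A'}$ is $m$-dimensional. A union bound over the one-dimensional subspaces of $\bar{A'}$ not contained in $\bar{W}$ — of which there are fewer than $q^m$ — each extending to exactly $q^{(m-2)n}$ subspaces skew to $\bar{W}$, then gives $|\mathcal{N}| < q^{m} q^{(m-2)n} = q^{mn - 2n + m}$. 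Since the hypothesis $m \le \lceil n/2 \rceil$ is equivalent to $2m \le n + 1$, this forces $q^{mn - 2n + m} \le q^{mn - m - n + 1} = q^{(m-1)(n-1)}$, contradicting the lower bound on $|\mathcal{F}\!\downarrow_{\ell}|$. Hence no such $A'$ exists and $\mathcal{F} = \mathcal{F}_\ell$.

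I expect the main obstacle to be this final counting step: producing a sufficiently sharp bound on $|\mathcal{N}|$ through the quotient construction and checking that the resulting inequality closes precisely when $m \le \lceil n/2 \rceil$ — which is presumably why the theorem is proved only in that range rather than for all $m < n$ as in~\cite{Huang87}. A secondary point needing care is confirming that $\lambda_{\min} = P_m(1)$ and that the ratio bound simplifies to exactly $q^{(m-1)n}$.
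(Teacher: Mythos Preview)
Your proposal is correct and tracks the paper's proof almost step for step: the ratio bound, the choice of a $1$-related pair $B_1\in\mathcal{F}$, $B_0\notin\mathcal{F}$, the cancellation down to the $q^{m-1}$ skew atoms on each side via Proposition~\ref{prop:skew}, and the averaging to produce an atom $\ell$ with $|\mathcal{F}\!\!\downarrow_\ell|\ge q^{(m-1)(n-1)}$ are all exactly what the paper does.

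The only divergence is in the final contradiction. The paper invokes an exact count from the literature (\cite[Lemma~2.3]{GongLW17}): the number of $B\in\mathcal{X}$ containing $\ell$ and meeting a fixed $B'$ skew to $\ell$ equals $q^{(m-1)n}\bigl(1-\prod_{i=1}^{m-1}(1-q^{i}/q^{n})\bigr)$, and then argues analytically (in the style of Claim~1) that this is below $q^{(m-1)(n-1)}$ when $m\le\lceil n/2\rceil$. Your quotient-plus-union-bound argument is a cruder but entirely self-contained substitute: it avoids the external citation and the product estimate, at the price of a weaker upper bound $q^{m}q^{(m-2)n}$ on $|\mathcal{N}|$. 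Since $2m\le n+1$ is exactly the hypothesis, your bound closes in precisely the same range, so nothing is lost. One small wording point: the quotient does not literally ``translate'' $\dim(A\cap A')\ge1$ into $\dim(\bar A\cap\bar{A'})\ge1$ as an equivalence---only the forward implication holds in general---but the forward implication is all your upper bound needs.
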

\begin{proof}
	The independent sets of $A_m \in \mathcal{B}_q(m,n)$ are precisely the intersecting families of $\mathcal{X}$. By Proposition~\ref{prop:bilinear}, we have $P_m(i) = (v_m/v_i)(-1)^i {m \brack i}$ for all $0 \leq i \leq m$. By the ratio bound 
	\[
		|\mathcal{F}| \leq q^{mn} \frac{[m]_qv_m/v_1}{ v_m + [m]_qv_m/v_1}  = q^{mn} \frac{[m]_q}{ (q^n-1)[m]_q + [m]_q}  = q^{(m-1)n} = |\mathcal{F}_{\ell}|,
	\] 
	thus $1_{\mathcal{F}} \in V_0 \oplus V_1$ for any maximum intersecting $\mathcal{F}$. We now show $\mathcal{F}$ is canonically intersecting. Let $P = E_0 + E_1$ and define $P_B := (P1_\mathcal{F})_B$ for all $B \in \mathcal{X}$.
	Since $A_1$ is connected, let $B_1 \in \mathcal{F}$ and $B_0 \notin \mathcal{F}$ be 1-related. Proposition~\ref{prop:bilinearE1} and the fact that $P_B = (1_\mathcal{F})_B$ implies
	\[
	q^{(m-1)n} \left( P_{B_1} - P_{B_0} \right) =  \sum_{\ell \in B_1} |\mathcal{F} \!\! \downarrow_{\ell } \! | 
	-  \sum_{\ell \in B_0} |\mathcal{F} \!\! \downarrow_{\ell } \! | =  q^{(m-1)n}.
	\]
	Since $B_1$ and $B_0$ are 1-related, by Proposition~\ref{prop:skew} we may write
	$$
	\sum_{\ell \in X_1} |\mathcal{F} \!\! \downarrow_{\ell} \!\! | -  \sum_{\ell' \in X_0}|\mathcal{F}\!\! \downarrow_{\ell'} \!\! | = q^{(m-1)n}
	$$
	where $X_1,X_0 \subseteq \Sigma$ are the set of 1-dimensional subspaces of $B_1$ and $B_0$ that are skew to the $(m-1)$-dimensional space $B_1 \cap B_0$. Dropping negative terms and averaging shows there exists an $\ell \in \Sigma$ such that
	$$
		|\mathcal{F} \!\! \downarrow_{\ell} \!\! |\geq q^{(m-1)(n-1)}.
	$$
	Suppose there exists a $B' \in \mathcal{F} \setminus \mathcal{F} \!\! \downarrow_{\ell}$. Since $\mathcal{F}$ is intersecting, each member of $\mathcal{F} \!\! \downarrow_{\ell}$ must intersect $B'$. The total number of $B \in \mathcal{X}$ that contain a fixed $\ell \in \Sigma$ and also intersect a fixed $B' \in \mathcal{X}$ that is skew to $\ell$ equals $q^{(m-1)n} \left( 1- \left(\prod_{i=1}^{m-1} 1- q^{i}/q^n \right)\right)$, see~\cite[Lemma 2.3]{GongLW17} for a proof. Since $m \leq \lceil n/2 \rceil$, using arguments similar to Claim 1, we have 
	\[
		 |\mathcal{F}\!\!\downarrow_{\ell}\!\!| \leq q^{(m-1)n} \left( 1- \left(\prod_{i=1}^{m-1} 1-\frac{q^{i}}{q^n} \right)\right) = q^{(m-1)n} \left( 1- \left(\prod_{i=1}^{m-1} \frac{q^{n-i}-1}{q^{n-i}} \right)\right)  < q^{(m-1)(n-1)}.
	\]
This contradicts the fact that $|\mathcal{F} \!\! \downarrow_{\ell} \!\! |\geq q^{(m-1)(n-1)}$, thus $\mathcal{F} \setminus \mathcal{F} \!\! \downarrow_{\ell}$ is empty, and so $\mathcal{F}$ must be canonically intersecting.
\end{proof}
\noindent It is likely that similar proofs hold for other sesquilinear forms (e.g., Hermitian, Alternating) whose characters are specializations of the \emph{generalized Krawtchouk polynomials}~\cite{Delsarte78}.

\section{Permutations}

Let $\mathcal{X}$ be the \emph{symmetric group} on $n$ symbols with $\Sigma = [n] \times [n]$, so that $\sigma = \{(i,\sigma(i))\}_{i=1}^n$ for all $\sigma \in \mathcal{X}$.
Recall for any irreducible representation $\rho$ of a finite group $G$ that the orthogonal projection $E_\rho$ onto the $\rho$-isotypic component of its group algebra can be written in matrix form as

\[
	(E_\rho)_{h,g} = \frac{\dim \rho}{|G|} \chi^{\rho}( gh^{-1}) \quad \text{ for all } h,g \in G
\]
where $\chi^\rho$ is the irreducible character of $\rho$. Basic character theory of $S_n$ shows that the orthogonal projection $E_1 := E_{(n-1,1)}$ onto the $(n-1,1)$-isotypic component $V_1$ is
\[
	(E_1)_{\sigma, \pi} = \frac{n-1}{n!} \chi^{(n-1,1)}( \pi \sigma^{-1}) = \frac{n-1}{n!}\left( \text{fp}( \pi \sigma^{-1})  -1 \right)\quad \text{ for all } \sigma,\pi \in \mathcal{X}
\]
where $\text{fp}(\sigma)$ is the number of fixed points of $\sigma$. The following proposition is now immediate.
\begin{proposition}\label{prop:permutationE1} For all $\mathcal{F} \subseteq \mathcal{X}$ and $\sigma \in \mathcal{X}$, we have 
\[
	(E_11_{\mathcal{F}})_{\sigma} = \frac{n-1}{n!} \left( \sum_{i=1}^n |\mathcal{F} \! \downarrow_{(i,\sigma(i))} \!| -  |\mathcal{F}| \right).
\]
\end{proposition}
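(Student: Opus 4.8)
The plan is to unfold $(E_1 1_{\mathcal{F}})_\sigma$ coordinate-wise using the matrix expression for $E_1$ established immediately above, namely $(E_1)_{\sigma,\pi} = \frac{n-1}{n!}\left(\text{fp}(\pi\sigma^{-1}) - 1\right)$. Writing $(E_1 1_{\mathcal{F}})_\sigma = \sum_{\pi \in \mathcal{F}} (E_1)_{\sigma,\pi}$ and pulling out the constant, this equals $\frac{n-1}{n!}\left(\sum_{\pi \in \mathcal{F}} \text{fp}(\pi\sigma^{-1}) - |\mathcal{F}|\right)$, so the whole proposition reduces to the identity $\sum_{\pi \in \mathcal{F}} \text{fp}(\pi\sigma^{-1}) = \sum_{i=1}^n |\mathcal{F}\!\downarrow_{(i,\sigma(i))}\!|$.

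For that identity, the key point is a re-indexing: $\text{fp}(\pi\sigma^{-1})$ counts the $i \in [n]$ with $\pi(\sigma^{-1}(i)) = i$, and substituting $j = \sigma^{-1}(i)$ (so $i = \sigma(j)$) rewrites this as the number of $j \in [n]$ with $\pi(j) = \sigma(j)$, i.e., the number of coordinates on which $\pi$, viewed as the set $\{(j,\pi(j))\}_{j=1}^n$, contains the pair $(j,\sigma(j))$. Hence $\text{fp}(\pi\sigma^{-1}) = \sum_{i=1}^n \mathbf{1}\!\left[(i,\sigma(i)) \in \pi\right]$. Summing over $\pi \in \mathcal{F}$ and interchanging the two sums — a double-counting step in the same spirit as the one in the proof of Proposition~\ref{prop:johnsonE1} — yields $\sum_{\pi \in \mathcal{F}} \text{fp}(\pi\sigma^{-1}) = \sum_{i=1}^n \left|\{\pi \in \mathcal{F} : (i,\sigma(i)) \in \pi\}\right| = \sum_{i=1}^n |\mathcal{F}\!\downarrow_{(i,\sigma(i))}\!|$, which is exactly what is needed, and plugging back in gives the stated formula.

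There is no real obstacle here: once the matrix form of $E_1$ is in hand — which rests only on the standard fact $\chi^{(n-1,1)}(\tau) = \text{fp}(\tau) - 1$ together with the general projector identity $(E_\rho)_{h,g} = \frac{\dim\rho}{|G|}\chi^\rho(gh^{-1})$ — the proposition follows from the single substitution $j = \sigma^{-1}(i)$ and the subsequent interchange of summations, which is why the preceding text calls it ``immediate.'' The only thing worth writing out carefully is that re-indexing, so that the sum over fixed points of $\pi\sigma^{-1}$ is correctly converted into a sum over the coordinates on which $\pi$ and $\sigma$ agree.
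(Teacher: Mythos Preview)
Your proposal is correct and is exactly the argument the paper has in mind: the paper derives the entrywise formula $(E_1)_{\sigma,\pi} = \frac{n-1}{n!}(\text{fp}(\pi\sigma^{-1})-1)$ and then declares the proposition ``immediate,'' leaving to the reader precisely the substitution $j=\sigma^{-1}(i)$ and the swap of summations that you spelled out. There is nothing to add or correct.
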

\noindent We are now ready to prove the EKR theorem for intersecting families of permutations. 
\begin{theorem}
If $\mathcal{F} \subseteq \mathcal{X}$ is intersecting, then $|\mathcal{F}| \leq (n-1)!$. Moreover, equality holds if and only if $\mathcal{F}$ is canonically intersecting.
\end{theorem}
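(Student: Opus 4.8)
The plan is to follow the same template as the preceding proofs. For the bound, the relevant graph is the derangement graph $\Gamma_n$ on $\mathcal{X}=S_n$ (the Cayley graph whose connection set is the set of derangements), whose independent sets are exactly the intersecting families. Its largest eigenvalue is the number of derangements $d_n$, attained on $V_0$, and its least eigenvalue is $-d_n/(n-1)$, attained only on the $(n-1,1)$-isotypic component $V_1$; this is well known (see~\cite{GodsilMeagher}). The ratio bound then gives $|\mathcal{F}|\le n!\cdot\frac{d_n/(n-1)}{d_n+d_n/(n-1)}=(n-1)!$, and equality forces $1_{\mathcal{F}}\in V_0\oplus V_1$. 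It remains to prove uniqueness (the cases $n\le 2$ being trivial, assume $n\ge 3$).

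Suppose $|\mathcal{F}|=(n-1)!$, so that with $P=E_0+E_1$ we have $(P1_{\mathcal{F}})_\sigma=(1_{\mathcal{F}})_\sigma\in\{0,1\}$ for every $\sigma$. Write $c_{ij}:=|\mathcal{F}\!\downarrow_{(i,j)}\!|$ and $g(\sigma):=\sum_{i=1}^n c_{i,\sigma(i)}$. Since $E_0=J/n!$, Proposition~\ref{prop:permutationE1} turns the above into: $g(\sigma)=2(n-1)!$ for $\sigma\in\mathcal{F}$ and $g(\sigma)=(n-2)\,(n-2)!$ for $\sigma\notin\mathcal{F}$. Translating $\mathcal{F}$ by a fixed member (which preserves both being intersecting and being canonically intersecting), we may assume $\mathrm{id}\in\mathcal{F}$, so $\sum_i c_{ii}=2(n-1)!$. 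The local perturbations are transpositions: for a pair $\{a,b\}$ the transposition $(a\,b)\in S_n$ differs from $\mathrm{id}$ only at $a,b$, so $g((a\,b))=2(n-1)!-c_{aa}-c_{bb}+c_{ab}+c_{ba}$, which gives $c_{ab}+c_{ba}=c_{aa}+c_{bb}$ when $(a\,b)\in\mathcal{F}$ and $c_{ab}+c_{ba}=c_{aa}+c_{bb}-n\,(n-2)!$ when $(a\,b)\notin\mathcal{F}$. Let $H$ be the graph on $[n]$ whose edges are the pairs $\{a,b\}$ with $(a\,b)\notin\mathcal{F}$.

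The endgame is to show $H\cong K_{1,n-1}$. Using $\sum_j c_{ij}=\sum_i c_{ij}=|\mathcal{F}|=(n-1)!$ (double counting), summing the two displayed identities over all $\binom n2$ pairs yields $|E(H)|=n-1$, and summing $c_{ab}+c_{ba}$ over $b\ne a$ for fixed $a$ — once via the transposition identities, once via the row/column sums — yields $c_{aa}=(n-2)!\,\deg_H(a)$ for every $a$. Since $\mathrm{id}$ fixes every point, $c_{aa}\ge 1$, so $H$ has no isolated vertex; and for an edge $\{a,b\}$ of $H$ the identity $c_{ab}+c_{ba}=(n-2)!\,(\deg_H a+\deg_H b-n)\ge 0$ forces $\deg_H a+\deg_H b\ge n$. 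A short graph-theoretic lemma then finishes the argument: a simple graph on $n$ vertices with $n-1$ edges, no isolated vertex, and degree-sum at least $n$ on every edge must be $K_{1,n-1}$. Indeed, if such an $H$ were disconnected, an edge lying in a smallest component (of at most $\lfloor n/2\rfloor$ vertices) would have degree-sum at most $n-2<n$; so $H$ is a tree, and splitting the tree along an edge shows every edge has degree-sum at most $n$, with equality forcing $H$ to be a double star, which in turn collapses to a single star. Thus $H\cong K_{1,n-1}$ with some centre $i$, so $c_{ii}=(n-2)!\,(n-1)=(n-1)!=|\mathcal{F}|$, whence $\mathcal{F}=\{\sigma:\sigma(i)=i\}$; undoing the translation shows $\mathcal{F}$ is canonically intersecting.

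The main obstacle is the combinatorial endgame of the last paragraph: recognizing that the right object to track is the graph $H$ of transpositions missing from $\mathcal{F}$, extracting the clean formula $c_{aa}=(n-2)!\deg_H(a)$ from the perturbation identities and double counting, and then proving the star lemma. Everything before that is a mechanical transcription of Proposition~\ref{prop:permutationE1} and the template used in the earlier sections.
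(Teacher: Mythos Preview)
Your proof is correct and takes a genuinely different route from the paper's. Both arguments begin identically --- ratio bound on the derangement graph to get $1_{\mathcal F}\in V_0\oplus V_1$, then exploit Proposition~\ref{prop:permutationE1} via transposition perturbations --- but then diverge. The paper uses a \emph{single} 1-related pair to extract one large restriction $|\mathcal F\!\downarrow_{(i,j)}|\ge\tfrac12\bigl((n-1)!+(n-2)!\bigr)$, and then invokes Ellis's cross-intersecting bound $|\mathcal F\!\downarrow_{(1,1)}|\cdot|\mathcal F\!\downarrow_{(1,i)}|\le((n-2)!)^2$ together with a derangement-type count to finish, leaving $n<6$ to a computer check. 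You instead use \emph{all} transpositions at once: encoding ``$(a\,b)\notin\mathcal F$'' as an edge of a graph $H$ on $[n]$, double counting gives the exact identities $|E(H)|=n-1$ and $c_{aa}=(n-2)!\deg_H(a)$, and the constraint $c_{ab}+c_{ba}\ge 0$ along edges of $H$ forces $\deg_H a+\deg_H b\ge n$, after which a short tree argument pins $H$ down as $K_{1,n-1}$. Your approach is fully self-contained --- no external cross-intersecting lemma, no derangement asymptotics, no small-case verification --- and works uniformly for all $n\ge 3$; the price is a slightly longer combinatorial endgame. (Incidentally, your double-star step can be shortened: once $H$ is a tree with $\deg u+\deg v=n$ on every edge, any leaf forces its neighbour to have degree $n-1$, which already makes $H$ a star.)
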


\begin{proof}
There are a few different proofs that $|\mathcal{F}| = (n-1)!$ and $1_{\mathcal{F}} \in V_0 \oplus V_1$ for any maximum intersecting family $\mathcal{F} \subseteq \mathcal{X}$ (see~\cite{Renteln07,GodsilM09,Ellis12}). We show $\mathcal{F}$ must be canonically intersecting.

 Let $P = E_0 + E_1$ and define $P_\sigma := (P1_\mathcal{F})_\sigma$ for all $\sigma \in \mathcal{X}$. The transpositions $\tau$ generate $\mathcal{X}$, so let $\sigma_1 \in \mathcal{F}$ and $\sigma_0 \notin \mathcal{F}$ such that $\sigma_0 = \tau\sigma_1$.  By Proposition~\ref{prop:permutationE1} we may write 
\[
	P_{\sigma_1} - P_{\sigma_0} = 
	\frac{n-1}{n!}\left( \sum_{i=1}^n |\mathcal{F} \! \downarrow_{(i,\sigma_1(i))} \!| - \sum_{i=1}^n |	\mathcal{F} \! \downarrow_{(i,\sigma_0(i))} \!| \right)  = 1,
\]
which implies that
\[
	\sum_{i=1}^n |\mathcal{F} \! \downarrow_{(i,\sigma_1(i))} \!|  -  \sum_{i=1}^n |	\mathcal{F} \! \downarrow_{(i,\sigma_0(i))} \!|  = \frac{n!}{n-1} = (n-1)! + (n-2)!.
\]
Without loss of generality, let $\sigma_1 = ()$ and $\sigma_0 = (1,2)$. Then we have
\[
	\sum_{i=1}^2 |\mathcal{F} \! \downarrow_{(i,\sigma_1(i))} \!|  -  \sum_{i=1}^2 |	\mathcal{F} \! \downarrow_{(i,\sigma_0(i))} \!|  = (n-1)! + (n-2)!.
\]
After dropping negative terms and averaging, without loss of generality, we have
\[
	|\mathcal{F} \! \downarrow_{(1,1)} \!|  \geq  \frac{(n-1)! + (n-2)!}{2}.
\]
To finish the proof, we use a trick of Ellis~\cite{Ellis12} to boost the lower bound on $|\mathcal{F} \! \downarrow_{(1,1)} \!|$. 

Two families $\mathcal{F},\mathcal{F}'$ are \emph{cross-intersecting} if $\sigma$ and $\sigma'$ intersect for all $\sigma \in \mathcal{F}$ and $\sigma' \in \mathcal{F}'$.
Note that $\mathcal{F} \! \downarrow_{(1,1)}$ and $\mathcal{F} \! \downarrow_{(1,i)}$ are cross-intersecting since $\mathcal{F}$ is intersecting. It follows from \cite[Lemma 3.4]{Ellis12} that  
\[
	|\mathcal{F} \! \downarrow_{(1,1)}| \cdot  |\mathcal{F} \! \downarrow_{(1,i)}| \leq ((n-2)!)^2.
\]
We deduce that  $|\mathcal{F} \! \downarrow_{(1,i)}| \leq 2(n-2)!/n < 2(n-2)!/(n-1)$ for all $i \neq 1$, which implies that 
$$
| \mathcal{F} \setminus \mathcal{F} \downarrow_{(1,1) }  \!| = \sum_{i=2}^n | \mathcal{F} \downarrow_{(1,i) }\!| \leq  2(n-2)!.
$$

The foregoing shows that for any intersecting $\mathcal{F}$ there exists an $i,j \in [n]$ such that
\begin{align}\label{eq:lbperm}
	|\mathcal{F} \!\! \downarrow_{(i,j) }\!| \geq (n-1)! - 2(n-2)!.
\end{align}
Suppose there exists a $\sigma' \in \mathcal{F} \setminus \mathcal{F} \!\! \downarrow_{(i,j)}$. Note that every $\sigma \in \mathcal{F} \!\!\downarrow_{ (i,j) }$ must intersect $\sigma'$. The number of permutations of $\mathcal{X}$ that do not intersect a fixed permutation is $\lfloor n!/e \rceil$, the number of \emph{derangements}. The total number of $\sigma \in \mathcal{X}$ such that $\sigma(i) = j$ and $\sigma(k) = \pi(k)$ for some $k \neq i$ is $(n-1)! - \lfloor (n-1)!/e \rceil - \lfloor (n-2)!/e \rceil$ (see~\cite{Ellis12}, for example), thus
$$|\mathcal{F} \downarrow_{ (i,j) }\!| \leq (n-1)! - \lfloor (n-1)!/e \rceil - \lfloor (n-2)!/e \rceil.$$ 
This contradicts (\ref{eq:lbperm}) for all $n \geq 6$. Since $\mathcal{F} \setminus \mathcal{F} \!\! \downarrow_{(i,j)}$ is empty, we have that $\mathcal{F}$ is a canonically intersecting family. For $n < 6$ one can verify the theorem above by a simple computer search, which completes the proof.
\end{proof}
\noindent 
\section{Perfect Matchings}
Let $\mathcal{X}$ be the set of $(2n-1)!!$ perfect matchings of the complete graph $K_{2n} = ([2n],E)$ with $\Sigma = E$. Let $\lambda \vdash n$ be an integer partition of $n$. Let $\ell(\lambda)$ denote the number of parts of $\lambda \vdash n$. The associates and primitive idempotents of the \emph{perfect matching scheme} $\mathcal{A}$ are indexed by the set of integer partitions $\lambda  \vdash n$ and are defined such that two perfect matchings $m,m' \in \mathcal{X}$ are $\lambda$-related if the multiunion $m \cup m'$ is multigraph-isomorphic to the multigraph $\sqcup_{i=1}^{\ell(\lambda)} C_{2\lambda_i}$ where $C_{2\lambda_i}$ is the cycle on $2\lambda_i$ edges.
We say $m,m' \in \mathcal{X}$ are 1-related if they are $(2,1^{n-2})$-related. Let $E_1$ be the orthogonal projection onto the $(n-1,1)$-eigenspace $V_1$. See~\cite[Ch.~15]{GodsilMeagher} for more details on the perfect matching association scheme.
\begin{proposition}\emph{\cite{Lindzey20}}\label{prop:matchingE1} For all $\mathcal{F} \subseteq \mathcal{X}$ and $m \in \mathcal{X}$, we have
\[
	(E_11_{\mathcal{F}})_m = \frac{1}{(2n-5)!! 2(n-1)}\left(\sum_{ij \in m} | \mathcal{F}\!\! \downarrow_{ij}\!\! | - \frac{|\mathcal{F}|}{2(n-1)}\right).
\]
\end{proposition}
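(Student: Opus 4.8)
The plan is to pin down the entries of the idempotent $E_1$ in closed form, exactly as in Proposition~\ref{prop:johnsonE1}, and then conclude by a double count. Paralleling Section~\ref{sec:subsets}, one route is to expand $E_1 = \frac{1}{(2n-1)!!}\sum_{\lambda\vdash n} Q_1(\lambda)\,A_\lambda$ with $Q_1(\lambda) = m_1 P_\lambda(1)/v_\lambda$, substitute the eigenvalues $P_\lambda(1)$ of $A_\lambda$ on $V_1$ (these are values of the zonal spherical functions of the Gelfand pair underlying the perfect matching scheme, recorded in~\cite{Lindzey20}) together with $m_1 = \dim V_1$ and the valencies $v_\lambda$, and simplify. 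As in the Johnson case, the crucial feature is that after cancellation $Q_1(\lambda)$ becomes an \emph{affine} function of the number of parts of $\lambda$ equal to $1$ — equivalently, of the number of common edges $|m\cap m'|$ of two $\lambda$-related matchings $m,m'$ — so that $(E_1)_{m,m'}$ depends on $m,m'$ only through $|m\cap m'|$, and the proposition then follows by double counting.

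Since assembling the full spectrum of the scheme is the laborious part of that route, I would instead give the following shortcut, which uses only the eigenspace $V_1$. Let $W$ be the $\mathcal{X}\times E$ matrix with $W_{m,e}=1$ if $e\in m$ and $0$ otherwise, so that its columns are the characteristic vectors $1_{\mathcal{F}_e}$ of the canonically intersecting families. Then $(WW^\top)_{m,m'} = |m\cap m'|$ equals the number of parts equal to $1$ in the relation type of $m$ and $m'$, so $WW^\top\in\mathfrak{A}$. The $1_{\mathcal{F}_e}$ are degree-$1$ functions, and the column space of $W$ equals $V_0\oplus V_1$: the permutation module $\mathbb{C}^{\mathcal{X}}$ decomposes multiplicity-freely as $\bigoplus_{\lambda\vdash n}S^{2\lambda}$, the Johnson module $\mathbb{C}^{E}\cong\mathbb{C}^{\binom{[2n]}{2}}$ decomposes as $S^{(2n)}\oplus S^{(2n-1,1)}\oplus S^{(2n-2,2)}$, and of these constituents only $S^{(2n)}=V_0$ and $S^{(2n-2,2)}=V_1$ occur in $\mathbb{C}^{\mathcal{X}}$ (the eigenspace the scheme labels $(n-1,1)$ is, as an $S_{2n}$-module, the even-shaped irreducible $S^{2(n-1,1)}=S^{(2n-2,2)}$), while the $1_{\mathcal{F}_e}$ are visibly non-constant. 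Hence $WW^\top = \mu_0 E_0 + \mu_1 E_1$ for scalars $\mu_0,\mu_1$.

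It remains to evaluate $\mu_0$ and $\mu_1$ by elementary counting. Because $E_0 = J/(2n-1)!!$ has constant row sum $1$ and $E_1$ has row sums $0$, the scalar $\mu_0$ is the common row sum of $WW^\top$, namely $\sum_{e\in m}|\mathcal{X}\!\!\downarrow_e\!| = n(2n-3)!!$. Comparing traces, $\mathrm{Tr}(WW^\top) = \sum_{m}|m| = n(2n-1)!! = \mu_0 + \mu_1\dim V_1$, and a standard hook-length computation gives $\dim V_1 = \dim S^{(2n-2,2)} = n(2n-3)$, whence $\mu_1 = 2(n-1)(2n-5)!!$. Therefore $E_1 = \mu_1^{-1}\bigl(WW^\top - \mu_0 E_0\bigr)$; reading off the $(m,m')$-entry and applying the double-counting identity $\sum_{ij\in m}|\mathcal{F}\!\!\downarrow_{ij}\!| = \sum_{m'\in\mathcal{F}}|m\cap m'|$ yields $(E_1 1_\mathcal{F})_m$ in the stated form.

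The main obstacle is the structural step in the second paragraph: identifying the degree-$1$ eigenspace as the constituent $S^{(2n-2,2)}$ and confirming that no higher-degree constituent enters $\mathrm{col}(W)$ — that is, reconciling the $(S_{2n},\,S_2\wr S_n)$-decomposition of $\mathbb{C}^{\mathcal{X}}$ with the Johnson decomposition of $\mathbb{C}^{E}$ and observing that the odd-shaped constituent $S^{(2n-1,1)}$ is absent from $\mathbb{C}^{\mathcal{X}}$. Once this is secured, everything else — the two scalars $\mu_0,\mu_1$, the hook-length count, and the double count — is routine. If one prefers to stay strictly within the association-scheme formalism, one takes the first route instead, where the analogous obstacle is extracting a usable closed form for the eigenvalues $P_\lambda(1)$ of the perfect matching scheme and collapsing $\sum_{\lambda}Q_1(\lambda)A_\lambda$ to its affine-in-$|m\cap m'|$ shape.
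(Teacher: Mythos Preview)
Your approach is correct and genuinely different from the paper's. The paper does not prove Proposition~\ref{prop:matchingE1} in place (it cites~\cite{Lindzey20}), but the intended method --- per the remark at the end of Section~\ref{sec:prelims} and the Johnson template of Proposition~\ref{prop:johnsonE1} --- is to expand $E_1 = |\mathcal{X}|^{-1}\sum_\lambda Q_1(\lambda)A_\lambda$ from the scheme's eigenvalues; and in Section~\ref{sec:hypermatchings} the paper carries out the hypermatching analogue (Proposition~\ref{prop:hyperE1}) via an explicit spherical-function computation, noting afterward that $k=2$ recovers the present statement. Your $WW^\top$ argument sidesteps both the full character table and the spherical-function machinery: once the module-theoretic identification $\mathrm{col}\,W = V_0\oplus V_1$ is secured (comparing the constituents $S^{(2n)}\oplus S^{(2n-1,1)}\oplus S^{(2n-2,2)}$ of $\mathbb{C}^E$ against the even-shape decomposition $\bigoplus_{\lambda\vdash n}S^{2\lambda}$ of $\mathbb{C}^{\mathcal{X}}$), a row-sum and a trace pin down $\mu_0,\mu_1$ and hence $E_1$. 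This is shorter and more self-contained for $k=2$; the paper's spherical-function route is heavier but scales uniformly to all $k\ge 2$ and, as a byproduct, delivers $\omega^{(kn-2,2)}$ itself, which Section~\ref{sec:hypermatchings} then uses to read off eigenvalues of arbitrary elements of the orbital algebra.

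One small caveat: if you actually carry your computation through, the constant term comes out as $\frac{n}{2n-1}\,|\mathcal{F}|$, not $\frac{|\mathcal{F}|}{2(n-1)}$. (Sanity check: with $\mathcal{F}=\mathcal{X}$ one needs $\sum_{ij\in m}|\mathcal{X}\!\!\downarrow_{ij}\!|=n(2n-3)!!$ to equal the constant times $(2n-1)!!$, which forces $\frac{n}{2n-1}$; the same value falls out of Proposition~\ref{prop:hyperE1} at $k=2$.) This is a typo in the stated proposition rather than a gap in your argument, and the application in Theorem~\ref{thm:hamiltonian} is unaffected since the constant cancels in $P_{m_1}-P_{m_0}$; but ``in the stated form'' is not literally what your method produces.
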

\noindent It is well-known that the number of \emph{perfect matching derangements}, i.e., the number of $m \in \mathcal{X}$ that share no edges with some fixed $m' \in \mathcal{X}$, equals $\lfloor (2n-1)!!/\sqrt{e}~ \! \rceil$, which one may compare to permutation derangements. This count implies the following.
\begin{proposition}\label{prop:count} For any $ij \in \Sigma$ and $m' \in \mathcal{X}$ such that $ij \notin m'$, we have
	\[
		|\{ m \in \mathcal{X} : ij \in m \text{ and } m \cap m' \neq \emptyset \}| \leq (1-1/\sqrt{e})(2n-3)!! < \frac{2}{5}(2n-3)!!.
	\]
\end{proposition}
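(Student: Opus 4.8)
The plan is to convert this count into a ``perfect matching derangement'' count on $K_{2n-2}$ and then estimate the latter against the Taylor series of $e^{-1/2}$. For the reduction, fix $ij\in\Sigma$ and $m'\in\mathcal{X}$ with $ij\notin m'$, and let $a,b$ be the $m'$-partners of $i,j$; these are distinct vertices of $V':=[2n]\setminus\{i,j\}$. The matchings $m\ni ij$ correspond bijectively to the $(2n-3)!!$ perfect matchings of $K_{2n-2}$ on $V'$, and since $ij\in m$ already excludes the two edges $ia,jb$ of $m'$, such an $m$ meets $m'$ exactly when the corresponding matching of $K_{2n-2}$ shares an edge with $m'':=m'\cap\binom{V'}{2}$ --- a matching with $n-2$ edges covering all of $V'$ except $a$ and $b$. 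Hence the quantity in the proposition equals $(2n-3)!!-M$, where $M$ is the number of perfect matchings of $K_{2n-2}$ sharing no edge with $m''$; it therefore suffices to prove $M\ge(2n-3)!!/\sqrt e$, since then the bound $(1-1/\sqrt e)(2n-3)!!$ follows and $1-1/\sqrt e<2/5$ (equivalently $e<25/9$) gives the second inequality.

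To bound $M$, apply inclusion--exclusion over the $n-2$ edges of $m''$: a chosen set of $j$ of them covers $2j$ vertices and leaves $(2n-2j-3)!!$ completions, so
\[
  \frac{M}{(2n-3)!!}=\sum_{j=0}^{n-2}(-1)^j\binom{n-2}{j}\prod_{i=1}^{j}\frac{1}{2n-2i-1}=\sum_{j=0}^{n-2}\frac{(-1)^j}{j!}\prod_{i=1}^{j}\frac{n-1-i}{2n-2i-1}.
\]
(Splitting instead on whether $m$ uses the edge $ab$ gives $M=D_{n-1}+D_{n-2}$, where $D_k$ is the number of perfect matching derangements of $K_{2k}$ --- the route suggested by the count quoted above.) Now $2(n-1-i)=2n-2-2i<2n-2i-1$, so $\tfrac{n-1-i}{2n-2i-1}<\tfrac12$ for all $i\ge 1$; writing $a_j:=\tfrac{1}{j!}\prod_{i=1}^{j}\tfrac{n-1-i}{2n-2i-1}$ and $b_j:=\tfrac{1}{j!\,2^j}$, we have $a_0=b_0$ and $0<a_j<b_j$ for $j\ge1$, while $\sum_{j\ge 0}(-1)^jb_j=e^{-1/2}$. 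Subtracting,
\[
  \frac{M}{(2n-3)!!}-e^{-1/2}=\sum_{j=1}^{n-2}(-1)^{j+1}(b_j-a_j)+\sum_{j\ge n-1}(-1)^{j+1}b_j.
\]
Here $b_j-a_j>0$, and the sequence $(b_j-a_j)_{j\ge1}$ is decreasing --- a short estimate, since $1-\prod_{i=1}^j(1-\tfrac{1}{2n-2i-1})\ge\tfrac{1}{2n-2j-1}$ --- so the first sum is at least $(b_1-a_1)-(b_2-a_2)=\tfrac{4n-11}{8(2n-3)(2n-5)}>0$, while the second is an alternating tail of absolute value at most $b_{n-1}=1/((n-1)!\,2^{n-1})$. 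Since the first sum is of order $1/n$ and $b_{n-1}$ is super-exponentially small, the first sum outweighs the tail and $M/(2n-3)!!\ge e^{-1/2}$.

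The only delicate step is this final estimate: the termwise inequality $a_j<b_j$ by itself gives merely $M/(2n-3)!!\ge e^{-1/2}-O\!\bigl(1/((n-1)!\,2^{n-1})\bigr)$, so the real work is the bookkeeping that recovers the sharp constant --- verifying that $(b_j-a_j)$ is decreasing, that $(b_1-a_1)-(b_2-a_2)$ really dominates the discarded tail of the $e^{-1/2}$-series, and (since the alternating sum is too short for this argument when $n$ is small) checking the first two or three values of $n$ by hand. The bijective reduction and the inclusion--exclusion are routine.
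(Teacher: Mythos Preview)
Your argument is correct. The paper itself gives essentially no proof here: it simply records beforehand that the number of perfect-matching derangements of $K_{2m}$ is $\lfloor(2m-1)!!/\sqrt{e}\rceil$ and asserts that the proposition follows from this count. Your splitting $M=D_{n-1}+D_{n-2}$ is exactly the link the paper leaves to the reader, and your self-contained inclusion--exclusion estimate against the Taylor series of $e^{-1/2}$ avoids relying on that nearest-integer formula --- which is just as well, since the formula is not literally true (for instance $D_3=8$ while $\lfloor 15/\sqrt e\rceil=9$, and in fact $D_m<(2m-1)!!/\sqrt e$ for all $m\ge 3$), so the paper's one-line deduction genuinely needs the extra $D_{n-2}$ summand and a word about the error term to be made rigorous. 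The only place your write-up could use one more line is the monotonicity of $b_j-a_j$: writing $b_j-a_j=c_jb_j$ with $c_j=1-\prod_{i=1}^j(1-\tfrac{1}{2n-2i-1})$, your lower bound $c_j\ge 1/(2n-2j-1)$ gives $c_{j+1}/c_j\le 1+(2n-2j-1)/(2n-2j-3)\le 8/3<2(j+1)$ for $1\le j\le n-3$, which is precisely what is needed.
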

\noindent We are now in a position to prove Theorem~\ref{thm:hamiltonian}, a stronger result that is easily seen to imply the EKR theorem for intersecting families of perfect matchings of $K_{2n}$.
\begin{theorem}\label{thm:hamiltonian} \emph{\cite{Lindzey17}}
If $\mathcal{F} \subseteq \mathcal{X}$ is non-Hamiltonian, i.e., $m \cup m' \not \cong C_{2n}$ for all $m,m' \in \mathcal{F}$, then $|\mathcal{F}| \leq (2n-3)!!$. Equality holds if and only if $\mathcal{F}$ is a canonically intersecting family.
\end{theorem}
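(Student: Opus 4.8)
The plan is to mirror the proofs of the previous sections, using Proposition~\ref{prop:matchingE1} in place of the earlier formulas for $E_1$, and to finish with a ``boosting'' step adapted to the fact that non-Hamiltonicity is a weaker condition than sharing an edge. The independent sets of the Hamiltonian associate $A_{(n)}\in\mathcal{A}$ are exactly the non-Hamiltonian families, so the ratio bound applied to $A_{(n)}$ gives $|\mathcal{F}|\le(2n-3)!!$ (this bound is due to~\cite{Lindzey17}), and at equality $1_\mathcal{F}\in V_0\oplus V_1$; equivalently, writing $P=E_0+E_1$, we have $P1_\mathcal{F}=1_\mathcal{F}$, and $(E_11_\mathcal{F})_m$ is the quantity in Proposition~\ref{prop:matchingE1}. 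Abbreviate $g(ij):=|\mathcal{F}\!\!\downarrow_{ij}\!\!|$.

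First I would localize. The $1$-related associate $A_{(2,1^{n-2})}$ is connected, so pick $m_1\in\mathcal{F}$ and $m_0\notin\mathcal{F}$ that are $1$-related; they share $n-2$ edges and differ on a $4$-cycle on four vertices $a,b,c,d$, say with $\{ab,cd\}\subseteq m_1$ and $\{ac,bd\}\subseteq m_0$. Since $E_01_\mathcal{F}$ is constant and $P1_\mathcal{F}=1_\mathcal{F}$, we get $(E_11_\mathcal{F})_{m_1}-(E_11_\mathcal{F})_{m_0}=1$; the $n-2$ shared edges cancel in Proposition~\ref{prop:matchingE1}, leaving
\[
g(ab)+g(cd)-g(ac)-g(bd)\;=\;2(n-1)(2n-5)!!\;=\;(2n-3)!!+(2n-5)!!,
\]
where the last equality uses $2(n-1)=(2n-3)+1$. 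Dropping the two nonnegative subtracted terms and averaging, some edge $e\in\{ab,cd\}$ satisfies $g(e)\ge(n-1)(2n-5)!!>\tfrac12(2n-3)!!$.

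It remains to upgrade this to $\mathcal{F}=\mathcal{F}_e$. Since $|\mathcal{F}|=|\mathcal{F}_e|=(2n-3)!!$, it is enough to show $\mathcal{F}\!\!\downarrow_e\!\!=\mathcal{F}_e$; suppose not. Contracting the endpoints of $e$ identifies the fiber $\mathcal{F}_e$ with the perfect matchings of $K_{2n-2}$, and under this identification the $e$-avoiding $1$-swaps are the $1$-related pairs of the corresponding (connected) perfect matching scheme; hence there is a $1$-related pair $m_1'\in\mathcal{F}\!\!\downarrow_e$, $m_0'\in\mathcal{F}_e\setminus\mathcal{F}\!\!\downarrow_e$ whose swapped $4$-cycle avoids $e$. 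Re-running the localization step on this pair produces an edge $f$ disjoint from $e$ with $g(f)\ge(n-1)(2n-5)!!$, and then
\[
|\mathcal{F}\!\!\downarrow_e\!\!\cap\mathcal{F}\!\!\downarrow_f\!\!|\;\ge\;g(e)+g(f)-|\mathcal{F}|\;\ge\;(2n-5)!!.
\]
As $(2n-5)!!$ is the total number of perfect matchings containing both $e$ and $f$, it follows that $\mathcal{F}$ contains all of them. Now take any $m'\in\mathcal{F}$ with $e\notin m'$ (one exists since $\mathcal{F}\ne\mathcal{F}_e$); if also $f\notin m'$, then $m'$ is non-Hamiltonian with all $(2n-5)!!$ matchings containing $e$ and $f$, which forces the number of $m\supseteq\{e,f\}$ with $m\cup m'\cong C_{2n}$ to be zero — but a direct count (in the spirit of Proposition~\ref{prop:count}) shows this number is positive, a contradiction. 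Iterating the previous display over several such boundary pairs collects enough edges $f$, disjoint from $e$, with $\{m:e,f\in m\}\subseteq\mathcal{F}$ to rule out every $m'\in\mathcal{F}$ omitting $e$, giving $\mathcal{F}=\mathcal{F}_e$ (the few small $n$ being checked directly).

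The crux is this last boosting step, and it is genuinely harder than in the intersecting setting: there, a single witness $m'\in\mathcal{F}$ omitting $e$ meets all of $\mathcal{F}\!\!\downarrow_e$, so Proposition~\ref{prop:count} caps $g(e)$ below $\tfrac12(2n-3)!!$ and contradicts the lower bound at once. Here a single witness only forbids $2^{n-2}(n-2)!=o\big((2n-3)!!\big)$ matchings through $e$, so one must exploit the whole of $\mathcal{F}\setminus\mathcal{F}\!\!\downarrow_e$. The cleanest way to organize the argument is probably to show that the displayed identity forces every $1$-related boundary pair of $\mathcal{F}\!\!\downarrow_e$ inside the fiber $\mathcal{F}_e$ to lie on a full sub-star $\{m:e,f\in m\}$ already contained in $\mathcal{F}$, and that no proper subset of $\mathcal{F}_e$ can carry this much rigidity — equivalently, to prove that an extremal non-Hamiltonian family must in fact be intersecting, after which Proposition~\ref{prop:count} finishes exactly as in the earlier sections.
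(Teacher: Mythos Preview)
Your localization step matches the paper's exactly, down to the identity
$g(ab)+g(cd)-g(ac)-g(bd)=(2n-3)!!+(2n-5)!!$
and the averaged lower bound $g(e)\ge\tfrac12\big((2n-3)!!+(2n-5)!!\big)$ for some edge $e$.

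The divergence is in the boosting step. The paper does not iterate at all: it takes any $m'\in\mathcal F\setminus\mathcal F\!\!\downarrow_e$, asserts that every $m\in\mathcal F\!\!\downarrow_e$ must \emph{intersect} $m'$ (share an edge), and invokes Proposition~\ref{prop:count} to get $|\mathcal F\!\!\downarrow_e|<\tfrac25(2n-3)!!$, contradicting the lower bound in one line. That is precisely the ``intersecting setting'' finish you describe in your final paragraph, and your diagnosis there is correct: non-Hamiltonicity of $\mathcal F$ does not force $m\cap m'\neq\emptyset$ (two edge-disjoint matchings whose union is, say, $C_4\sqcup C_{2n-4}$ are non-Hamiltonian but non-intersecting), so the paper's written argument is only valid for intersecting $\mathcal F$ and leaves the very gap you identified for the non-Hamiltonian statement.

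Your proposed workaround does not close it either. The claim that for disjoint $e=\{i,j\}$, $f=\{k,\ell\}$ and any $m'$ avoiding both there is always some $m\supseteq\{e,f\}$ with $m\cup m'\cong C_{2n}$ is false: if $m'$ contains $\{i,k\}$ and $\{j,\ell\}$, then $e,f,ik,j\ell$ already close the $4$-cycle $i\text{--}j\text{--}\ell\text{--}k\text{--}i$, so $m\cup m'$ is never a single $2n$-cycle regardless of how $m$ is completed on the remaining $2n-4$ vertices (concretely, $n=4$, $e=12$, $f=34$, $m'=\{13,24,57,68\}$). Hence a single auxiliary edge $f$ cannot deliver the contradiction, and the phrase ``iterating over several boundary pairs collects enough edges $f$'' is a hope, not an argument. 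In sum: you have put your finger on a genuine gap in the paper's uniqueness proof for the non-Hamiltonian case, but your own boosting step is also incomplete.
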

\begin{proof}
Independent sets of $A_n := A_{(n)} \in \mathcal{A}$ are non-Hamiltonian families of $\mathcal{X}$. In~\cite{Lindzey17}, it was shown that $P_{n}((n)) = (2n-2)!!$ and $P_n((n-1,1)) = -(2n-4)!!$ are the unique greatest and least eigenvalues of $A_n$ corresponding to $V_0 := V_{(n)}$ and $V_1 := V_{(n-1,1)}$. By the ratio bound
\[
	|\mathcal{F}| \leq (2n-1)!! \frac{(2n-4)!!}{(2n-2)!!+(2n-4)!!} = (2n-3)!! = |\mathcal{F}_{ij}|,
\]
thus $1_{\mathcal{F}} \in V_0 \oplus V_1$ for any maximum intersecting $\mathcal{F}$. We show that $\mathcal{F} = \mathcal{F}_{ij}$ for some $ij \in \Sigma$.

Let $P = E_0 + E_1$ and $P_m := (P1_\mathcal{F})_m$.
Since the graph of $A_1$ is connected, pick $m_1 \in \mathcal{F}$ and $m_0 \notin \mathcal{F}$ to be 1-related.  Since $P_m = (1_\mathcal{F})_m$ for all $m \in \mathcal{X}$, we may write 
\[
P_{m_1} - P_{m_0} = 
\frac{1}{(2n-5)!! 2(n-1)} \sum_{ij \in m_1} |\mathcal{F}\!\! \downarrow_{ij} \!\!| - \frac{1}{(2n-5)!! 2(n-1)} \sum_{ij \in m_0} |\mathcal{F}\!\! \downarrow_{ij} \!\!|  = 1, \text{ thus}
\]
\[
\sum_{ij \in m_1} |\mathcal{F}\!\! \downarrow_{ij}\!\! | - \sum_{ij \in m_0} |\mathcal{F}\!\! \downarrow_{ij} \!\!|  = (2n-5)!! 2(n-1) = (2n-3)!! + (2n-5)!!.
\]
Since $m_1$ and $m_0$ are 1-related, all but 4 terms cancel, so without loss of generality, we have
\[
|\mathcal{F}\!\! \downarrow_{ij}\!\! | + |\mathcal{F}\!\! \downarrow_{k\ell}\!\! | - |\mathcal{F}\!\! \downarrow_{ik}\!\! | - |\mathcal{F}\!\! \downarrow_{j\ell} \!\!|= (2n-3)!! + (2n-5)!!,
\]
Dropping negative terms and averaging shows, without loss of generality, that 
\[
|\mathcal{F} \!\! \downarrow_{ij} \!\!| \geq \frac{(2n-3)!! + (2n-5)!!}{2}.
\]
Suppose there exists an $m' \in \mathcal{F} \! \setminus \! \mathcal{F}\!\!\downarrow_{ij}$. Each $m \in \mathcal{F}\!\! \downarrow_{ij}$ must intersect $m'$, so by Proposition~\ref{prop:count}
\[
	\frac{(2n-3)!! + (2n-5)!!}{2} \leq |\mathcal{F}\!\! \downarrow_{ij}\!\! | < \frac{2}{5}(2n-3)!!, 
\]
which is a contradiction. We deduce that $\mathcal{F} \! \setminus \! \mathcal{F}\!\!\downarrow_{ij}$ is empty, thus $\mathcal{F} = \mathcal{F}_{ij}$, as desired.
\end{proof}

\section{Partially 2-Intersecting Perfect Hypermatchings}\label{sec:hypermatchings}
\noindent In this section we prove a hypergraph generalization of the characterization of the largest intersecting families of perfect matchings of $K_{2n}$ to so-called partially 2-intersecting \emph{perfect hypermatchings} of $k$-uniform hypergraphs on $kn$ vertices $K_{kn}^k = ([kn],E)$. Let $\mathcal{M}_{kn}^k$ be the set of perfect hypermatchings of $K_{kn}^k$. Two perfect hypermatchings $m,m' \in \mathcal{M}_{kn}^k$ are said to \emph{partially 2-intersect} if there exists a pair of $k$-edges $e \in m$, $e' \in m'$ such that $|e \cap e'| \geq 2$. Meagher et al.~\cite{MeagherPartial} show for sufficiently large $n$ that if $\mathcal{F} \subseteq \mathcal{M}_{kn}^k$ is partially 2-intersecting, then 
\[
	|\mathcal{F}| \leq \binom{kn-2}{k-2} |\mathcal{M}_{k(n-1)}^k|.
\]
The \emph{canonically partially 2-intersecting} families $\mathcal{F}_{ij} = \{m \in \mathcal{M}_{kn}^k : ij \subseteq e \in m\}$ where $ij \subseteq [kn]$ meet this bound with equality. Their proof uses the ratio bound, and since equality is met, a consequence of their proof is that any maximum partially 2-intersecting family lives in the direct sum of the greatest eigenspace $V_0$ and least eigenspace $V_1$ of the \emph{partial 2-derangement graph} $\Gamma = (\mathcal{M}_{kn}^k,E)$ defined such that $m,m'$ are adjacent if there exists no pair of edges $e \in m$, $e' \in m'$ such that $|e \cap e'| \geq 2$. In particular, $V_0$ and $V_1$ are the trivial module and the $S_{kn}$-irreducible corresponding to $(kn-2,2) \vdash kn$ respectively.

Meagher et al.~conjectured for sufficiently large $n$ that the canonically partially 2-intersecting families are the only largest partially 2-intersecting families. The main result of this section is Theorem~\ref{thm:partial} which proves their conjecture. The proof of this result is a bit more involved as we can no longer exploit off-the-shelf character theory to derive the projector $P = E_0 + E_1$ onto the space $V_0 \oplus V_1$ as we did in the previous sections; therefore, we must derive $P$ from scratch using representation-theoretical techniques. As always, we have $E_0 = J/|\mathcal{M}_{kn}^k|$, so the task at hand is deriving the orthogonal projector $E_1$ onto $V_1$. 

Let $\mathbb{R}[\mathcal{M}_{kn}^k]$ be the space of real-valued functions over $\mathcal{M}_{kn}^k$. 
The stabilizer subgroup of any fixed perfect hypermatching
is isomorphic to the wreath product $H_{k,n} := S_k \wr S_n \leq S_{kn}$, thus $\mathcal{M}_{kn}^k \cong S_{kn}/H_{k,n}$. The multiplicities of the permutation representation $1\!\!\uparrow_{H_{k,n}}^{S_{kn}} \cong \mathbb{R}[\mathcal{M}_{kn}^k]$ are given by the coefficients of the \emph{plethysm} $\odot$ of complete symmetric homogeneous functions $h_k \odot h_n$ expressed in the basis of Schur functions (see~\cite[Ch. 5.4]{JamesKerber}). We abuse notation by letting $\lambda \vdash kn$ refer to its corresponding $S_{kn}$-irreducible. One can show that the $S_{kn}$-irreducible $(kn-2,2)$ has multiplicity 1 in the permutation representation of $S_{kn}$ on $\mathcal{M}_{kn}^k$ (see~\cite[Prop.~2.5(b)]{Weintraub90}, for example).  The coefficients of the plethysm $h_k \odot h_n$ expressed in the basis of Schur functions are seldom $\{0,1\}$-valued, and so the \emph{orbitals} of $S_{kn}$ acting on $\mathcal{M}_{kn}^k \times \mathcal{M}_{kn}^k$ can be realized as a set $\mathcal{A} = \{A_0,A_1,\cdots,A_d\}$ of binary matrices satisfying all the axioms of Definition~\ref{def:assoc} except for commutativity, i.e., $\mathcal{A}$ is a \emph{homogeneous coherent configuration}~\cite{Higman75}. For any $m,m' \in \mathcal{M}_{kn}^k$, let $d(m,m')$ be the \emph{meet table} of $m,m'$, that is, the $n \times n$ matrix with rows indexed by the $k$-edges $e \in m$, columns indexed by the $k$-edges $e' \in m'$, defined such that $d(m,m')_{e,e'} = |e \cap e'|$. It is well-known that the orbitals of $\mathcal{A}$ are in one-to-one correspondence with the isomorphism classes $\{\boldsymbol{\mu}\}$ of meet tables, which in turn are in bijection with double cosets $H_{k,n} \backslash S_{kn} / H_{k,n}$. The reason why $\mathcal{A}$ is not an association scheme is due to the fact that there are more non-isomorphic meet tables than there are isotypic components of $1 \!\! \uparrow_{H_{k,n}}^{S_{kn}}$. See~\cite[Ch.~15]{GodsilMeagher} for a detailed discussion of meet tables, homogeneous coherent configurations, and their connections to EKR combinatorics.

A \emph{bi-$(H_{k,n}\backslash S_{kn}/H_{k,n})$-invariant} function of the group algebra $\mathbb{R}[S_{kn}]$ is a function that is constant on the $(H_{k,n}\backslash S_{kn}/H_{k,n})$-double cosets.
To determine the orthogonal projection onto the space $(kn) \oplus (kn-2,2)$, it suffices to compute the unique norm-1 bi-$(H_{k,n}\backslash S_{kn}/H_{k,n})$-invariant function of $(kn-2,2) \leq \mathbb{R}[\mathcal{M}_{kn}^k] \leq \mathbb{R}[S_{kn}]$, i.e., the $(kn-2,2)$-\emph{spherical function} $\omega^{(kn-2,2)}$. 
To derive this spherical function, some baroque but standard representation-theoretic terminology will be needed, which can be found in~\cite{CST,Sagan}.

For any \emph{tabloid} $\{t\}$ of shape $(kn-2,2)$, define $\mathcal{S}(\{t\}) := \{ m \in \mathcal{M}_{kn}^k : \{t\}_2 \subseteq e \in m  \}$ where $\{t\}_2$ denotes the 2-set that is the second row of $\{t\}$. Define the \emph{intertwining map}
\[
	\mathcal{I}_{(kn-2,2)} : M^{(kn-2,2)} \rightarrow \mathbb{R}[\mathcal{M}_{kn}^k], \quad \mathcal{I}_{(kn-2,2)}(e_{\{t\}}) = 1_{\mathcal{S}(\{t\})}
\]
where $M^{(kn-2,2)}$ denotes the \emph{permutation module} with standard basis functions $e_{\{t\}}$. 
Recall that the orthogonal projection onto the space of $H_{k,n}$-invariant functions (as an element of the group algebra of $S_{kn}$) is as follows:
\[
	P = \frac{1}{|H_{k,n}|} \sum_{h \in H_{k,n}} h.
\]
Let $t$ be the standard Young tableau with bottom row $\{k+1,k+2\}$. Its \emph{column-stabilizer} is $C_t = \{ (), (1,k+1), (2,k+2), (1,k+1)(2,k+2)\}$. Let $e_t$ be the \emph{polytabloid} of $t$, i.e.,
\[
	e_t := \sum_{\sigma \in C_t} \text{sgn}(\sigma)~e_{\{\sigma t\}}.
\]
We have
\begin{align*}
	\omega^{(kn-2,2)} = \frac{1}{2} P\mathcal{I}_{(kn-2,2)}(e_t) = \frac{1}{2|H_{k,n}|} \sum_{h \in H_{k,n}}  &\mathcal{I}_{(kn-2,2)}(e_{h\{ t\}})
 + \mathcal{I}_{(kn-2,2)}(e_{h\{(1,3)(2,4) t\}})  \\
 &-   \mathcal{I}_{(kn-2,2)}(e_{h\{(1,3) t\}}) - \mathcal{I}_{(kn-2,2)}(e_{h\{(2,4) t\}}) .
\end{align*}
For each $\sigma \in C_t$, either the last row of $\{s\} := \{\sigma t\}$ is contained in some $e \in m^*$, or not. The positive terms fall into the former case, whereas the negative terms fall into the latter. 

For the former, we have
\begin{align*}
\frac{1}{|H_{k,n}|} \sum_{h \in H_{k,n}} \mathcal{I}_{(kn-2,2)}(e_{h\{ s\}}) &= \frac{1}{|H_{k,n}|}\sum_{h \in H_{k,n}} \mathbf{1}_{h\{s\}_2}\\
&= \frac{1}{\binom{k}{2}n}\sum_{\substack{\ell \subseteq e \in m^*  \\ |\ell | = 2} } \mathbf{1}_{\ell}\\
&= \sum_{ \boldsymbol{\mu} \in H_{k,n} \backslash S_{kn} / H_{k,n} } \!\!\!\! \frac{\text{fp}_{2/k}(\boldsymbol{\mu}) }{\binom{k}{2}n}  \mathbf{1}_{\boldsymbol{\mu}},
\end{align*}
where we define $\text{fp}_{2/k}(\boldsymbol{\mu}) := \sum_{e \in m} \sum_{e' \in m'}\binom{d(m,m')_{e,e'}}{2}$ for any $m \cup m' \cong \boldsymbol{\mu}$. 

For the latter, we have
\begin{align*}
\frac{1}{|H_{k,n}|} \sum_{h \in H_{k,n}} \mathcal{I}_{(kn-2,2)}(e_{h\{ s\}}) &= \frac{1}{|H_{k,n}|}\sum_{h \in H_{k,n}} \mathbf{1}_{h\{s\}_2}\\
&= \frac{1}{ \binom{kn}{2} - \binom{k}{2}n}\sum_{\substack{\ell \not \subseteq e \in m^*  \\ |\ell | = 2} } \mathbf{1}_{\ell}\\
&= \sum_{ \boldsymbol{\mu} \in H_{k,n} \backslash S_{kn} / H_{k,n} } \!\!\!\! \frac{\binom{k}{2}n-\text{fp}_{2/k}(\boldsymbol{\mu}) }{ \binom{kn}{2} - \binom{k}{2}n }  \mathbf{1}_{\boldsymbol{\mu}}.
\end{align*}
Let $\omega^{(kn-2,2)}_{\boldsymbol{\mu}}$ be the value of $\omega^{(kn-2,2)}$ at the double coset $\boldsymbol{\mu}$. Combining terms gives us
\[
	\omega^{(kn-2,2)}_{\boldsymbol{\mu}} = \frac{\text{fp}_{2/k}(\boldsymbol{\mu}) }{\binom{k}{2}n} - \frac{\binom{k}{2}n-\text{fp}_{2/k}(\boldsymbol{\mu}) }{ \binom{kn}{2} - \binom{k}{2}n } = \frac{1}{k} \left( \left( \frac{2}{(k-1)n} + \frac{2}{k(n-1)} \right)\text{fp}_{2/k}(\boldsymbol{\mu})  - \frac{(k-1)}{(n-1)} \right)
\]
for all $(H_{k,n} \backslash S_{kn} / H_{k,n})$-double cosets $\boldsymbol{\mu}$.
Through standard representation theory, the matrix representation $E_{1}$ of the orthogonal projection onto $(kn-2,2)$ can now be written as follows:
\begin{align*}
	(E_{1})_{ m,m'} &= \frac{\dim(kn-2,2)}{|\mathcal{M}_{kn}^k|} \sum_{m \in \mathcal{M}_{kn}^k} \omega^{(n-2,2)}_{d(m,m')}\\ 
	&= \left( \frac{\binom{kn}{2}-kn}{ |\mathcal{M}_{kn}^k|}\right) \left( \left( \frac{2}{k(k-1)n} + \frac{2}{k^2(n-1)} \right)\text{fp}_{2/k}(d(m,m'))  - \frac{(k-1)}{k(n-1)}\right)\\
	&=  \left( \frac{\binom{kn}{2}-kn}{ |\mathcal{M}_{kn}^k|} \right) \left(\left( \frac{2n+k-3}{k(k-1)n(n-1)} \right)\text{fp}_{2/k}(d(m,m'))  - \frac{(k-1)}{k(n-1)} \right)\\
	&=  \frac{1}{ |\mathcal{M}_{kn}^k|} \left(\left( \frac{(2n+k-3)(kn -3)}{2(k-1)(n-1)} \right)\text{fp}_{2/k}(d(m,m'))  - \frac{(k-1)}{k(n-1)}\left(\binom{kn}{2}-kn\right) \right)
\end{align*}
for all $m,m' \in \mathcal{M}_{kn}^k$. Double counting now gives the following proposition.
\begin{proposition}\label{prop:hyperE1}For any $\mathcal{F} \subseteq \mathcal{M}_{kn}^k$ and $m \in \mathcal{M}_{kn}^k$, we have
\[
(E_11_\mathcal{F})_{m} = \left( \frac{(2n+k-3)(kn -3)}{2(k-1)(n-1)|\mathcal{M}_{kn}^k|} \right)\!\! \sum_{ij \subseteq e \in m} \!\! |\mathcal{F}\!\!\downarrow_{ij}|  - \frac{|\mathcal{F}|(k-1)}{|\mathcal{M}_{kn}^k|k(n-1)}\left(\binom{kn}{2}-kn\right).
\]
\end{proposition}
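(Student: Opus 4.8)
The plan is to read $(E_1 1_\mathcal{F})_m$ directly off the matrix entries of $E_1$ computed immediately above. Since $(E_1 1_\mathcal{F})_m = \sum_{m' \in \mathcal{F}} (E_1)_{m,m'}$, substituting the derived expression for $(E_1)_{m,m'}$ reduces the proposition to evaluating the single sum $\sum_{m' \in \mathcal{F}} \text{fp}_{2/k}(d(m,m'))$: the constant term $-\frac{k-1}{k(n-1)}\bigl(\binom{kn}{2}-kn\bigr)$ contributes $|\mathcal{F}|$ copies of itself, which is already the second summand of the claimed formula, and the overall factor $1/|\mathcal{M}_{kn}^k|$ is carried along unchanged.

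The one computation to do is therefore a double count of $\sum_{m' \in \mathcal{F}} \text{fp}_{2/k}(d(m,m'))$. Recall $\text{fp}_{2/k}(d(m,m')) = \sum_{e \in m} \sum_{e' \in m'} \binom{|e \cap e'|}{2}$, and $\binom{|e\cap e'|}{2}$ is the number of $2$-subsets of $e \cap e'$; hence $\text{fp}_{2/k}(d(m,m'))$ counts triples $(e, e', \{i,j\})$ with $e \in m$, $e' \in m'$, and $\{i,j\} \subseteq e \cap e'$. Because $m$ and $m'$ are perfect hypermatchings, their edges are pairwise disjoint, so a given $2$-set $\{i,j\}$ lies in at most one edge of $m$ and at most one edge of $m'$; thus $\text{fp}_{2/k}(d(m,m'))$ equals the number of pairs $\{i,j\} \subseteq e \in m$ that also satisfy $\{i,j\} \subseteq e' \in m'$, i.e.\ the number of such pairs with $m' \in \mathcal{F}\!\!\downarrow_{ij}$. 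Summing over $m' \in \mathcal{F}$ and interchanging the order of summation, grouping by the $2$-set $\{i,j\}$ (which ranges over the $\binom{k}{2}n$ pairs contained in an edge of $m$), gives $\sum_{m' \in \mathcal{F}} \text{fp}_{2/k}(d(m,m')) = \sum_{ij \subseteq e \in m} |\mathcal{F}\!\!\downarrow_{ij}|$. Plugging this back into $\sum_{m' \in \mathcal{F}}(E_1)_{m,m'}$ yields the stated identity.

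There is no real obstacle here: all the work went into deriving the matrix form of $E_1$ above, and what remains is the routine double count. The only point requiring a moment's care is the observation that disjointness of the edges within each hypermatching lets one read $\binom{|e \cap e'|}{2}$ as a genuine count of $2$-sets without any overcounting across distinct edges, so that the interchange of summation order is clean and the resulting count is precisely $|\mathcal{F}\!\!\downarrow_{ij}|$.
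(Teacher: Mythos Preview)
Your argument is correct and is exactly what the paper has in mind: the paper simply says ``Double counting now gives the following proposition'' after deriving the closed form for $(E_1)_{m,m'}$, and your proposal spells out that double count in full. The key identity $\sum_{m'\in\mathcal{F}}\mathrm{fp}_{2/k}(d(m,m'))=\sum_{ij\subseteq e\in m}|\mathcal{F}\!\!\downarrow_{ij}|$ you establish via the disjointness of edges in a hypermatching is precisely the intended double count.
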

\noindent Observe that setting $k=2$ recovers Proposition~\ref{prop:matchingE1}.

Because $(kn-2,2)$ has multiplicity 1, our expression for $\omega^{(kn-2,2)}$ allows us to easily read off the $(kn-2,2)$-eigenvalue of \emph{any} matrix in the non-commutative matrix algebra spanned by the orbitals. For example, we can write $\Gamma$ as the sum of all orbitals $A_{\boldsymbol{\mu}}$ such that $\text{fp}_{2/k}(\boldsymbol{\mu}) = 0$, in which case the $(kn-2,2)$-eigenvalue $\eta_{(kn-2,2)}$ of $\Gamma$ is simply
\[
	\eta_{(kn-2,2)} = \sum_{ \substack{  \boldsymbol{\mu} : \text{fp}_{2/k}(\boldsymbol{\mu}) = 0}} v_{\boldsymbol{\mu}}\omega^{(kn-2,2)}_{\boldsymbol{\mu}} = \frac{-D_{2/k}(k-1)}{k(n-1)}
\]
where $v_{\boldsymbol{\mu}}$ is the valency of $A_{\boldsymbol{\mu}}$ and $D_{2/k}$ is the degree of $\Gamma$, i.e., for a fixed $m^* \in \mathcal{M}_{kn}^k$, the number of $m \in \mathcal{M}_{kn}^k$ such that $|e \cap e^*| \leq 1$ for all $e \in m, e^* \in m^*$. In~\cite{MeagherPartial}, the eigenvalue $\eta_{(kn-2,2)}$ was computed by constructing a suitable \emph{equitable partition} of $\Gamma$~\cite{GodsilRoyle}, but such methods fall short of determining the $(kn-2,2)$-eigenvalue of any matrix in the span of $\mathcal{A}$. 

A family $\mathcal{F}$ is \emph{$ij$-centered} if $\mathcal{F}\!\!\downarrow_{ij} = \mathcal{F}$. 
Two families $\mathcal{G},\mathcal{G'}$ are \emph{cross partially 2-intersecting} if $m$ and $m'$ partially 2-intersect for all $m \in \mathcal{G}$, $m' \in \mathcal{G'}$. For any $ij$, let $\mathcal{F}_{ij} := \{ m \in \mathcal{M}_{kn}^k : ij \subseteq e \text{ for some } e \in m\}$ be the corresponding canonically partially 2-intersecting family. For the following combinatorial lemma, it will be useful to note that $|\mathcal{M}_{kn}^k| = \prod_{i = 0}^{n-1} \binom{k(n-i)}{k}  /n!$.

\begin{lemma}\label{lem:cross} Let $i,i',j,j' \in [kn]$ such that $ij \neq i'j'$. Let $\mathcal{G} \subseteq \mathcal{M}_{kn}^k$ be $ij$-centered, $\mathcal{G'} \subseteq \mathcal{M}_{kn}^k$ be $i'j'$-centered, and suppose that $\mathcal{G},\mathcal{G'}$ are cross partially 2-intersecting. Then
	$$|\mathcal{G}| \cdot |\mathcal{G}'| = o\left( \left(\binom{kn-2}{k-2}|\mathcal{M}_{k(n-1)}^k|\right)^2\right).$$
\end{lemma}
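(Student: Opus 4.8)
I would prove this by conditioning each member on the edge containing its central pair. Set $M := \binom{kn-2}{k-2}\,|\mathcal{M}_{k(n-1)}^k| = |\mathcal{F}_{ij}| = |\mathcal{F}_{i'j'}|$, so that $M = \Theta(n^{k-2}|\mathcal{M}_{k(n-1)}^k|)$. Since $\mathcal{G}$ is $ij$-centered, each $m\in\mathcal{G}$ has a unique edge $E(m)\supseteq\{i,j\}$ — the edge through $i$ — so we may write $\mathcal{G}=\bigsqcup_{S}\mathcal{G}_S$, where $\mathcal{G}_S:=\{m\in\mathcal{G}:E(m)=\{i,j\}\cup S\}$ and $S$ ranges over $\binom{[kn]\setminus\{i,j\}}{k-2}$; likewise $\mathcal{G}'=\bigsqcup_{S'}\mathcal{G}'_{S'}$ with central edges $E'=\{i',j'\}\cup S'$. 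Identifying $\mathcal{G}_S$ with a family of perfect hypermatchings on the $k(n-1)$ vertices $U_S:=[kn]\setminus(\{i,j\}\cup S)$ gives $|\mathcal{G}_S|,|\mathcal{G}'_{S'}|\le|\mathcal{M}_{k(n-1)}^k|$, and $|\mathcal{G}|\cdot|\mathcal{G}'|=\sum_{S,S'}|\mathcal{G}_S|\cdot|\mathcal{G}'_{S'}|$.

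First I would dispose of the pairs $(S,S')$ with $|E\cap E'|\ge 2$. A short count (splitting on $|\{i,j\}\cap\{i',j'\}|\in\{0,1\}$: two $k$-sets through prescribed pairs meet in two points only after a genuine coincidence, an event of ``probability'' $O(1/n)$) shows there are only $O(n^{2k-5})$ such pairs, so this part of the sum is at most $O(n^{2k-5})|\mathcal{M}_{k(n-1)}^k|^2=o(M^2)$, since $M^2=\Theta(n^{2k-4}|\mathcal{M}_{k(n-1)}^k|^2)$.

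Now fix a pair $(S,S')$ with $|E\cap E'|\le 1$; here the two central edges cannot witness the partial $2$-intersection of any $m\in\mathcal{G}_S$, $m'\in\mathcal{G}'_{S'}$. Put $\mathcal{A}:=\{m\in\mathcal{G}_S:\text{some edge of }\mu:=m\setminus E\text{ contains two points of }E'\}$ and, symmetrically, $\mathcal{B}\subseteq\mathcal{G}'_{S'}$. Since a fixed pair of points is covered by a common edge of a uniformly random perfect hypermatching of $k(n-1)$ points with probability $\Theta(1/n)$, a union bound over the $\binom{k}{2}$ pairs of a fixed $k$-set gives $|\mathcal{A}|,|\mathcal{B}|=O(|\mathcal{M}_{k(n-1)}^k|/n)$; hence the three terms in $|\mathcal{G}_S|\cdot|\mathcal{G}'_{S'}|=(|\mathcal{A}|+|\mathcal{G}_S\setminus\mathcal{A}|)(|\mathcal{B}|+|\mathcal{G}'_{S'}\setminus\mathcal{B}|)$ that involve $|\mathcal{A}|$ or $|\mathcal{B}|$ each contribute $O(|\mathcal{M}_{k(n-1)}^k|^2/n)$, for a total of $O(n^{2k-5}|\mathcal{M}_{k(n-1)}^k|^2)=o(M^2)$ over all admissible $(S,S')$. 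For the remaining term: if $m\in\mathcal{G}_S\setminus\mathcal{A}$ then the $\le k$ vertices of $E'\setminus E$ lie in distinct edges of $\mu$, so deleting them yields a hypermatching $\hat\mu$ of $Y:=[kn]\setminus(E\cup E')$ with a bounded number of short edges, and $\mu\mapsto\hat\mu$ is at most $k!$-to-one; refilling the short edges with a fixed set of auxiliary vertices (the same on both sides) embeds $\{\hat\mu:m\in\mathcal{G}_S\setminus\mathcal{A}\}$ injectively into $\mathcal{M}_{k(n-1)}^k$. Doing the same for $\mathcal{G}'_{S'}\setminus\mathcal{B}$ produces two families of perfect hypermatchings \emph{on the same vertex set}, and they are cross partially $2$-intersecting, because for such $m,m'$ the witnessing pair must lie in $U_S\cap U_{S'}=Y$ and therefore survives the deletion. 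The cross-intersecting form of the Meagher et al.\ bound at level $n-1$ — which follows from the (bipartite) ratio bound exactly as the non-cross version does — then gives $|\mathcal{G}_S\setminus\mathcal{A}|\cdot|\mathcal{G}'_{S'}\setminus\mathcal{B}|\le(k!)^2\big(\binom{k(n-1)-2}{k-2}|\mathcal{M}_{k(n-2)}^k|\big)^2=O(|\mathcal{M}_{k(n-1)}^k|^2/n^2)$, using $|\mathcal{M}_{k(n-2)}^k|=\Theta(|\mathcal{M}_{k(n-1)}^k|/n^{k-1})$. Summing over the $O(n^{2k-4})$ pairs $(S,S')$ gives $O(n^{2k-6}|\mathcal{M}_{k(n-1)}^k|^2)=o(M^2)$. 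Adding all contributions yields $|\mathcal{G}|\cdot|\mathcal{G}'|=o(M^2)$.

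The main obstacle is the last step: getting the cross partial $2$-intersection to descend to a single, smaller hypermatching space. This is exactly why one must peel off $\mathcal{A}$ and $\mathcal{B}$ (where the intersection is realised by a central edge against an ordinary edge) and pass through the common sub-hypermatching on $Y$; once there, one is on the familiar ground of a cross-intersecting EKR bound one level down, and the counting goes through with a factor of $n$ to spare.
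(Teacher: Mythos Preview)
Your decomposition by central $k$-edges and the bound $|\mathcal{G}|\cdot|\mathcal{G}'|=\sum_{S,S'}|\mathcal{G}_S|\cdot|\mathcal{G}'_{S'}|$ is exactly the paper's setup, and your treatment of the easy cases ($|E\cap E'|\ge 2$; the $\mathcal{A},\mathcal{B}$ terms where a non-central edge hits the opposite central edge in two points) matches the paper's Case~1 and Case~2(a). The real divergence is the remaining ``hard'' sub-case. The paper handles it by counting partially-$2$-intersecting pairs in the ambient $\mathcal{F}_{ijX}\times\mathcal{F}_{i\ell Y}$ and asserting this is $o(|\mathcal{M}_{k(n-1)}^k|^2)$; but that raw count is in fact $\Theta(|\mathcal{M}_{k(n-1)}^k|^2)$, since two uniformly random $k$-hypermatchings on (essentially) the same ground set share a common covered pair with probability roughly $1-e^{-(k-1)^2/2}$, a constant. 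So pair-counting alone cannot close the argument, and your route --- strip $\mathcal{A},\mathcal{B}$, then embed $\mathcal{G}_S\setminus\mathcal{A}$ and $\mathcal{G}'_{S'}\setminus\mathcal{B}$ into a \emph{common} copy of $\mathcal{M}_{k(n-1)}^k$ so that the cross partial $2$-intersection survives, and apply a cross-intersecting bound one level down --- is a genuine strengthening of what the paper writes, and is exactly what the hard case needs. The embedding you describe (delete the $\le k$ vertices of $E'\setminus E$, one per edge of $\mu$, then refill the short edges with fixed auxiliary vertices) is injective up to a $k!$ factor and does preserve the witnessing pair in $Y$, as you say.

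The one step that needs more justification is your appeal to ``the cross-intersecting form of the Meagher et al.\ bound''. The standard bipartite/expander-mixing inequality gives $|A|\,|B|\le(\lambda N/d)^2$ with $\lambda=\max(|\lambda_2|,|\lambda_{\min}|)$, so to land on $(M')^2$ you need $\lambda_2(\Gamma)\le|\lambda_{\min}(\Gamma)|$ for the partial $2$-derangement graph at level $n-1$; the present paper only records that Meagher et al.\ identified $\lambda_{\min}$ with the $(k(n-1)-2,2)$-eigenvalue, not that $\lambda_2$ is controlled. You should either cite where this second-eigenvalue bound is established, or observe that for your purposes any estimate $|A|\,|B|=o(|\mathcal{M}_{k(n-1)}^k|^2)$ for cross partially $2$-intersecting $A,B$ already suffices (you have a spare factor of $n^2$) and supply that directly.
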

\begin{proof}
We just prove the case $|ij \cap i'j'| = 1$, since the other case $|ij \cap i'j'| = 0$ follows the same argument. We may assume $i = i'$ and $j' = \ell \neq j$. For any $ij$ and $X \subseteq [kn] - ij$ of size $k-2$, let $\mathcal{F}_{ijX} \subseteq \mathcal{F}_{ij}$ denote the members of $\mathcal{F}_{ij}$ that have the $k$-edge $ijX$. Then clearly we may write $\mathcal{F}_{ij}$ and $\mathcal{F}_{i \ell}$ as 
\[
	\mathcal{F}_{ij} = \bigsqcup_{X \in \binom{[kn]-ij}{k-2}} \mathcal{F}_{ijX} \quad \text{ and } \quad 	\mathcal{F}_{i\ell} = \bigsqcup_{Y \in \binom{[kn]-i\ell}{k-2}} \mathcal{F}_{i\ell Y},
\]
where each $\mathcal{F}_{ijX}$, $\mathcal{F}_{i\ell Y}$ has size $|\mathcal{M}_{k(n-1)}^k|$.
To prove the lemma, we give upper bounds on the total number of possible cross partially 2-intersecting (CP2I) pairs between any two partitions $(\mathcal{F}_{ijX},\mathcal{F}_{i\ell Y})$ of $\mathcal{F}_{ij}$ and $\mathcal{F}_{i\ell }$ ranging over all such $X$ and $Y$. Summing up these bounds over all pairs of partitions $(\mathcal{F}_{ijX},\mathcal{F}_{i\ell Y})$ gives an upper bound on the number of CP2I pairs between an $ij$-centered and an $i\ell$-centered family. We proceed in cases.
\begin{enumerate}
	\item $(\mathcal{F}_{ijX}, \mathcal{F}_{i\ell Y})$ such that $X \cap Y \neq \emptyset$ (thus $k \geq 3$): there are exactly $|\mathcal{M}_{k(n-1)}^k|^2$ CP2I pairs. The number of pairs of $k$-edges of the form $(ijX, i\ell Y)$ such that $X \cap Y \neq \emptyset$ is
	\[
	2\left( \binom{kn-3}{k-3} \cdot \binom{kn-2}{k-2} \right) + \binom{kn-3}{k-2} \cdot \left( \binom{kn-3}{k-2} - \binom{kn-3-(k-2)}{k-2} \right),	\]
	thus the total contribution of CP2I pairs of this type is $o((\binom{kn-2}{k-2}|\mathcal{M}_{k(n-1)}^k|)^2)$.
	\item $(\mathcal{F}_{ijX}, \mathcal{F}_{i\ell Y})$ such that $X \cap Y = \emptyset$: there are two types of CP2I pairs that can arise. We proceed in cases.
	\begin{itemize}
		\item A member $m \in \mathcal{F}_{i\ell Y}$ has a $k$-edge $e \in m$ such that $e \neq i\ell Y$ and $|e \cap (j+X)| \geq 2$ (thus $k \geq 3$). The number of CP2I pairs of this type cannot be more than
		$$|\mathcal{M}_{k(n-1)}^k| \binom{k-1}{2} \cdot  \binom{k(n-1)-2}{k-2} |\mathcal{M}_{k(n-2)}^k| = |\mathcal{M}_{k(n-1)}^k|^2O(1/n).$$ 
		Thus the total contribution of CP2I pairs of this type is $o((\binom{kn-2}{k-2}|\mathcal{M}_{k(n-1)}^k|)^2)$.
	
		\item A member $m \in \mathcal{F}_{i\ell Y}$ has $|e \cap ijX| \leq 1$ for all $e \in m$. There are clearly $o(|\mathcal{M}_{k(n-1)}^k|^2)$ CP2I pairs of this type, thus the total contribution of CP2I pairs of this type is $o((\binom{kn-2}{k-2}|\mathcal{M}_{k(n-1)}^k|)^2)$.
	\end{itemize}
\end{enumerate}
Summing up the total contributions of each type proves there are $o((\binom{kn-2}{k-2}|\mathcal{M}_{k(n-1)}^k|)^2)$ CP2I pairs between any $ij$-centered family and $i\ell$-centered family, as desired.
\end{proof}

Let $A_1 \in \mathcal{A}$ be the orbital corresponding to the meet table with $n-2$ nonzero diagonal entries, i.e., $(A_1)_{m,m'} = 1$ if and only if there is a transposition $\tau \in S_{kn}$ such that $\tau m = m'$ and $m \neq m'$. In this case, we say that $m$ and $m'$ are 1-related. The graph of $A_1$ is connected. A \emph{vertex-separator} of a graph is a subset of vertices whose removal disconnects the graph.

We are now in a position to prove the main conjecture of~\cite{MeagherPartial}. Note that the condition of $n$ being sufficiently large stems from the proof of~\cite[Theorem 6.1]{MeagherPartial}, so we have made no attempt to sharpen our result to hold for all $n$.
\begin{theorem}\emph{\cite[Conjecture 6.1]{MeagherPartial}}\label{thm:partial}
	If $\mathcal{F} \subseteq \mathcal{M}_{kn}^k$ is a largest partially 2-intersecting family, then $\mathcal{F}$ is a canonically partially 2-intersecting family provided that $n$ is sufficiently large.
\end{theorem}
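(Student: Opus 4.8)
The plan is to run the spectral template of the earlier sections, with the ``boosting'' step modelled on the permutation proof but driven by Lemma~\ref{lem:cross}. By the ratio bound of~\cite{MeagherPartial} we may assume a largest partially $2$-intersecting $\mathcal{F}$ has $|\mathcal{F}| = T := \binom{kn-2}{k-2}|\mathcal{M}_{k(n-1)}^k|$ and $1_\mathcal{F}\in V_0\oplus V_1$; with $P := E_0+E_1$ this means $(P1_\mathcal{F})_m = (1_\mathcal{F})_m$ for all $m$. Since the graph of $A_1$ is connected, pick $m_1\in\mathcal{F}$ and $m_0\notin\mathcal{F}$ that are $1$-related, say $m_0=\tau m_1$ for a transposition $\tau=(a,b)$ with $a,b$ in distinct edges $e,f$ of $m_1$. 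Subtracting the expression of Proposition~\ref{prop:hyperE1} at $m_1$ and at $m_0$, the constant term vanishes and all terms indexed by pairs lying in an edge of both $m_1$ and $m_0$ cancel, leaving
\[
\sum_{c\in e\setminus a}\bigl(|\mathcal{F}\!\downarrow_{ac}| - |\mathcal{F}\!\downarrow_{bc}|\bigr) + \sum_{c\in f\setminus b}\bigl(|\mathcal{F}\!\downarrow_{bc}| - |\mathcal{F}\!\downarrow_{ac}|\bigr) = C,
\]
where $C := \frac{2(k-1)(n-1)|\mathcal{M}_{kn}^k|}{(2n+k-3)(kn-3)}$. Using $|\mathcal{M}_{kn}^k| = \binom{kn-1}{k-1}|\mathcal{M}_{k(n-1)}^k|$ one checks $C = T\bigl(1+O(1/n)\bigr)$. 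Dropping the $2(k-1)$ negative terms and averaging over the $2(k-1)$ positive ones gives a pair $ij$ (lying inside $e$ or $f$) with $|\mathcal{F}\!\downarrow_{ij}| \ge C/\bigl(2(k-1)\bigr) = \Omega(T)$.

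Next I would boost this to $|\mathcal{F}\!\downarrow_{ij}| = T - o(T)$. For every pair $uv\neq ij$, the subfamilies $\mathcal{F}\!\downarrow_{ij}$ and $\mathcal{F}\!\downarrow_{uv}$ of $\mathcal{F}$ are $ij$- and $uv$-centered and cross partially $2$-intersecting, so Lemma~\ref{lem:cross} gives $|\mathcal{F}\!\downarrow_{ij}|\,|\mathcal{F}\!\downarrow_{uv}| = o(T^2)$; combined with $|\mathcal{F}\!\downarrow_{ij}| = \Omega(T)$ this forces $|\mathcal{F}\!\downarrow_{uv}| = o(T)$ for all $uv\neq ij$. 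The $4(k-1)$ pairs occurring in the displayed identity are distinct and exactly one of them is $ij$, so every summand other than $|\mathcal{F}\!\downarrow_{ij}|$ is $o(T)$; hence $|\mathcal{F}\!\downarrow_{ij}| = C\pm o(T) = T - o(T)$, i.e.\ $|\mathcal{F}\setminus\mathcal{F}\!\downarrow_{ij}| = o(T)$.

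To conclude, suppose $\mathcal{F}\neq\mathcal{F}\!\downarrow_{ij}$ and fix $m_*\in\mathcal{F}\setminus\mathcal{F}\!\downarrow_{ij}$, so no edge of $m_*$ contains both $i$ and $j$. Every $m\in\mathcal{F}\!\downarrow_{ij}$ partially $2$-intersects $m_*$, so $\mathcal{F}\!\downarrow_{ij}$ is contained in the set of perfect hypermatchings that carry $ij$ inside an edge and partially $2$-intersect $m_*$. A counting argument in the spirit of Proposition~\ref{prop:count} should bound that set by $(1-\epsilon)T$ for an absolute constant $\epsilon=\epsilon(k)>0$ once $n$ is large: a $(1-o(1))$-fraction of the $\binom{kn-2}{k-2}$ choices for the edge through $ij$ meet every edge of $m_*$ in at most one vertex, and for any such choice a positive proportion of the completions are partially $2$-deranged from $m_*$ (a perfect-hypermatching analogue of the derangement count behind Proposition~\ref{prop:count}). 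This contradicts $|\mathcal{F}\!\downarrow_{ij}| = T - o(T)$, so $\mathcal{F} = \mathcal{F}\!\downarrow_{ij}\subseteq\mathcal{F}_{ij}$, and since $|\mathcal{F}| = |\mathcal{F}_{ij}| = T$ we conclude $\mathcal{F} = \mathcal{F}_{ij}$.

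The hard part is the quantitative boosting. The bare identity only gives $|\mathcal{F}\!\downarrow_{ij}| \gtrsim T/\bigl(2(k-1)\bigr)$, which for $k\ge 3$ is too small to contradict the ``$m$ must hit $m_*$'' bound of the last step (that bound saves only a fixed fraction of $T$, a fraction that need not exceed $1/(2(k-1))$). What rescues the argument is that Lemma~\ref{lem:cross} annihilates every $|\mathcal{F}\!\downarrow_{uv}|$ with $uv\neq ij$, so the same identity can be re-read to recover $|\mathcal{F}\!\downarrow_{ij}| = C - o(T)$ — and this is useful only because the explicit form of $E_1$ in Proposition~\ref{prop:hyperE1} forces $C$ to be asymptotic to the extremal value $T$ rather than a proper fraction of it. As the theorem already assumes, $n$ must be taken large both for the ratio-bound input from~\cite{MeagherPartial} and for the various $o(1)$ quantities and the constant $\epsilon$ above to cooperate.
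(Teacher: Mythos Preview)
Your opening matches the paper exactly: pick $1$-related $m_1\in\mathcal F$, $m_0\notin\mathcal F$, subtract the expressions from Proposition~\ref{prop:hyperE1}, and average to find a pair $ij$ with $|\mathcal F\!\!\downarrow_{ij}|=\Omega(T)$. From there your route diverges. Your boosting step --- applying Lemma~\ref{lem:cross} to $\mathcal F\!\!\downarrow_{ij}$ against every other $\mathcal F\!\!\downarrow_{uv}$ to force $|\mathcal F\!\!\downarrow_{uv}|=o(T)$, then rereading the identity to extract $|\mathcal F\!\!\downarrow_{ij}|=C\pm o(T)=T-o(T)$ --- is correct and is a nice twist the paper does not use.

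The gap is in your endgame. You assert that the set of $ij$-stars partially $2$-intersecting a fixed $m_*\notin\mathcal F_{ij}$ has size at most $(1-\epsilon)T$, but you do not prove it; you only sketch that a $(1-o(1))$-fraction of edges through $ij$ meet each edge of $m_*$ in at most one vertex, and then appeal to ``a perfect-hypermatching analogue of the derangement count.'' That second half is the real content: after removing such an edge $e$, what remains of $m_*$ is not a perfect hypermatching but a hypergraph with $k$ edges of size $k-1$ and $n-k$ edges of size $k$, and you need a uniform positive lower bound on the fraction of completions that are partially $2$-deranged from \emph{that}. This is plausible but is not Proposition~\ref{prop:count}, is not in the paper, and needs its own argument.

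The paper avoids any such count. Instead of boosting and then counting, it supposes $\mathcal F\setminus\mathcal F\!\!\downarrow_{ij}\neq\emptyset$ and runs the eigenvector identity a second time (Claim~2): either on a $1$-related pair $w_1\in\mathcal F\setminus\mathcal F\!\!\downarrow_{ij}$, $w_0\notin\mathcal F$ (giving a fresh $i'j'\neq ij$ with $|\mathcal F\!\!\downarrow_{i'j'}|=\Omega(T)$), or on a $1$-related pair $m\in\mathcal F\!\!\downarrow_{ij}$, $m'\in\mathcal F\setminus\mathcal F\!\!\downarrow_{ij}$ (where $P_m-P_{m'}=0$ forces $\sum_{Z_0}|\mathcal F\!\!\downarrow_{i'j'}|\geq|\mathcal F\!\!\downarrow_{ij}|$, again yielding such an $i'j'$). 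Either way one gets two distinct pairs with $\Omega(T)$ restrictions, and Lemma~\ref{lem:cross} gives the contradiction directly. Incidentally, your own boosting step already contains the contradiction this argument needs: once you know $|\mathcal F\!\!\downarrow_{uv}|=o(T)$ for all $uv\neq ij$, the Claim~2 output $|\mathcal F\!\!\downarrow_{i'j'}|=\Omega(T)$ is immediately impossible, so you could finish that way and drop the derangement count altogether.
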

\begin{proof}
Let $P = E_0 + E_1$ be the orthogonal projection onto $V_0 \oplus V_1$.
Since $A_1$ is connected, let $m_1 \in \mathcal{F}$ and $m_0 \notin \mathcal{F}$ be 1-related. 
Let $P_m := (P1_{\mathcal{F}})_m$. In the proof of~\cite[Theorem 6.1]{MeagherPartial} Meagher et al. showed that $P_m = (1_{\mathcal{F}})_m$, so by Proposition~\ref{prop:hyperE1} we have
\[
P_{m_1} - P_{m_0} = \left( \frac{(2n+k-3)(kn -3)}{2(k-1)(n-1)|\mathcal{M}_{kn}^k|} \right)\left[ \sum_{ij \subseteq e \in m_1} |\mathcal{F}\!\!\downarrow_{ij}|  - \sum_{ij \subseteq e \in m_0} |\mathcal{F}\!\!\downarrow_{ij}| \right] = 1.
\]
Let $S_1$ be the set of 2-sets contained in a $k$-edge of $m_1$, and similarly $S_0$ for $m_0$. Let $X_1 := S_1 \setminus S_0$ and $X_0 := S_0 \setminus S_1$. Note that $|X_0| = |X_1| = 2(k-1)$. We may write 
\[
	\sum_{ij \in X_1} |\mathcal{F}\!\!\downarrow_{ij}|  - \sum_{ij \in X_0} |\mathcal{F}\!\!\downarrow_{ij}|  = 	\left( \frac{2(k-1)(n-1)}{(2n+k-3)(kn -3)} \right)|\mathcal{M}_{kn}^k|.
\]
After dropping negative terms, averaging shows there exists a 2-set $ij \in X_1$ such that
\[
	|\mathcal{F}\!\!\downarrow_{ij}| \geq 	\left( \frac{(n-1)}{(2n+k-3)(kn -3)} \right)|\mathcal{M}_{kn}^k| = \Omega\left( \binom{kn-2}{k-2}|\mathcal{M}_{k(n-1)}^k| \right).
\]
Suppose that $\mathcal{F} \setminus \mathcal{F}\!\!\downarrow_{ij}$ is nonempty. Then we claim the following.\\

\noindent \textbf{Claim 2:} There exists a 2-set $i'j' \neq ij$ such that $|\mathcal{F}\!\!\downarrow_{i'j'}\!\!| = \Omega\left( \binom{kn-2}{k-2}|\mathcal{M}_{k(n-1)}^k| \right)$.
\begin{proof}[Proof of Claim 2]
Suppose there is no pair $m' \in \mathcal{F} \! \setminus \! \mathcal{F}\!\! \downarrow_{ij}$, $m \in \mathcal{F}\!\! \downarrow_{ij}$ that are 1-related, that is, $\mathcal{M}_{kn}^k \!\!\setminus \! \mathcal{F}$ forms a vertex-separator of $A_1$ that separates $\mathcal{F} \!\! \downarrow_{ij}$ from $\mathcal{F}\! \setminus \! \mathcal{F} \!\!\downarrow_{ij}$. Since $A_1$ is connected, there is another 1-related pair $w_1 \in \mathcal{F} \setminus \mathcal{F}\downarrow_{ij}$, $w_0 \notin \mathcal{F}$ such that $w_1 \neq m_1$. We may write
\[
\sum_{i'j' \in Y_1} |\mathcal{F}\!\!\downarrow_{i'j'} \!\! |  - \sum_{i'j' \in Y_0} |\mathcal{F}\!\!\downarrow_{i'j'}\!\!| = 	\left( \frac{2(k-1)(n-1)}{(2n+k-3)(kn -3)} \right)|\mathcal{M}_{kn}^k|
\]
where $Y_1$ is the collection of 2-sets that are contained in a $k$-edge of $w_1$ but not a $k$-edge of $w_0$, and vice versa for $Y_0$. Since $w_1 \in \mathcal{F} \setminus \mathcal{F} \downarrow_{ij}$, we have $ij \notin Y_1$, and so we may repeat the argument preceding the claim to obtain another 2-set $i'j' \neq ij$ such that
\[
|\mathcal{F}\!\!\downarrow_{i'j'} \!\!| = \Omega \left( \binom{kn-2}{k-2}|\mathcal{M}_{k(n-1)}^k| \right).
\]

Now suppose there exists a $m' \in \mathcal{F} \! \setminus \! \mathcal{F} \!\! \downarrow_{ij}$ and $m \in \mathcal{F}\!\!\downarrow_{ij}$ that are 1-related. Let $Z_1$ be the collection of 2-sets that are contained in a $k$-edge of $m$ but not a $k$-edge of $m'$, and vice versa for $Z_0$. Since $P_m - P_{m'} = 0$, Proposition~\ref{prop:hyperE1} implies that
\[
\sum_{i'j' \in Z_1} |\mathcal{F}\!\!\downarrow_{i'j'} \!\! |   = \sum_{i'j' \in Z_0} |\mathcal{F}\!\!\downarrow_{i'j'} \!\! |. 
\]
Since $ij \in Z_1$, we have $$
	| \mathcal{F} \!\! \downarrow_{ij} \!\!| \leq  \sum_{i'j' \in Z_0} |\mathcal{F}\!\!\downarrow_{i'j'}\!\!|.$$ 
Since $ij \notin Z_0$, averaging shows there exists a 2-set $i'j' \neq ij$ such that 
\[
	\frac{| \mathcal{F}\!\! \downarrow_{ij}\!\! |}{2(k-1)} \leq |\mathcal{F}\!\!\downarrow_{i'j'}\!\! | = \Omega \left( \binom{kn-2}{k-2}|\mathcal{M}_{k(n-1)}^k| \right).
\]
This completes the proof of the claim.
\end{proof}
\noindent Since $\mathcal{F}$ is partially 2-intersecting, $\mathcal{F} \!\! \downarrow_{ij}$, $\mathcal{F}\!\! \downarrow_{i'j'}$ are cross partially 2-intersecting, thus
$$|\mathcal{F} \!\! \downarrow_{ij} \!\! | \cdot |\mathcal{F}\!\! \downarrow_{i'j'}\!\! | = \Omega \left( \left( \binom{kn-2}{k-2}|\mathcal{M}_{k(n-1)}^k| \right)^2 \right),$$ 
which contradicts Lemma~\ref{lem:cross}. We deduce that $\mathcal{F}$ is a largest partially 2-intersecting family that is centered, i.e., $\mathcal{F}$ is a canonically partially 2-intersecting family.
\end{proof}
\noindent We now discuss why the algebraic techniques for proving uniqueness mentioned in the introduction seem ill-suited for partially 2-intersecting families. Let $M$ be the $ |\mathcal{M}_{kn}^k| \times \binom{kn}{2} $ matrix whose columns are the characteristic vectors of the canonically partially 2-intersecting families of $\mathcal{M}_{kn}^k$. Label the columns of $M$ with edges $ij \in E(K_{kn})$ so that each row of $M$ is the characteristic vector $1_F \in \mathbb{R}^{E(K_{kn})}$ of a \emph{$K_k$-factor} $F$ of $K_{kn}$, i.e., a set $F \subseteq E(K_{kn})$ whose edges form $n$ vertex-disjoint $k$-cliques that cover the vertices of $K_{kn}$. Let $c_G \in \mathbb{R}^{E(K_{kn})}$ be the characteristic vector of a $kn$-vertex graph $G$, so that
$$ \max c_G^\top x \quad \text{subject to } x \in \text{conv.hull}(rows(M)) =: P \quad \leq \quad n \cdot k(k-1)/2,$$
equality holding if and only if $G$ has a $K_k$-factor. However, for $k \geq 3$, deciding if a graph has a $K_k$-factor is NP-complete~\cite{HellK83}. In light of this, it is unlikely that good descriptions of the facet-inducing inequalities of $P$ exist, which seems necessary to get started using the polyhedral method. As for dual width, it is not even clear what this means in the context of homoegeneous coherent configurations. Finally, it might be possible to apply the rank method to $M$, but a shorter proof along these lines seems unlikely, even for $k=2$.

\section{Conclusion and Open Questions}

The proof technique in this work can be seen as a \emph{primal-dual method}: the primal eigenvalues $P_d(i)$ of the top associate $A_d$ give the upper bound whereas the dual eigenvalues $Q_1(j)$ of least non-trivial idempotent $E_1$ characterize the case of equality. However, in Section~\ref{sec:hypermatchings}, Meagher et al.'s proof of the bound uses the adjacency matrix of the partial 2-derangement graph (a union of orbitals) rather than a single ``top orbital". It would be interesting to see if a strengthening of their main result holds by showing there exists a single orbital  $A_{\boldsymbol{\mu}}$ with $\text{fp}_{2/k}(\boldsymbol{\mu}) = 0$ that meets the ratio bound with equality.

We have seen that this method works well for combinatorial objects; however, some of the $q$-analogues of these objects correspond to structured matrices with entries in $\mathbb{F}_q$ that seem less receptive to the method. For example, we were only able to prove the EKR theorem for bilinear forms provided that $m \leq \lceil n/2 \rceil$, which is surprising because the bilinear forms association scheme has an algebraic structure quite similar to the Johnson and Grassmann schemes. The situation is even worse for matrix groups such as $\{\mathit{GL}(n,q)\}_{n=1}^\infty$, but here we are able to offer a somewhat formal explanation as to why our techniques fall short.

In~\cite{DafniFLLV21}, the \emph{chunk size} $\mathfrak{c}$ of a domain $\mathcal{X}$ is defined to be the following quantity:
$$\mathfrak{c} := \min_{\substack{ x,y \in \mathcal{X} \\ x \neq y} } |x \setminus y|.$$
For example, $\mathfrak{c} = 1$ if $\mathcal{X}$ is the collection of $k$-element subsets of $[n]$ or words of $[q]^n$, $\mathfrak{c} = 2$ if $\mathcal{X}$ is the symmetric group or the set of perfect matchings, and $\mathfrak{c} = 3$ if $\mathcal{X}$ is the alternating group. However, there are natural choices of group domains that have much larger chunk size. Take for example $\mathcal{X} = \mathit{GL}(n,q)$ where we represent the invertible linear map $g \in \mathit{GL}(n,q)$ naturally as a subset of ordered pairs $\Sigma = V \times V$ where $V$ is the set of 1-dimensional subspaces of $\mathbb{F}_q^n$. Note that this is just the standard representation of $\mathit{GL}(n,q)$ as a group of $[n]_q \times [n]_q$ permutation matrices. It is not difficult to see that $\mathfrak{c} = \Theta([n]_q)$ for $\mathit{GL}(n,q)$, $\mathit{SL}(n,q)$, and their projective versions. This poses a problem for the presented proof method due to the fact that there is no element of any of these groups that moves $O(1)$ points of $\Sigma$. More generally, the \emph{minimal degree} $\mu(G_n)$ of a permutation group $G_n$ acting on $n$ points is defined to be the smallest number of moved points of a non-identity element of $G_n$. One of the classical problems in permutation groups is to classify the permutation groups whose minimal degree is ``small". There are many results that show if the minimal degree is small, then the group contains the alternating group $A_n$, e.g., if $A_n \not \leq G_n$, then $\mu(G_n) \geq (\sqrt{n}-1)/2$ (see~\cite{Babai81}). Symmetric groups, and more generally Coxeter groups, are important infinite families that are not ruled out by this result, but generally speaking, there seem to be relatively few natural families of finite groups with a large alternating subgroup.

Finally, it would be interesting to see if these primal-dual methods can provide shorter uniqueness proofs for \emph{t-intersecting} EKR results, where two elements are said to \emph{$t$-intersect} if they have $t$ or more atoms in common.  For cometric association schemes, width methods have been used to characterize maximum $t$-intersecting families, and for the Johnson scheme a polyhedral proof is known (see~\cite{GodsilMeagher}). Using our methods, we have only been able to obtain unsatisfactory results for $k$-sets of $[n]$ for constant $k,t$ and $n \gg (t+1)(k-t-1)$. We leave it as an open question whether these techniques can be pushed further to give sharper results.

\bibliographystyle{plain}
\bibliography{../research/master.bib}

\begin{thebibliography}{10}

\bibitem{Babai81}
L{\'a}szl{\'o} Babai.
\newblock On the order of uniprimitive permutation groups.
\newblock {\em Annals of Mathematics}, 113:553, 1981.

\bibitem{BannaiI84}
E.~Bannai and T.~Ito.
\newblock {\em Algebraic {C}ombinatorics I: {A}ssociation {S}chemes}.
\newblock Mathematics lecture note series. Benjamin/Cummings Pub. Co., 1984.

\bibitem{BlokhuisBCFMPS10}
Aart Blokhuis, Andries~E. Brouwer, Ameera Chowdhury, Peter Frankl, T.~Mussche,
  Bal{\'{a}}zs Patk{\'{o}}s, and Tam{\'{a}}s Sz{\"{o}}nyi.
\newblock A hilton-milner theorem for vector spaces.
\newblock {\em Electron. J. Comb.}, 17(1), 2010.

\bibitem{CameronK03}
Peter~J. Cameron and C.Y. Ku.
\newblock Intersecting families of permutations.
\newblock {\em European Journal of Combinatorics}, 24(7):881 -- 890, 2003.

\bibitem{CST}
T.~Ceccherini-Silberstein, F.~Scarabotti, and F.~Tolli.
\newblock {\em Harmonic Analysis on Finite Groups: Representation Theory,
  Gelfand Pairs and Markov Chains}.
\newblock Cambridge Studies in Advanced Mathematics. Cambridge University
  Press, 2008.

\bibitem{DafniFLLV21}
Neta Dafni, Yuval Filmus, Noam Lifshitz, Nathan Lindzey, and Marc Vinyals.
\newblock Complexity measures on the symmetric group and beyond (extended
  abstract).
\newblock In James~R. Lee, editor, {\em 12th Innovations in Theoretical
  Computer Science Conference, {ITCS} 2021, January 6-8, 2021, Virtual
  Conference}, volume 185 of {\em LIPIcs}, pages 87:1--87:5. Schloss Dagstuhl -
  Leibniz-Zentrum f{\"{u}}r Informatik, 2021.

\bibitem{Delsarte73}
P.~Delsarte.
\newblock {\em An Algebraic Approach to the Association Schemes of Coding
  Theory}.
\newblock Philips research reports: Supplements. N.V. Philips'
  Gloeilampenfabrieken, 1973.

\bibitem{Delsarte78}
Ph~Delsarte.
\newblock Bilinear forms over a finite field, with applications to coding
  theory.
\newblock {\em Journal of Combinatorial Theory, Series A}, 25(3):226--241,
  1978.

\bibitem{EllisFP11}
D.~Ellis, E.~Friedgut, and H.~Pilpel.
\newblock Intersecting families of permutations.
\newblock {\em J. Amer. Math. Soc.}, 24:649--682, 2011.

\bibitem{Ellis11}
David Ellis.
\newblock {S}tability for $t$-intersecting families of permutations.
\newblock {\em Journal of Combinatorial Theory, Series A}, 118(1):208 -- 227,
  2011.

\bibitem{Ellis12}
David Ellis.
\newblock A proof of the {C}ameron-{K}u conjecture.
\newblock {\em J. London Math. Society}, 85(1):165--190, 2012.

\bibitem{EllisKL16}
David Ellis, Nathan Keller, and Noam Lifshitz.
\newblock Stability versions of {E}rd{\H{o}}s–{K}o–{R}ado type theorems via
  isoperimetry.
\newblock {\em Journal of the European Mathematical Society}, 2019.

\bibitem{ErdosKR61}
P.~Erd{\H{o}}s, Chao Ko, and R.~Rado.
\newblock Intersection theorems for systems of finite sets.
\newblock {\em The Quarterly Journal of Mathematics}, 12(1):313--320, 1961.

\bibitem{Filmus17}
Yuval Filmus.
\newblock A comment on intersecting families of permutations.
\newblock {\em CoRR}, arXiv:1706.10146, 2017.

\bibitem{FranklW86}
P.~Frankl and R.M. Wilson.
\newblock The {E}rd{\H{o}}s--{K}o--{R}ado theorem for vector spaces.
\newblock {\em Journal of Combinatorial Theory, Series A}, 43(2):228 -- 236,
  1986.

\bibitem{GodsilMeagher}
C.~Godsil and K.~Meagher.
\newblock {\em Erd{\H{o}}s--Ko--Rado Theorems: Algebraic Approaches}.
\newblock Cambridge Studies in Advanced Mathematics. Cambridge University
  Press, 2015.

\bibitem{GodsilRoyle}
Chris Godsil and Gordon Royle.
\newblock {\em Algebraic {G}raph {T}heory}, volume 207 of {\em Graduate Texts
  in Mathematics}.
\newblock Springer-Verlag, New York, 2001.

\bibitem{GodsilM09}
Chris~D. Godsil and Karen Meagher.
\newblock A new proof of the {E}rd{\H{o}}s--{K}o--{R}ado theorem for
  intersecting families of permutations.
\newblock {\em Eur. J. Comb.}, 30(2):404--414, 2009.

\bibitem{GongLW17}
Chao Gong, Benjian Lv, and Kaishun Wang.
\newblock The {H}ilton–{M}ilner theorem for the distance-regular graphs of
  bilinear forms.
\newblock {\em Linear Algebra and its Applications}, 515(C):130--144, 2017.

\bibitem{Higman75}
D.G. Higman.
\newblock Coherent configurations.
\newblock {\em Geometriae Dedicata}, 4(1):1--32, 1975.

\bibitem{Huang87}
Tayuan Huang.
\newblock An analogue of the {E}rd{\H{o}}s--{K}o--{R}ado theorem for the
  distance-regular graphs of bilinear forms.
\newblock {\em Discrete Mathematics}, 64(2):191--198, 1987.

\bibitem{JamesKerber}
G.D. James and A.~Kerber.
\newblock {\em The Representation Theory of the Symmetric Group}.
\newblock Encyclopedia of Mathematics and its Applications. Cambridge
  University Press, 1984.

\bibitem{HellK83}
D.~G. Kirkpatrick and P.~Hell.
\newblock On the complexity of general graph factor problems.
\newblock {\em SIAM Journal on Computing}, 12(3):601--609, 1983.

\bibitem{Lindzey17}
Nathan Lindzey.
\newblock Erd{\"o}s--{K}o--{R}ado for perfect matchings.
\newblock {\em European Journal of Combinatorics}, 65:130 -- 142, 2017.

\bibitem{Lindzey20}
Nathan Lindzey.
\newblock Stability for 1-intersecting families of perfect matchings.
\newblock {\em European Journal of Combinatorics}, 86:103091, 2020.

\bibitem{Lovasz79}
L.~Lov\'asz.
\newblock On the {S}hannon capacity of a graph.
\newblock {\em IEEE Trans. Inf. Theor.}, 25(1):1--7, January 1979.

\bibitem{MacDonald95}
I.G. Macdonald.
\newblock {\em Symmetric {F}unctions and Hall {P}olynomials}.
\newblock Oxford mathematical monographs. Clarendon Press, 1995.

\bibitem{MeagherPartial}
Karen Meagher, Mahsa~N. Shirazi, and Brett Stevens.
\newblock An extension of the {E}rd{\H{o}}s-{K}o-{R}ado theorem to uniform set
  partitions (arxiv pre-print), 2021.

\bibitem{Moon82}
Aeryung Moon.
\newblock An analogue of the {E}rd{\H{o}}s--{K}o--{R}ado theorem for the
  {H}amming schemes ${H}(n, q)$.
\newblock {\em J. Comb. Theory, Ser. {A}}, 32(3):386--390, 1982.

\bibitem{NewmanPhD}
Michael Newman.
\newblock {\em Independent Sets and Eigenspaces}.
\newblock PhD thesis, University of Waterloo, 2004.

\bibitem{Renteln07}
Paul Renteln.
\newblock On the spectrum of the derangement graph.
\newblock {\em The Electronic Journal of Combinatorics}, 14(1):Research Paper
  82, 17 pp. (electronic), 2007.

\bibitem{RooneyPhD}
Brendan Rooney.
\newblock {\em Spectral Aspects of Cocliques in Graphs}.
\newblock PhD thesis, University of Waterloo, 2014.

\bibitem{Sagan}
B.~Sagan.
\newblock {\em The Symmetric Group: Representations, Combinatorial Algorithms,
  and Symmetric Functions}.
\newblock Graduate Texts in Mathematics. Springer New York, 2001.

\bibitem{Spiga19}
Pablo Spiga.
\newblock The {E}rd{\H{o}}s--{K}o--{R}ado theorem for the derangement graph of
  the projective general linear group acting on the projective space.
\newblock {\em Journal of Combinatorial Theory, Series A}, 166:59--90, 2019.

\bibitem{Weintraub90}
Steven~H Weintraub.
\newblock Some observations on plethysms.
\newblock {\em Journal of Algebra}, 129(1):103--114, 1990.

\end{thebibliography}

\end{document}